\newcommand{\cmark}{\ding{51}} 
\newcommand{\xmark}{\ding{55}} 
\def\hL{\hat{L}}
\def\b1{\mathbf 1}
\def\bbx{\bar{\mathbf x}}
\def\bby{\bar{\mathbf y}}
\def\tbbx{\bar{\widetilde{\mathbf x}}}
\def\bbv{\bar{\mathbf v}}
\def\bz{\mathbf z}
\def\bv{\mathbf v}
\def\tbx{\widetilde{\mathbf x}}
\def\tby{\widetilde{\mathbf y}}
\def\bx{\mathbf x}
\def\by{\mathbf y}
\def\bu{\mathbf u}
\def\bw{\mathbf w}
\def\bA{\mathbf A}
\def\bL{\mathbf L}
\def\bI{\mathbf I}
\def\bW{\mathbf W}
\def\bphi{\boldsymbol \phi}
\def\cL{\mathcal{L}}
\newtheorem{lemma}{Lemma}
\newtheorem{claim}{Claim}
\newtheorem{theorem}{Theorem}
\newtheorem{assumption}{Assumption}
\newtheorem{Corollary}{Corollary}
\def\remark{\addtocounter{remark}{1}\def\@currentlabel{\theremark}%
	\emph{Remark~\theremark}. } \makeatother
\newcounter{remark}
\title{\LARGE On the Divergence of Decentralized Non-Convex Optimization}
\author{Mingyi Hong$^\dag$, Siliang Zeng$^\star$, Junyu Zhang$\dag\dag$, and Haoran Sun$^\dag$ \thanks{
	$^\dag$ Department of ECE, University of Minnesota Twin Cities, Minneapolis, MN USA
$^\star$ School of Science and Engineering, Chinese University of Hong Kong, Shenzhen.
$^{\dag\dag}$ Department of Industrial and Systems Engineering, University of Minnesota Twin Cities, Minneapolis, MN, USA. MH, HS are supported in part by NSF under Grant CMMI-172775, CIF-1910385 and by AFOSR under grant 19RT0424, ARO under grant W911NF-19-1-0247.}}
\begin{document}

\maketitle

\begin{abstract}%
  In this work, we study a generic class of decentralized algorithms in which $N$ agents jointly optimize the non-convex objective function $f(\bu):=1/N\sum_{i=1}^{N}f_i(\bu)$, while only communicating with their neighbors. This class of problems has become popular in modeling many signal processing and decentralized machine learning applications, and efficient algorithms have been proposed for such a type of problem. However, by constructing a series of counter-examples, we show that when certain {\it local} Lipschitz conditions (LLC) on the local function gradient $\nabla f_i$'s are not satisfied, most of the existing decentralized algorithms diverge, even if the global Lipschitz condition (GLC) is satisfied, where the sum function $f$ has Lipschitz gradient.  This observation brings out a fundamental theoretical issue of the existing decentralized algorithms -- their convergence conditions are {\it strictly stronger} than centralized algorithms such as the gradient descent (GD), which only requires the GLC. 
  Additionally, this observation raises an important open question: How to design decentralized algorithms when the LLC, or even the GLC, is not satisfied? 
  
  To address the above question, we design a first-order algorithm called {\it \underline{M}ulti-st\underline{a}ge \underline{g}radi\underline{en}t \underline{t}r{a}cking \underline{a}lgorithm} (MAGENTA), 
  which is capable of computing stationary solutions of the original problem with neither the LLC nor the GLC condition. {In particular, we show that the proposed algorithm converges sublinearly to certain $\epsilon$-stationary solution, where the precise rate depends on various algorithmic and problem parameters. In particular, if the local function $f_i$'s are $Q$th order polynomials, then the rate becomes $\mathcal{O}(1/\epsilon^{Q-1})$. Such a rate is {\it tight} for the special case of $Q=2$ where each $f_i$ satisfies LLC.} To our knowledge, this is the first attempt that studies decentralized non-convex optimization problems with neither the LLC nor the GLC.
\end{abstract}

\section{Introduction}
Decentralized optimization has received significant attention from the research community. It has become the power horse not only in the traditional signal processing applications, but also in modern applications such as decentralized machine learning, and training neural networks \cite{lian2017can, nedic2009distributed, chen2012diffusion, yuan2016convergence}. 

Despite the fact that much research has been done in this area, there is still a significant lack of understanding about the behavior and performance of various kinds of decentralized algorithms, especially when the problem under consideration is {\it non-convex}.  Compared with {\it centralized} algorithms in which all the problem data is located at the same place,  decentralized algorithm has the limitation that each computation node only has a {\it local view} about the entire problem. Therefore, compared with centralized schemes, it is much more difficult to properly design and analyze decentralized algorithms. In particular, compared to centralized algorithms, decentralized algorithms require much stronger assumptions to guarantee the convergence.   For example, consider the following standard  formulation for decentralized optimization:
\begin{align}\label{eq:problem:central}
\min \; \frac{1}{N}\sum_{i=1}^{N} f_i(\bu): = f(\bu)
\end{align}
where each $f_i: \mathbb{R}^K\to \mathbb{R}$ is a continuously differentiable function that can only be accessed by the local agent $i$. 
Assume there are $N$ agents in the system, where each agent $i$ has access to the {\it local function} $f_i(\cdot)$. The above problem can then be rewritten equivalently in the following form:
\begin{align}\label{eq:problem:main}
\min\; \frac{1}{N}\sum_{i=1}^{N} f_i(\bx_i) : = g(\bx), \quad \mbox{s.t.}~{\bx_i = \bx_j}, \; \mbox{\rm if~$(i,j)$~ are neighbors},
\end{align}
where $\bx:=[\bx_1; \cdots, \bx_k]\in\mathbb{R}^{NK}$. 
One typical assumption made for (almost) all decentralized algorithms is that, each component function $f_i(\cdot)$ has the Lipschitz gradient. That is, the following Local Lipschitz Condition (LLC) is satisfied:
\begin{align}\label{eq:Lip:distributed}
\|\nabla f_i(\bv_i)- \nabla f_i(\bw_i)\|\le L_i \|\bv_i-\bw_i\|, \quad \forall~\bv_i, \bw_i \in \mbox{dom}(f_i), \; \forall~i. \quad \mbox{\rm\bf(LLC)}
\end{align}
On the other hand, if we view problem \eqref{eq:problem:central} as a centralized problem, then the vanilla gradient descent (GD) algorithm can compute first-order stationary solutions, by just assuming the following Global Lipschitz Condition (GLC)  for the {\it sum} function $f(\bx)$:
\begin{align}\label{eq:Lip:central}
\|\nabla f(\by)- \nabla f(\bz)\|\le L\|\by-\bz\|, \quad \forall~\bz, \by \in \mbox{dom}(f). \quad \mbox{\rm\bf(GLC)}
\end{align}
It can be shown that LLC is stronger, and in fact in many situations {\it strictly} so, than the GLC \eqref{eq:Lip:central}; see Sec. \ref{sec:global_local} for detailed discussions. For example, let $N=3$,  and the local functions are non-convex, given  $f_1(x_1)= y^3,  f_2(x_1) = -y^3, f_3(x_3) = y^2$, and let $f(u)=\frac{1}{3}(f_1(u)+f_2(y)+f_3(u))$, then it is easy to see that GLC is satisfied but the LLC is not. 

The above discussion leads to a number of open research questions. First, it appears that a problem that can be handled perfectly by a {\it centralized} algorithm {\it may} turn out to be extremely challenging for decentralized algorithms, because the {\it strong} assumption of LLC can fail to hold. It is unclear if such a theoretical gap between centralized and decentralized algorithms is fundamental, or it can be addressed by better algorithm design and/or analysis. 
Second, in many learning problems,  the local and/or global Lipschitz constants may not even exist over the entire domain $\mathbb{R}^{K}$.  For example, it is well-known that the cost functions of matrix factorization type problems, or multi-layer neural networks, do not have global Lipschitz gradient \cite{li2019provable,zhang2019gradient-clipping}. In particular, it is shown in  \cite[Sec. H]{zhang2019gradient-clipping}, that for training AWD-LSTM neural networks,  the gradient Lipschitz constants vary greatly across different training iterations , and they are not bounded. 
However, there has been no existing decentralized optimization algorithms that can effectively deal with these practical situations. 

\noindent{\bf Contributions.} The main contributions of this work are given below:

\noindent{\bf (1)} We revisit convergence guarantees for a number of popular decentralized first-order algorithms (which utilize local first-order oracle for computation, and use certain double-stochastic weight matrix to perform local communication) for distributed non-convex optimization, and identify that the LLC is not only sufficient, but also {\it necessary} for their global convergence; 

\noindent{\bf (2)} We design a new distributed algorithm 
that requires neither LLC nor GLC, while being able to achieve global sublinear convergence to some $\epsilon$-stationary solutions;

\noindent{\bf (3)} We analyze the convergence rates (to certain properly defined $\epsilon$-stationarity solution), and identify its dependency on certain ``growth rate" of the local gradients as well as a number of algorithmic parameters; Additionally, we show that when $f_i$'s belong to a class of non-convex $Q$th order polynomial functions (which include various types of matrix factorization problems as special cases), the convergence rate  is $\mathcal{O}(1/\epsilon^{Q-1})$, and such a rate reduces to the known result when $Q=2$, in which case the LLC and GLC both hold true.

Overall, we hope that our study will reveal some  insights about the state-of-the-art distributed algorithms, especially when they are applied to non-convex problems. Further, we expect that the proposed algorithms and the associated analysis will be of independent interest to the research community, because they can be potentially adopted by different types of first-order methods (e.g., decentralized, deterministic, or stochastic methods) to better deal with the lack of Lipschitz gradients. To the best of our knowledge, this work provides the first algorithm, and the sharpest rate available, to deal with the decentralized problem \eqref{eq:problem:main} with neither the GLC nor the LLC assumption.

\noindent{\bf Notations.} We use $\mathbb{B}(v, \bx)$ to denote a ball centered at $\bx$ and has radius $v$; For a given compact set $X$ and a given point, we use $\mbox{dist}(\bx, X)$ to denote the minimum distance between $\bx$ and any point $\bz$ in $X$, that is: 
$$\mbox{dist}(\bx, X): = \min_{\bz\in X}\|\bx-\bz\|,$$
where $\|\cdot\|$ denotes the vector $\ell_2$ norm. The notation $\b1$ is used to denote an all one vector, and  for a vector $\bx:=[\bx_1;\cdots; \bx_N]\in\mathbb{R}^{NK}$, the notation $\bar{\bx}\in\mathbb{R}^K$ is used to denote the average of the components, that is $\bar{\bx}:=\frac{1}{N}\sum_{i=1}^{N}\bx_i$.  For a given matrix $X$ we use $\lambda{X}$ to denote its maximum eigenvalue.

\section{State-of-the-art Algorithms and Convergence Conditions}\label{sec:review}
In this section, we set the stage by reviewing a number of representative state-of-the-art algorithms for distributed non-convex optimization, and discuss their convergence conditions. Note that due to space limitation, we can only discuss a small subset of existing distributed algorithms. The readers are referred to a recent issue of IEEE Signal Processing Magazine for more in-depth discussion of many other related algorithms \cite{distributedSPM}. For  simplicity of notation, we will assume that $K=1$, that is, the optimization variable $\bu$  in \eqref{eq:problem:central} is a scalar. 

Our focus will be given to the setting where the agents form an undirected graph $G = ({\cal V}, {\cal E})$, with  ${\cal V}$ being the set of agents, and ${\cal E}$ representing the communication pattern among them. 
The \emph{graph incidence matrix} $\bA \in \mathbb{R}^{|{\cal{E}}| \times N}$ has $A_{ei}=1$, $A_{ej}=-1$ if $j>i$, $e=(i,j)\in \cal{E}$, and $A_{ek} = 0$ for all $k \in {\cal V} \setminus \{i,j\}$. Note that $\bA^\top \bA: =\bL_G \in \mathbb{R}^{N \times N}$ is the \emph{graph Laplacian} matrix. 
Lastly, a symmetric \emph{mixing matrix} ${\bm W}$ satisfies the following conditions:
\begin{align}\label{eq:W}
\mathrm{Range}\{\bI_N -\bW\}=\text{{\rm span}} \{\mathbf{1}\};~  -\bI_N \preceq \bW \preceq  \bI_N;~W_{ij}=0~\text{if}~(i,j)\notin {\cal{E}}, W_{ij}>0 \; \;\text{otherwise}. 
\end{align}

Let us briefly describe a few typical assumptions needed to ensure convergence of decentralized algorithms. Besides the GLC and LLC in \eqref{eq:Lip:central} and \eqref{eq:Lip:distributed}, the following lower boundedness assumption is often needed. 
Specifically, it is typical to assume that either the local function $f_i(\bx_i)$'s are lower bounded, or the average function $f(\bu)$ is lower bounded by a finite number $\underline{f}$: 
	\begin{align} \label{eq:lower:bound}
	f_i(\bx_i) > \underline{f}, \; \forall~\bx_i \in \mathbb{R}^K,\;  \mbox{\rm\bf(LLB)},\quad 
	f(\bu) > \underline{f}, \; \forall ~\bu \in \mathbb{R}^K, \; \mbox{\rm\bf(GLB)}.
	\end{align}

Next, we review a few state-of-the-art distributed non-convex algorithms to solve problem \eqref{eq:problem:main}. Our focus will be given to {\it deterministic} algorithms in which full local gradients are utilized when performing local computations. 

 First, the Distributed Gradient Descent (DGD) algorithm, expressed below, is a simple and popular algorithm for both convex and non-convex problems (where $g(\cdot)$ is defined in \eqref{eq:problem:main}) 	\begin{align}\label{eq:dgd}
	\bx^{r+1} =  \bW \bx^r - \alpha^r \nabla g(\bx^r).
	\end{align}
	It is originally proposed by \cite{nedic2009distributed} for convex problems, and recently analyzed by \cite{zeng2018nonconvex} for non-convex problems. To guarantee convergence of DGD to a stationary solution, it needs the (LLB) condition, and that the stepsize satisfies (\cite[Theorem 2]{Zeng19distributedGD}):
	\begin{align}\label{eq:stepsize}
	\sum_{r=1}^{\infty}\alpha^r = \infty, \; \sum_{r=1}^{\infty}(\alpha^r)^2 < \infty, \; \alpha^r \neq 0.
	\end{align}

A closely related family of algorithms are called the Gradient Tracking (GT) algorithms \cite{nedic2017achieving,di2016next, gnsd19}, which uses constant stepsizes. Specifically, the updates of GT is given below, where $\by^r$ is a sequence that tracks $\nabla g(\bx)$: 
	\begin{align*}
	\bx^{r+1} &=\bW  \bx^r -\alpha \by^r,\quad \by^{r+1} =\bW \by^r + \nabla g(\bx^{r+1}) - \nabla g(\bx^{r}).
	\end{align*}
	Note that the above updates can also be simplified as following:
	\begin{align}\label{eq:gt}
	\bx^{r+1} &=2\bW  \bx^{r} - \bW^2 \bx^{r-1} - \alpha \nabla g(\bx^{r}) + \alpha \nabla g(\bx^{r-1}),
	\end{align}
	which shows that GT shares some similarity to the EXTRA algorithm \cite{shi2015extra} developed for convex problems; see discussion in \cite[Section 2.2.1]{nedic2017achieving}. 
	
	 Another family of algorithm is designed by using the Primal-Dual (PD) strategy. 
	 According to the definitions of the \emph{graph incidence matrix} $\bA$, we can observe that the  consensus constraint  in problem \eqref{eq:problem:main} can be reformulated as a set of linear equalities, i.e.,
	$$\bx_i = \bx_j, \; \mbox{\rm if~$(i,j)$~ are neighbors},  \Leftrightarrow \bA\bx = 0.$$
	Therefore, we can derive the following Augmented Lagrangian from the problem \eqref{eq:problem:main}:
	\begin{align}\label{proxPDA_AL_function}
	\cL(\bx, \lambda) = g(\bx) + \langle \lambda, \bA\bx \rangle + \frac{\rho}{2} \|\bA\bx\|^2
	\end{align}
	where $\lambda \in  \mathbb{R}^{E}$ is the dual variable of the constraint $\bA\bx = 0$, and $\rho >0$ is a penalty parameter. Then the proximal primal-dual algorithm (Prox-PDA) proposed in \cite{hong2017prox}  performs a primal step which minimizes a linearized version of \eqref{proxPDA_AL_function}, and then a dual gradient ascent step:
	\begin{subequations}\label{eq:pd:iteration}
		\begin{align}
		\bx^{r+1}& =\arg\min_{\bx}\;  \langle   \nabla_{\bx}\cL(\bx^r,\lambda^r), \bx-\bx^r\rangle   + \left(\frac{\beta}{2} +  \frac{\rho \lambda_{\max}(\bA^T \bA)}{2} \right) \|\bx-\bx^r\|^2 \label{eq:pd:iteration1} \\
		\lambda^{r+1}& = \lambda^r +\rho \bA \bx^{r+1} \label{eq:pd:iteration2}.
		\end{align}
	\end{subequations}
	By subtracting the two consecutive updates of $\bx$, one can cancel the $\lambda^r$ sequence, and simplify the Prox-PDA algorithm as 
	\begin{align}\label{eq:pd}
	\bx^{r+1}    = \left(\bI_N -\frac{\rho \bA^T \bA   }{  \beta +   \rho \lambda_{\max}(\bA^T \bA)  } \right) \left(2\bx^r	  -\bx^{r-1} \right) -\frac{1}{\beta +   \rho \lambda_{\max}(\bA^T \bA)}(\nabla g(\bx^r)-\nabla g(\bx^{r-1})).
	\end{align}

Other related algorithms include different types of diffusion based schemes, which 
follow the adapt-then-combine (ATC) scheme \cite{lopes2008diffusion,cattivelli2009diffusion}. Instead of first performing the consensus then the gradient descent step, the ATC based methods first perform the gradient descent steps, and such a strategy leads to a number of variations of DGD and GT \cite{bianchi2013performance,sayed2014adaptation,pu2018distributed,sun2019improving}. For more detailed survey of existing algorithms, we refer the readers to \cite{chang2020distributed}. In Table \ref{fig:table_compare}, we summarize a number of state-of-the-art algorithms for distributed non-convex optimization, their convergence conditions and convergence rates \footnote{Note that we have also included a number of stochastic algorithms, such as $D^2$, and the algorithm proposed in \cite{bianchi2013convergence}. However, the main focus of this paper will be on the deterministic algorithms.}. Note that the convergence rate here refers to the number of iterations required to achieve certain $\epsilon$-stationary solutions (to be  defined shortly in subsequent sections). For the proposed algorithms, their convergence rates are dependent on certain growth function $\mbox{gr}(\cdot)$ which characterizes how fast the local/global Lipschitz constants grow with the size of the feasible set, and this function does not have an explicit expression for generic functions. See Sec. \ref{sub:convergence:1} for discussion. 

 \begin{table}[t]
	\vspace{-2px}
	\caption{Comparison of algorithms on  decentralized non-convex optimization}
	\label{fig:table_compare}
	\footnotesize
	\begin{center}
		\begin{sc}
			\begin{tabular}{lccccccccc}
				\toprule
				Algorithm  & LLC & GLC & LLB & GLB  &stepsize & convergence rate\\
				\midrule
				 $[$Tsitsiklis-86$]$ \cite{tsitsiklis86} & \cmark & \cmark & \cmark & \cmark & dim. & N/A\\
				$[$Bianchi-13$]$\cite{bianchi2013convergence} &\cmark  & \cmark & \cmark & \cmark&dim.  & N/A \\
				NEXT \cite{di2016next} & \cmark  & \cmark & \xmark & \cmark &dim.   & N/A \\
				DGD \cite{zeng2018nonconvex} & \cmark  & \cmark & \cmark & \cmark & dim.   & $\mathcal{O}(\epsilon^{-2})$\\
				DeFW \cite{wai2017decentralized} & \cmark  & \cmark & \cmark & \cmark & dim.   & $\mathcal{O}(\epsilon^{-2})$\\
				D$^2$ \cite{tang2018d} & \cmark  & \cmark & \cmark & \cmark & dim.   & $\mathcal{O}(\epsilon^{-2})$\\
				Prox-PDA \cite{hong2017prox}& \cmark  & \cmark & \cmark &\cmark & const. & $\mathcal{O}(\epsilon^{-1})$ \\
				xFILTER \cite{sun2018distributed} & \cmark  & \cmark & \cmark &\cmark & const.&   $\mathcal{O}(\epsilon^{-1})$ \\
				SONATA \cite{sun2019convergence}& \cmark  & \cmark & \xmark & \cmark & const.  & $\mathcal{O}(\epsilon^{-1})$ \\
				 {\bf Proposed}  & \xmark  & \xmark & \xmark &\cmark&const. & $\mathcal{O}((\epsilon^{-1})\mbox{gr}(\mbox{poly}(\epsilon^{-1})))$ \\
				\bottomrule
			\end{tabular}
		\end{sc}
	\end{center}
\end{table}

\section{On the  Local Lipschitz Conditions (LLC)}\label{sec:global_local}
We note that the algorithms surveyed in the previous section all require both the GLC \eqref{eq:Lip:central} and the LLC  \eqref{eq:Lip:distributed}. 
In fact, this statement holds true for all the algorithms reviewed in a recent survey \cite{chang2020distributed}. On the other hand, for the centralized smooth optimization problems, it is well-known that even the {\it simplest} gradient descent (GD) algorithm is capable of computing first-order stationary solutions only under the GLC \cite{Nesterov04}. A fundamental question then arises: 

\vspace{0.1cm}
\noindent\fcolorbox{black}[rgb]{0.95,0.95,0.95}{\begin{minipage}{0.98\columnwidth}
		\begin{center}
		{\bf (Q1)}	Is LLC \eqref{eq:Lip:distributed} necessary for decentralized algorithms to work for problem \eqref{eq:problem:main}?
		\end{center}
\end{minipage}}
\vspace{0.1cm}

In this section, we provide analysis on the LLC, discuss specific scenarios when it will or will not hold, and show why its presence is crucial for the convergence of many existing decentralized algorithms. 

\subsection{Divergence of State-of-the-art Decentralized Algorithms Without LLC}\label{sub:divergence}

First, we analyze relationships between the GLC and LLC.  Detailed proof is in Appendix \ref{proof:equivalence}. 
\begin{lemma}\label{lemma:equivalence}
	Suppose that each component function $f_i$ is convex. Then the GLC \eqref{eq:Lip:central} is equivalent to the LLC \eqref{eq:Lip:distributed}. That is, if the LLC \eqref{eq:Lip:distributed} holds for a set of $L_i's$, then GLC \eqref{eq:Lip:central} holds with
	$L:=\frac{1}{N}\sum_{i=1}^{N}L_i$; Conversely, if the GLC \eqref{eq:Lip:central} holds true, then there must exist a set of finite positive numbers $\{L_i\}$'s such that the LLC \eqref{eq:Lip:distributed} holds true. 
\end{lemma}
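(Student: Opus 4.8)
The plan is to handle the two implications separately and to observe at the outset that only the converse direction genuinely uses convexity: the forward direction (LLC $\Rightarrow$ GLC) is a one-line consequence of the triangle inequality, whereas the direction GLC $\Rightarrow$ LLC is where the convexity of each $f_i$ is essential.

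For the forward direction, I would write $\nabla f = \frac{1}{N}\sum_{i=1}^{N}\nabla f_i$ and estimate, for arbitrary $\by,\bz$,
\[
\|\nabla f(\by)-\nabla f(\bz)\| \le \frac{1}{N}\sum_{i=1}^{N}\|\nabla f_i(\by)-\nabla f_i(\bz)\| \le \Big(\frac{1}{N}\sum_{i=1}^{N}L_i\Big)\|\by-\bz\|,
\]
which is exactly GLC with $L=\frac{1}{N}\sum_{i=1}^{N}L_i$; note that no convexity is used here.

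For the converse, the key tool is the standard characterization that, for a convex differentiable $g$, the gradient $\nabla g$ is $\ell$-Lipschitz if and only if $\langle \nabla g(\by)-\nabla g(\bz),\,\by-\bz\rangle \le \ell\|\by-\bz\|^2$ for all $\by,\bz$ (see, e.g., \cite[Thm.~2.1.5]{Nesterov04}). Since $f$ is convex as a sum of convex functions and satisfies GLC with constant $L$, applying the ``only if'' part gives $\frac{1}{N}\sum_{i=1}^{N}\langle \nabla f_i(\by)-\nabla f_i(\bz),\,\by-\bz\rangle \le L\|\by-\bz\|^2$. The crucial observation is that convexity of each $f_i$ renders every summand nonnegative (monotonicity of $\nabla f_i$), so I can discard all terms but the $j$-th and still retain
\[
\langle \nabla f_j(\by)-\nabla f_j(\bz),\,\by-\bz\rangle \le N L\,\|\by-\bz\|^2 .
\]
Feeding this back into the same characterization, now through its ``if'' part, shows that $\nabla f_j$ is $(NL)$-Lipschitz, so $L_j:=NL$ is a finite valid constant for every $j$, which is precisely LLC.

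I expect the only delicate step to be the ``if'' half of the convex Lipschitz characterization --- deducing a genuine gradient Lipschitz bound from the quadratic inner-product inequality --- since this is exactly where convexity cannot be dropped. This is consistent with the failure of the converse in the non-convex regime flagged in the introduction (e.g., $f_1=u^3$, $f_2=-u^3$), where GLC holds but LLC does not.
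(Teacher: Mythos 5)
Your proof is correct, and the converse direction takes a genuinely different (and cleaner) route than the paper's. The paper proves GLC $\Rightarrow$ LLC in two passes: first a Hessian argument for twice-differentiable $f_i$ (positive semidefiniteness of each $H_i$ gives $\lambda_{1}(H) \ge \frac{1}{N}\lambda_{1}(H_i)$, then an eigenvalue contradiction), and then a general argument by contradiction at the level of \emph{function values}: it uses the Nesterov equivalence between Lipschitz gradient and the quadratic upper bound $f_j(\bx) \le f_j(\by) + \langle \nabla f_j(\by), \bx-\by\rangle + \frac{L'}{2}\|\bx-\by\|^2$, assumes LLC fails for some $f_j$, adds the plain convexity inequalities $f_i(\tbx) \ge f_i(\tby) + \langle \nabla f_i(\tby), \tbx-\tby\rangle$ for $i \ne j$, and contradicts the GLC quadratic bound for $f$. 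You instead work at the level of \emph{gradient monotonicity}, using the other item of Nesterov's Theorem 2.1.5: for convex $g$, $\nabla g$ is $\ell$-Lipschitz iff $\langle \nabla g(\by)-\nabla g(\bz), \by-\bz\rangle \le \ell\|\by-\bz\|^2$. Both proofs share the same core mechanism --- convexity of the other components lets you discard their nonnegative contributions and transfer the global bound to a single $f_j$ --- but your version is direct rather than by contradiction, needs no separate twice-differentiable case, and yields the explicit constant $L_j = NL$, which is slightly sharper than the paper's purely existential conclusion. One cosmetic remark: in the forward direction, Lipschitzness $\Rightarrow$ the inner-product bound is just Cauchy--Schwarz, so your invocation of convexity of $f$ at that point is unnecessary (as you yourself note, convexity only enters in the ``if'' half applied to each $f_j$); also, the paper labels the forward direction's triangle-inequality step as Jensen's inequality, but it is the same estimate as yours.
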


The next result says that when we allow each $f_i$'s to be non-convex, then the GLC no longer implies the LLC. This is easy to see, for example, we can consider the following simple problem: $f(u) = u^2 + u^q - u^q$, for any $q\ge 3$, and let $f_1(u) = u^2$, $f_2(u) = u^q$ and $f_3(u) = -u^q$.

\begin{lemma}\label{lemma:non-equivalence}
	Suppose that each $f_i$ is not necessarily convex. Then there exist problems in the form of \eqref{eq:problem:central}, where GLC \eqref{eq:Lip:central} holds but the LLC \eqref{eq:Lip:distributed} does not hold.
\end{lemma}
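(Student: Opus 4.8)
The plan is to prove this existence statement by producing an explicit witness and then verifying the two required properties directly, since the claim only asks for the existence of one instance of \eqref{eq:problem:central} for which GLC holds but LLC fails. I would take the example already anticipated in the text: fix $N=3$ and any integer $q\ge 3$, and set $f_1(u)=u^2$, $f_2(u)=u^q$, and $f_3(u)=-u^q$. With the normalization in \eqref{eq:problem:central} this gives a summed function $f(u)=\frac{1}{3}\big(f_1(u)+f_2(u)+f_3(u)\big)=\frac{1}{3}u^2$, where the two degree-$q$ terms cancel exactly; note that the harmless constant $1/3$ plays no role in what follows.

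First I would verify that GLC \eqref{eq:Lip:central} holds. Because of the cancellation, the gradient of the summed function is affine, $\nabla f(u)=\frac{2}{3}u$, so that $|\nabla f(y)-\nabla f(z)|=\frac{2}{3}|y-z|$ for every $y,z\in\mathbb{R}$. Hence \eqref{eq:Lip:central} is satisfied with $L=2/3$, and the summed objective is as well-behaved as in the centralized setting.

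Next I would show that LLC \eqref{eq:Lip:distributed} is violated, for which it suffices to exhibit a single index $i$ admitting no finite $L_i$; I would take $i=2$. Since $\nabla f_2(u)=q\,u^{q-1}$ with exponent $q-1\ge 2$, the difference quotient is unbounded: choosing $w=0$ and letting $v\to\infty$ yields $|\nabla f_2(v)-\nabla f_2(0)|/|v-0|=q\,|v|^{q-2}\to\infty$. Consequently no finite $L_2$ can satisfy \eqref{eq:Lip:distributed} over $\mathrm{dom}(f_2)=\mathbb{R}$, so LLC fails (the identical argument applies to $f_3$). Combining the two verifications gives the claim.

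The argument is essentially a verification, so there is no genuinely hard step; the only point requiring care is to assert the non-Lipschitzness over the \emph{entire} domain $\mathbb{R}$ rather than on a bounded set, since on any compact region these polynomial gradients are of course Lipschitz and the distinction would collapse. I would close by remarking that the construction is robust: replacing $u^q$ by any $C^1$ function with unbounded second derivative produces the same phenomenon, with GLC preserved through exact cancellation in the sum while LLC is destroyed componentwise, indicating that the gap between the two conditions is a generic feature of the non-convex setting rather than an artifact of this specific polynomial.
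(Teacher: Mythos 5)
Your proposal is correct and follows essentially the same route as the paper, which proves the lemma via the very same witness ($f_1(u)=u^2$, $f_2(u)=u^q$, $f_3(u)=-u^q$ with $q\ge 3$): exact cancellation of the degree-$q$ terms makes the sum quadratic, hence GLC holds, while $\nabla f_2(u)=q\,u^{q-1}$ has an unbounded difference quotient on $\mathbb{R}$, so LLC fails. The only difference is that you spell out the verification that the paper leaves as ``easy to see,'' which is fine.
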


From the above result, it is clear that the LLC is more restrictive than the GLC. Meanwhile, our discussion in Sec. \ref{sec:review} suggests that the LLC is {\it sufficient} to guarantee convergence of all the algorithms surveyed therein. It is then natural to ask, whether we can relax the LLC to the GLC for these algorithms? 
Next, we give a negative answer, by showing that 																																																																																																																																																																				three state-of-the-art algorithms diverge (to infinity), if only the GLC but not the LLC is satisfied. 
The detailed proofs can be found in Appendix \ref{proof:dgd} -- \ref{proof:pd}.

\begin{claim}\label{claim:dgd}
	Consider the DGD iteration \eqref{eq:dgd}, and fix $\alpha^r =\alpha>0$ for all $r$. There exists a problem instance satisfying the GLC \eqref{eq:Lip:central}, and a mixing matrix $\bW$ satisfying \eqref{eq:W}, such that no matter how small the constant stepsize is,  one can find an initial solution $\bx^0\in\mathbb{R}^{K}$ that makes DGD diverge (to infinity). {Additionally, if we replace the constant stepsize to the following diminishing stepsize $\alpha^r = \alpha/(1+r)$ (which satisfies \eqref{eq:stepsize}), then for any $\alpha>0$, there exists an initial solution $\bx^0\in\mathbb{R}^{K}$,  such that the DGD still diverges.}
\end{claim}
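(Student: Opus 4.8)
The plan is to exhibit an explicit small-dimensional instance where the decentralized structure forces divergence even though the summed objective is perfectly well-behaved. The guiding intuition is exactly the one flagged in Lemma~\ref{lemma:non-equivalence}: I can split a benign global function into non-convex local pieces whose gradients grow superlinearly, e.g.\ pieces like $u^q$ and $-u^q$ that cancel in the sum but not locally. So I would take $N$ small (say $N=2$ or $N=3$ agents) on a connected graph, choose local functions $f_i$ such that $f=\frac1N\sum_i f_i$ satisfies the GLC \eqref{eq:Lip:central} (in fact I can make $f$ a simple quadratic, or even $f\equiv\text{const}$ plus a quadratic), while individual $\nabla f_i$ are polynomials of degree $\ge q-1$ and hence violate the LLC \eqref{eq:Lip:distributed}. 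A concrete candidate is $f_1(u)=u^q$, $f_2(u)=-u^q$ with $q\ge 3$, so that $f\equiv 0$ trivially satisfies GLC, and I append an agent carrying a mild quadratic to make the argument cleaner if needed.

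The main work is to show the DGD recursion \eqref{eq:dgd} actually blows up. First I would write the iteration in terms of the consensus/disagreement decomposition: let $\bbx^r=\frac1N\b1^\top\bx^r$ be the average and let the disagreement be $\bx^r-\b1\,\bbx^r$. Because $\bW$ is doubly stochastic with \eqref{eq:W}, averaging \eqref{eq:dgd} gives $\bbx^{r+1}=\bbx^r-\frac{\alpha}{N}\sum_i\nabla f_i(x_i^r)$, which sees only the well-behaved gradient \emph{sum}; the danger therefore lives entirely in the disagreement component, where each agent feels its own unbounded $\nabla f_i$. The strategy is to pick the initial point $\bx^0$ so that the agents start off the consensus manifold (a nonzero disagreement), and then argue that the superlinear local gradient feeds the disagreement faster than the mixing step $\bW$ can contract it. Concretely I would track a scalar potential such as the squared disagreement or simply the magnitude $\max_i|x_i^r|$, and show it satisfies a recursion of the form $V^{r+1}\ge c\,(V^r)^{q-1}$ for some $c>0$ once $V^r$ is large enough, which drives $V^r\to\infty$ in finitely many steps regardless of how small $\alpha>0$ is --- the point being that for any fixed $\alpha$ there is an initial radius beyond which the degree-$(q-1)$ gradient term dominates the affine mixing term $(\bW-\bI)\bx^r$.

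For the diminishing-stepsize addendum, the structure is the same but I must be more careful because $\alpha^r=\alpha/(1+r)\to 0$ could in principle tame the growth. The plan is to show that along the chosen trajectory the \emph{gradient magnitude} grows so fast (like $(V^r)^{q-1}$ with $q-1\ge 2$) that even multiplying by $\alpha^r\sim \alpha/r$ the increment $\alpha^r\|\nabla f_i(x_i^r)\|$ still exceeds the contraction from mixing and still forces $V^{r+1}>V^r$ by a non-vanishing factor; equivalently, I would establish a self-reinforcing lower bound showing $V^r$ grows at least geometrically (or faster) so that the extra $1/r$ decay of the stepsize is negligible. The simplest rigorous route is to prove by induction an inequality like $V^{r+1}\ge V^r\bigl(1+\alpha^r\,c\,(V^r)^{q-2}\bigr)$ and then observe that once $V^r$ is large the bracket exceeds $1+\text{const}$, so $V^r\to\infty$ despite $\sum 1/(1+r)^2<\infty$.

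The main obstacle will be controlling the interaction between the mixing matrix and the nonlinear gradient term cleanly enough to get a one-line scalar recursion: the disagreement dynamics are genuinely coupled across agents through $\bW$, and I must ensure the chosen $f_i$'s and the chosen $\bW$ make the unstable direction actually \emph{grow} rather than get averaged away. I expect the cleanest fix is to exploit antisymmetry --- e.g.\ initialize the two ``$\pm u^q$'' agents symmetrically about the consensus value so their local gradient contributions reinforce the disagreement instead of canceling --- which reduces the vector recursion to a scalar one in the single disagreement coordinate and makes both the constant-stepsize and diminishing-stepsize blow-ups transparent.
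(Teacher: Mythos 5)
Your proposal is correct, and it rests on the same counterexample the paper uses: $N=2$ agents joined by one edge, the antisymmetric pair $f_1(u)=\tfrac13 u^3$, $f_2(u)=-\tfrac13 u^3$ (so $f\equiv 0$ satisfies the GLC trivially), and the averaging matrix $\bW=\tfrac12\b1\b1^T$. Where you genuinely differ is in the analysis of the recursion. The paper runs a coupled two-variable induction: under the invariants $x_2^r-x_1^r\ge 2/\alpha$ and $\bar x^r>0$ it shows $(x_2^{r+1}-x_1^{r+1})-(x_2^r-x_1^r)\ge 2\alpha(\bar x^r)^2$ and $\bar x^{r+1}>\bar x^r$, so divergence is driven by the \emph{average} feeding the disagreement (linear growth in $r$); in the diminishing-stepsize case it maintains the additional invariant $\alpha^r(\bar x^r)^2\ge 1/\alpha$ to keep this mechanism alive. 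Your route closes a recursion on the disagreement alone: with this $\bW$, mixing annihilates the disagreement entirely, and the update gives exactly $x_2^{r+1}-x_1^{r+1}=\alpha^r\bigl((x_1^r)^2+(x_2^r)^2\bigr)\ge \tfrac{\alpha^r}{2}\,(x_2^r-x_1^r)^2$, i.e.\ $D^{r+1}\ge \tfrac{\alpha^r}{2}(D^r)^2$ with no reference to $\bar x^r$ at all. For constant stepsize this blows up doubly exponentially once $D^0>2/\alpha$; for $\alpha^r=\alpha/(1+r)$ the invariant $D^r\ge 4(1+r)/\alpha$ is self-maintaining, since $D^{r+1}\ge \frac{\alpha}{2(1+r)}\cdot\frac{16(1+r)^2}{\alpha^2}=\frac{8(1+r)}{\alpha}\ge\frac{4(r+2)}{\alpha}$, which is precisely the ``geometric growth beats the $1/r$ stepsize decay'' argument you sketch. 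Your version is arguably simpler (one scalar invariant instead of three) and yields a faster divergence rate; the paper's version additionally exhibits divergence of the average $\bar x^r$ itself, not only of the consensus error.

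One caveat on your motivation, though it does not affect the proof: the remark that the averaged iteration ``sees only the well-behaved gradient sum'' is false for this instance. The average update involves $\nabla f_1(x_1^r)+\nabla f_2(x_2^r)=(x_1^r)^2-(x_2^r)^2=-2D^r\bar x^r$, which is unbounded off the consensus manifold --- indeed in the paper's proof it is exactly this term that drives $\bar x^r\to\infty$. Since your potential never uses the average, your argument is unharmed, but the danger does not live ``entirely'' in the disagreement component: both components diverge.
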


\vspace{-0.3cm}
\begin{claim}\label{claim:gt}
	Consider the GT iteration \eqref{eq:gt}. There exists a problem instance satisfying the GLC \eqref{eq:Lip:central}, and an associated mixing matrix $\bW$ satisfying \eqref{eq:W}, such that no matter how small the stepsize $\alpha$ is,  one can find an initial solution $\bx^0\in\mathbb{R}^K$ that makes the GT iteration diverge.
\end{claim}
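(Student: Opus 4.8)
The plan is to reuse the mechanism behind Claim~\ref{claim:dgd}: a local gradient that grows \emph{superlinearly} can overwhelm the (norm-bounded) mixing operation, and because the growth is faster than linear, this overwhelming persists for arbitrarily small $\alpha$ provided the initial point is taken large enough. Concretely, I would fix a two–agent instance ($N=2$, $K=1$) joined by a single edge, with $f_1(x)=\tfrac{a}{2}x^2+x^3$ and $f_2(x)=\tfrac{a}{2}x^2-x^3$ for some $a>0$. The cubic terms cancel in the sum, so $f(u)=\tfrac{a}{2}u^2$ has globally Lipschitz gradient and the GLC \eqref{eq:Lip:central} holds, while $\nabla f_1(x)=ax+3x^2$ is not Lipschitz, so the LLC \eqref{eq:Lip:distributed} fails --- exactly the regime produced by Lemma~\ref{lemma:non-equivalence}. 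For the mixing matrix I would take the symmetric doubly stochastic $\bW$ with eigenvalue $1$ on $\mathrm{span}\{\b1\}$ and eigenvalue $\mu\in(-1,1)$ on its orthogonal complement, which satisfies \eqref{eq:W}. (The three–agent instance $f_1=u^2,\ f_2=u^3,\ f_3=-u^3$ of the running example works identically.)

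Next I would pass to the consensus/disagreement coordinates $s^r=\tfrac12(x_1^r+x_2^r)$ and $d^r=\tfrac12(x_1^r-x_2^r)$, in which $\bW$ and $\bW^2$ act as $\mu,\mu^2$ on the $d$–mode. The structural key is that, because the cubic is \emph{odd}, the ``average force'' $\nabla f_1(x_1)+\nabla f_2(x_2)$ vanishes identically on the line $\{s=0\}$. Hence if both initial iterates of the two–step form \eqref{eq:gt} are placed on the antisymmetric line $x_1=-x_2$, say $\bx^{-1}=(d^{-1},-d^{-1})$ with $d^{-1}$ small and $\bx^0=(D,-D)$, then $s^r\equiv 0$ for all $r$ and the whole dynamics collapses to the scalar second–order recursion
\begin{equation*}
d^{r+1} = 2\mu\, d^r - \mu^2 d^{r-1} - a\alpha\,(d^r - d^{r-1}) - 3\alpha\big((d^r)^2 - (d^{r-1})^2\big).
\end{equation*}

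Finally I would show this scalar recursion blows up. With $d^{-1}$ small and $d^0=D>0$, the first step gives $d^1\approx -3\alpha D^2$, so once $D>C/\alpha$ (with $C$ depending only on $\mu,a$) the quadratic term dominates and $|d^1|\gtrsim \alpha D^2\gg D$. From then on the sign of $d^r$ is pinned (negative) and, by induction on the region $\{\,d^r<0,\ |d^r|\ge 2|d^{r-1}|,\ |d^r|\ge R\,\}$ with $R=R(\alpha)$, the bounded mixing part $2\mu d^r-\mu^2 d^{r-1}$ (of magnitude $\le 3|d^r|$ since $|\mu|<1$) and the linear gradient part $a\alpha(d^r-d^{r-1})$ are swamped by the quadratic gap $3\alpha\big((d^r)^2-(d^{r-1})^2\big)\ge \tfrac94\alpha (d^r)^2$, yielding $|d^{r+1}|\ge 2\alpha (d^r)^2$. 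Thus $|d^r|$ grows at least like repeated squaring and $\|\bx^r\|^2=2(d^r)^2\to\infty$; since $D=D(\alpha)$ is chosen \emph{after} $\alpha$ is fixed, divergence holds no matter how small $\alpha$ is.

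The main obstacle is this last step: upgrading the one–step amplification into a rigorous $|d^r|\to\infty$ rather than a bounded oscillation or a stall. Because the recursion is second order, one must control the joint sign pattern of $(d^r,d^{r-1})$ and keep the quadratic difference $(d^r)^2-(d^{r-1})^2$ bounded away from zero \emph{in the expanding direction} at every iteration; the clean device is the forward–invariant region above, which simultaneously forces strict magnitude growth and guarantees the superlinear term keeps dominating. Verifying that this region is invariant, and confirming that the $s\equiv 0$ reduction is exact (so that no consensus drift reintroduces a stabilizing restoring term), is where the real work lies; the GLC/LLC verification and the eigenstructure of $\bW$ are routine by comparison.
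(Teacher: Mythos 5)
Your proposal is correct, but it takes a genuinely different route from the paper's own proof. The paper uses the pure cubic pair $f_1(x)=\tfrac13x^3$, $f_2(x)=-\tfrac13x^3$ with $\bW=\tfrac12\b1\b1^T$ and analyzes the \emph{coupled} two-dimensional dynamics of the average $\bar x^r$ and the gap $x_2^r-x_1^r$: via the identity $(x_1^r)^2+(x_2^r)^2=2(\bar x^r)^2+\tfrac12(x_2^r-x_1^r)^2$ it maintains four inductive inequalities (gap increments at least $2/\alpha$, averaged squares increasing by at least $1/\alpha^2$ per step), verifies them from the explicit \emph{consensual} initial point $\bx^{-2}=(1/\alpha,1/\alpha)$, and concludes linear divergence $x_2^r-x_1^r\ge(2r+16)/\alpha$. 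You instead decouple the dynamics by restricting to the antisymmetric line $x_1=-x_2$, which is exactly invariant under GT because the summed gradient force vanishes there (this also holds with the standard initialization $\by^0=\nabla g(\bx^0)$, so a single free initial point suffices); this collapses the iteration to a scalar second-order recursion in the disagreement $d^r$, and your forward-invariant region $\{d^r<0,\ |d^r|\ge 2|d^{r-1}|,\ |d^r|\ge R(\alpha)\}$ indeed works: with $R(\alpha)=(12+6a\alpha)/\alpha$ one gets $|d^{r+1}|\ge 2\alpha(d^r)^2\ge 24|d^r|$, the region is entered at $r=1$ once $D\gtrsim 1/\alpha$, and $|d^r|$ blows up at a repeated-squaring rate. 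Your route buys three things the paper's does not: generality in the mixing matrix (any second eigenvalue $\mu\in(-1,1)$ versus the paper's $\mu=0$), a stronger counterexample (with $a>0$ the average function $f$ is strongly convex, yet GT still diverges), and doubly exponential rather than linear divergence. The paper's route buys two things yours does not: the instability is seeded from an exactly consensual initial point, so it is manifestly not an artifact of a huge initial disagreement, and \emph{both} the average $\bar x^r$ and the consensus error diverge, whereas in your construction $\bar x^r\equiv 0$ stays bounded and only the disagreement mode explodes --- the iterates are still unbounded, which suffices for the claim as stated.
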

\vspace{-0.3cm}

\begin{claim}\label{claim:pd}
	Consider the Prox-PDA iteration \eqref{eq:pd}. There exists a problem instance satisfying the GLC \eqref{eq:Lip:central},  such that for any $\rho>0$,  one can find an initial solution $\bx^0\in\mathbb{R}^K$ that makes the Prox-PDA iteration diverge.
\end{claim}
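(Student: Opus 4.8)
The plan is to construct an explicit low-dimensional problem instance for which the simplified Prox-PDA iteration \eqref{eq:pd} diverges, mimicking the counterexamples already used for DGD and GT but adapted to the primal-dual structure. First I would take the scalar setting $K=1$ and a small network (say $N=2$ or $N=3$ agents on a simple graph), and pick local functions $f_i$ that are non-convex with super-quadratic growth so that the gradient difference $\nabla g(\bx^r)-\nabla g(\bx^{r-1})$ grows faster than linearly in the iterates, while arranging that the \emph{sum} $f=\frac1N\sum_i f_i$ retains a Lipschitz gradient (GLC). A natural choice, following the construction already suggested in the text for Lemmas \ref{lemma:non-equivalence}, is to let the high-order non-convex terms cancel in the average (e.g. $f_1(u)=u^q$, $f_2(u)=-u^q$ for some $q\ge 3$, plus a benign quadratic term to make the sum coercive), so GLC holds for $f$ but each individual $\nabla f_i$ is unbounded in slope.

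Next I would analyze the resulting recursion \eqref{eq:pd}, which is a second-order linear-in-$\bx$ difference equation driven by the nonlinear term $\nabla g(\bx^r)-\nabla g(\bx^{r-1})$. The key observation to exploit is that the matrix prefactor $\bI_N-\tfrac{\rho\bA^\top\bA}{\beta+\rho\lambda_{\max}(\bA^\top\bA)}$ acts as a contraction only on the consensus-orthogonal (disagreement) subspace, while on the consensus subspace $\mathrm{span}\{\b1\}$ it is the identity. So I would decompose $\bx^r$ into its average component $\bar{\bx}^r$ and its disagreement component. The divergence mechanism is the same as for DGD and GT: because the local gradients are not individually Lipschitz, a large initial disagreement feeds a gradient-difference term whose magnitude is super-linear in the iterate size; once the iterate magnitude crosses a threshold determined by $\rho$ and $\beta$, the nonlinear feedback dominates the linear contraction and the iterates blow up. The plan is to show that for a suitably chosen $\bx^0$ (taken large relative to the $\rho$-dependent threshold, with a nonzero disagreement component), the magnitude $\|\bx^{r+1}\|$ strictly increases and in fact grows without bound, for \emph{every} fixed $\rho>0$.

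The cleanest way to make this rigorous is to track a scalar Lyapunov-type quantity, for instance the disagreement energy or simply $\|\bx^r\|$, and to establish a strict growth inequality of the form $\|\bx^{r+1}\|\ge c\,\|\bx^r\|^{q-1}$ (or an analogous lower bound) whenever $\|\bx^r\|$ exceeds a $\rho$-dependent constant; iterating this super-linear recursion then forces divergence. Since $\rho$ appears only through the bounded coefficient prefactors, one can choose the growth exponent $q$ large enough (relative to the fixed contraction factor) so that the super-linear gradient term always eventually overwhelms the linear terms no matter how $\rho$ is set, which is exactly the ``for any $\rho>0$'' uniformity the claim demands.

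The main obstacle I expect is handling the $\rho$-dependence uniformly: unlike DGD, where the stepsize is a free tunable parameter, here $\rho$ enters the matrix prefactor in a nontrivial way (through both the numerator $\rho\bA^\top\bA$ and the denominator $\beta+\rho\lambda_{\max}(\bA^\top\bA)$), so the effective contraction strength and the blow-up threshold both shift with $\rho$. I would address this by noting that as $\rho$ ranges over $(0,\infty)$ the prefactor matrix stays within a compact family whose eigenvalues on the disagreement subspace are bounded away from unbounded values, so the \emph{linear} part of the recursion has uniformly bounded operator norm; then choosing the initial point $\bx^0$ large enough as a function of $\rho$ (the claim permits $\bx^0$ to depend on $\rho$) guarantees the super-linear nonlinear term dominates. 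Verifying that the disagreement component cannot be annihilated by the prefactor at each step — i.e. that the iteration genuinely leaves the consensus subspace so the non-Lipschitz gradients are actually triggered — is the delicate bookkeeping step, and I would confirm it by picking $f_i$ and the graph so that $\nabla g$ evaluated on the disagreement direction re-injects energy into that same direction.
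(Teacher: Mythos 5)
There is a genuine gap in the core mechanism of your argument. Your blow-up step is a growth inequality of the form $\|\bx^{r+1}\|\ge c\,\|\bx^r\|^{q-1}$ that is supposed to hold \emph{whenever} $\|\bx^r\|$ exceeds a $\rho$-dependent threshold. This cannot be correct as stated, because the nonlinear driving term in \eqref{eq:pd} is the \emph{difference} $\nabla g(\bx^r)-\nabla g(\bx^{r-1})$, not the gradient itself: if $\bx^r=\bx^{r-1}$ (or the two are close), this term vanishes no matter how large $\|\bx^r\|$ is, and the recursion reduces to $\bx^{r+1}=M(2\bx^r-\bx^{r-1})=M\bx^r$ with $\|M\|\le 1$, which gives no growth at all. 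So size of the current iterate alone can never trigger divergence; what must be controlled is a joint inductive condition on \emph{consecutive-iterate increments} — both the increment of the consensus error $x_2^r-x_1^r$ and the increment of the average $\bar{x}^r$ — showing that these increments stay large and in fact grow. This is exactly the content of the four coupled induction hypotheses used in the paper's proof of Claim~\ref{claim:gt} (conditions \eqref{eq:condition1}, \eqref{eq:condition2} and their propagation \eqref{eq:GT:result}), and it is the part your proposal defers to ``delicate bookkeeping'' while misidentifying what needs to be tracked: the issue is not whether the prefactor annihilates the disagreement direction, but whether the trajectory keeps moving fast enough for the gradient-difference term to keep injecting energy.

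You are also working much harder than necessary on the $\rho$- and $\beta$-dependence. The paper's proof avoids your dynamical-systems analysis entirely by an exact algebraic reduction: take the two-node graph, $\beta=0$, and $f_1(x_1)=\tfrac13 x_1^3$, $f_2(x_2)=-\tfrac13 x_2^3$. Then $\bA=[1,\,-1]$, $\lambda_{\max}(\bA^T\bA)=2$, and the prefactor $\bI-\tfrac12\bA^T\bA$ equals the averaging matrix $\bW=\tfrac12\b1\b1^T$, which is \emph{idempotent} ($\bW=\bW^2$). Hence \eqref{eq:pd} becomes
\begin{align*}
\bx^{r+1}=2\bW\bx^r-\bW^2\bx^{r-1}-\alpha\left(\nabla g(\bx^r)-\nabla g(\bx^{r-1})\right),\qquad \alpha=\tfrac{1}{2\rho},
\end{align*}
which is literally the GT recursion \eqref{eq:gt} with stepsize $\alpha=1/(2\rho)$ (and the first Prox-PDA step matches the GT initialization since the dual variable starts at zero). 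The divergent initial condition $\bx^{-2}=(1/\alpha,\,1/\alpha)$ from the proof of Claim~\ref{claim:gt} then transfers verbatim, giving divergence for every $\rho>0$. If you want to keep your more general route (arbitrary $\beta>0$, general graphs), it could in principle yield a stronger statement, but you would first have to replace your magnitude-based Lyapunov inequality with increment-based induction hypotheses of the kind just described.
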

\vspace{-0.3cm}

\vspace{-0.3cm}
\section{The Proposed Algorithm}\label{sec:proposed}
Through the discussions in the previous section, we see concrete examples in which the lack of the LLC makes many state-of-the-art decentralized algorithms diverge. In fact, by using similar constructions in Claim \ref{claim:dgd} -- \ref{claim:pd}, it is easy to show that the lack of GLC will result in similar divergence behaviors. Despite the fact that we did not illustrate such divergence behaviors for {\it all existing} decentralized algorithms, we hope that the readers can see the need to deal with such undesirable situations (from both theoretical and practical standpoint). 

One may wonder at this point, that is it even possible to relax the LLC and/or GLC condition for decentralized algorithms? More formally, we have the following research question:

\vspace{0.1cm}
\noindent\fcolorbox{black}[rgb]{0.95,0.95,0.95}{\begin{minipage}{0.98\columnwidth}
		\begin{center}
			{\bf (Q2)}	With neither LLC nor GLC, how to design convergent decentralized algorithms for problem \eqref{eq:problem:main}?
		\end{center}
\end{minipage}}
\vspace{0.1cm}

In this section, we develop and analyze new algorithms capable of computing first-order stationary solutions with {\it neither} LLC nor GLC. 

\subsection{Preliminaries}

To begin our discussion, let us first state the assumptions we need to analyze problem \eqref{eq:problem:central}. 

\begin{assumption}\label{ass:problem}
{\bf (About the problem.)}	We assume the following: 
	\begin{enumerate}
		\vspace{-0.1cm}
		\item For any given compact set $X\subset  \mathbb{R}^K$, the average function $f(\bu)$ has Lipschitz gradient, that is:
		\begin{align}\label{eq:f:lip}
		\|\nabla f(\by) -\nabla f(\bz)\|\le L(X)\|\by-\bz\|, \; \forall~\by,\bz\in X;
		\end{align}
		Further, for any given compact set $X\subset \mathbb{R}^K$, each component function has Lipschitz gradient:
		\begin{align}\label{eq:fi:lip}
		\|\nabla f_i(\by) -\nabla f_i(\bz)\|\le L_i(X)\|\by-\bz\|, \; \forall~\by, \bz\in X,
		\end{align}
	where $L(X)\in (0, \infty)$ and $L_i(X)\in (0, \infty)$ are finite constants dependent on the set $X$. Additionally, if $X_1\subseteq X_2\subset \mathbb{R}^K$, then $L(X_1)\le L(X_2)$, $L_i(X_1)\le L_i(X_2), \forall~i$; Without loss of generality, we assume that $L_i(X)\ge 1, \forall~i, \; \forall~X$.  Define 
	$\hat{L}(X):=\max_{i}L_i(X).$ Clearly we have $\hat{L}\ge L$. 
		\vspace{-0.1cm}
		\item The average function $f(\bu)$ is lower bounded by $\underline{f}$: That is, the GLB condition in \eqref{eq:lower:bound} holds.
		\vspace{-0.1cm}
		\item The domain of each of the local functions satisfies $\mbox{dom}(f_i)\equiv \mathbb{R}, \; \forall~i$; 
	\end{enumerate}
\end{assumption} 

	\vspace{-0.2cm}

It is easy to see that the assumptions made above are much more relaxed compared with the GLC and LLC. First,  the conditions \eqref{eq:f:lip} and \eqref{eq:fi:lip} suggest that the average function $f(\by)$ as well as the local functions are no longer required to have Lipschitz gradients over the entire space $\mathbb{R}^K$. Instead, what is essentially needed is that the functions are continuously differentiable. Second, the local functions can have wider choices than the global function, for example $f_i(x)=x^3$ is allowed, but $f(u)=u^3$ is not allowed since it is not lower bounded on $\mathbb{R}$. Third, although the present paper only focuses on {\it unconstrained} problems in which the domain is the entire space, the algorithms and analysis developed here can be easily extended to to {\it constrained} problems where each local node has a {\it shared} constraint set $\bx_i\in X, \; \forall~i$, for some closed convex set $X\subset \mathbb{R}^K$. Finally, it is important to note that  neither the average function $f(\by)$ nor the local functions $f_i(x)$'s is required to have {\it compact} level sets. That is, the following sets do not need to be compact for any constant $c$:
\begin{align}\label{eq:level:set}
Y:= \{\by: f(\by)\le c\}, \; X_i:=\{\bx_i: f_i(\bx_i)\le c\}.
\end{align}
Therefore the considered problem class covers important problems such as matrix factorization problems.  
See the examples provided below. 

\noindent{\bf Examples.} First, note that the counter examples used to show Claims \ref{claim:dgd} -- \ref{claim:pd} all satisfy Assumption \ref{ass:problem}. Second, consider the following distributed matrix factorization problem \cite{Daneshmand18}
	\begin{align}
	f_i(\bu,\bw_i) = \|\bu^T \bw_i - v_i\|^2, \; \forall~i,
	\end{align}
	where $\bu$ and $\bw_i$ is the optimization variable (i.e., the shared dictionary and the local coefficients, respectively), and $v_i$ is the observed data. Clearly the above problem satisfies Assumption \ref{ass:problem}. 
In fact, one can generalize the problem to the matrix setting (in which the dictionary $\bu$ is a matrix), or change the objective to some generic distances such as the $\ell_q$ distance given below (for any $q>1$)
	\begin{align}
f_i(\bu,\bw_i) = \|\bu^T \bw_i - v_i\|_q^q, \; \forall~i.
\end{align}

	Moreover, consider the following polynomial function with even order
	\begin{align}\label{eq:poly:even}
	f_i(\bu) = \sum_{w} \sigma_{i,w} u^{w_1}_1\times \cdots \times u^{w_K}_K,
	\end{align}
	where the sum is over all $w$ such that $\sum_{k=1}^{K}w_k\le Q$, $Q$ is an even number, and  each $w_k~\mbox{\rm is even}$; and $\sigma_{i,w}$'s are all nonnegative. It is easy to verify that Assumption \ref{ass:problem} is satisfied. Note that the requirements of $Q$ being an even number and $\sigma_{i,w}$ being nonnegative is to ensure that $f(\by)$ is lower bounded.

 Another example is related to neural network training problem, with SoftPlus activation function $ \phi(x) = \ln(1 + e^x) $, a common smoothed approximation to ReLU. Consider the following problem: \[ f_i(W_1,W_2) = \| W_2 {{\bphi}}(W_1 z_i) - v_i \|^2 \]
where $\left( z_i\in \mathbb{R}^k, v_i \in \mathbb{R} \right)$ is the $i$th data point; {
$ W_2 \in \mathbb{R}^{1 \times h} $ and $ W_1 \in \mathbb{R}^{h \times k} $;} $\bphi(W_1z_i):= [\phi([W_1z_i]_1, \cdots, [W_1z_i]_h)]^T$. If we calculate the gradient with respect to $ W_2 $ and $ W_1 $, we have 
	\begin{align*}
		\nabla_{W_2} f_i(W_1,W_2) &= 2\left( W_2\bphi(W_1z_i) - v_i \right)\bphi(W_1z_i)^T \\
		\nabla_{W_1} f_i(W_1,W_2 ) &= 2(W_2 D)^T (W_2 \bphi( W_1 z_i) - v_i)z_i^T
	\end{align*}
	where $D := \nabla_{h} \bphi(h)|_{h=W_1 z_i} = \mbox{diag}( [\phi^\prime([W_1 z_i]_1), \cdots,  \phi^\prime([W_1 z_i]_h)])$ is a diagonal matrix and each element on its diagonal is between 0 and 1 due to the property of SoftPlus. Furthermore, we have 
	\begin{subequations}\label{nn:example:layer}
		\begin{align}
		\nabla_{W_2} f_i(W_1,W_2 ) - \nabla_{W_2} f_i(W_1,\widetilde{W_2}) &= 2(W_2 - \widetilde{W_2})\bphi(W_1z_i)\bphi(W_1z_i)^T  \label{nn:example:layer2} \\
		\nabla_{W_1} f_i(W_1, W_2) - \nabla_{W_1} f_i(\widetilde{W_1}, W_2) &= 2(W_2D)^T W_2\left(\bphi(W_1z_i) - \bphi(\widetilde{W_1}z_i)\right)z_i^T. \label{nn:example:layer1}
		\end{align}
	\end{subequations}
	It is easy to see that the lhs  of the two equations above are not Lipschitz continuous if $ W_1 $ and $ W_2 $ are unbounded. For \eqref{nn:example:layer2}, the local Lipschitz constant is easy to calculate as long as $ W_1 $ is defined on a compact set. Moreover, since the SoftPlus activation function has Lipschitz constant equal to 1, it is also straightforward to compute the Lipschitz constants for \eqref{nn:example:layer1}. 

\subsection{The MAGENTA Algorithm}
Now we are ready to discuss the proposed algorithm. Our proposed algorithm consists of multiple {\it stages}, each containing multiple {\it inner iterations}. The key idea is to impose a series of artificial constraints $x_i\in X(t)$ to each of the local problem, one for each stage $t$, so that the iterates always stay within some compact sets. At each stage, the following subproblem is considered, which is a usual distributed optimization problem with per-node constraints: 
\begin{align}\label{eq:problem:constrained}
\min\; \frac{1}{N}\sum_{i=1}^{N} f_i(\bx_i), \quad \mbox{s.t.}~{\bx_i = \bx_j}, \; \mbox{\rm if~$(i,j)$~ are neighbors}, \; \bx_i\in X(t), \; \forall~i.
\end{align}
The agents will perform a version of distributed gradient tracking algorithm to compute certain (sufficiently accurate) solutions, and then move on to the next stage. Overall, the key in the design and analysis is to show that, the entire process will produce a desirable solution after a finite number of stages. Further, it is crucial to understand the tradeoff between the following quantities: 1) the accuracy of each stage; 2) the speed in which the constraint set increases; 3)  the total number of iterations required to reach certain $\epsilon$-stationary solution.

Before we state the algorithm, we make the following assumption on the weight matrix. 
\begin{assumption}\label{ass:algorithm}
	{\bf (About the Algorithm).}
	\begin{itemize}
		\item We assume that the weight matrix $\bW$ satisfies \eqref{eq:W}, and it follows that:
		\begin{align}\label{eq:eta}
		\underline{\lambda}_{\max}(\bW) = \eta<1, \; 	\; 
		\|\bW - \frac{1}{N}\b1 \b1^T\|<1,
		\end{align}
		where $\underline{\lambda}_{\max}(\cdot)$ denotes the second largest eigenvalue of $\bW$. 
	\end{itemize}
\end{assumption}

\begin{algorithm}[h]
	\caption{{\small {\it \underline{M}ulti-st\underline{a}ge \underline{g}radi\underline{en}t \underline{t}r{a}cking \underline{a}lgorithm} (MAGENTA) }}
	\label{alg:p1}
	\begin{algorithmic}	
		\State {\bfseries Input:} $\bx{(0)}$, $\by_i(0) = \nabla f_i(\bx_i{(0)}), \; \forall~i$, let $\bz=\bar{\bx}{(0)}$
		\For{$t=0,1,\ldots, T$}
		\State Set $X(t) = B(v^t, \bz)\subset\mathbb{R}^K$ 
		\State Determine the total inner iteration number $R(t)$ 
		\For{$r=0,1,\ldots$}
		\State Let $\bx_i^0 = \bx_i(t)$, $\by_i^0 = \by_i(t)$,   for all $i\in[N]$
		\State Calculate the local Lipschitz constant $L_i(X(t))$, for all $i\in[N]$
		\State Calculate $\hat{L}(X(t))$, for all $i\in[N]$
		\State Calculate $\alpha(t)$ according to \eqref{eq:alpha:choice:2}, set $\beta=1/2$
		\State Calculate the local gradient $\nabla f_i(\bx^r_i)$ for all $i\in[N]$
		\State Update $\bv^{r+1}_i$ by \eqref{eq:vupdate1}  for all $i\in[N]$
		\State Update $\bx^{r+1}_i$ by \eqref{eq:xupdate} for all $i\in[N]$
		\State Update $\by^{r+1}_i$ by \eqref{eq:yupdate} for all $i\in[N]$
		\If{$r+1 = R(t)$}
		\State Output, let  $\bx(t+1) = \bx^{R(t)}$, $\by(t+1) = \by^{R(t)}$
		\EndIf
		\EndFor
		\EndFor
	\end{algorithmic}
\end{algorithm}

{The proposed algorithm, named  \underline{M}ulti-st\underline{a}ge \underline{g}radi\underline{en}t \underline{t}r{a}cking \underline{a}lgorithm (MAGENTA), is given in the table above.}
In the following, each stage is indexed by $t$, while each {\it inner iteration} is indexed by $r$.  In the MAGENTA, $\alpha(t)>0$ and $\beta\in(0,1)$ are the stepsizes to be chosen, and their values can change across different stages; The specific steps of the $\bx_i, \bv_i$ and $\by_i$ updates are given below (where ${\cal{N}}_i$ indicates the neighbor set for node $i$, $\bW_{ij}$ denotes $(i,j)$th element of $\bW$):
\begin{subequations}\label{eq.updateofalg}
	\begin{align}
	\widetilde{\bx}^{r+1}_i & = \arg\min_{x_i\in X(t)} \langle \by^r_i, \bx_i-\bx^r_i\rangle + \frac{1}{2\alpha(t)}\|\bx_i-\bx^r_i\|^2, 
	\quad \bv_i^{r+1}  = \tbx^{r+1}_i - \bx^r_i \label{eq:vupdate1} \\
	\bx^{r+1}_i & =\sum_{j\in\mathcal{N}_i}\bW_{ij}(\bx^r_j + \beta \bv_j^{r+1}), \label{eq:xupdate}
	\\
	\by^{r+1}_i & = \sum_{j\in\mathcal{N}_i}\bW_{ij}\by^r_j+\nabla f_i(\bx^{r+1}_i)-\nabla f_i(\bx^{r}_i). \label{eq:yupdate}
	\end{align}
\end{subequations}
Note that in step \eqref{eq:vupdate1}, each node performs a gradient projection step, where the feasible set $X(t)$ is {\it common} to all the agents, and it is determined in the current outer loop. Also note that Assumption \ref{ass:algorithm} on the $\bW$ matrix ensures that, as long as $\bx^0$ is feasible and $\beta\in(0,1), \; \forall~i$, then $\tbx_i^{r+1}, \bx_i^{r+1}\in X(t)$, for all $r$. 
Additionally, the updates \eqref{eq:vupdate1} -- \eqref{eq:yupdate} can be written compactly as: 
\begin{align}
\bv^{r+1} & = \tbx^{r+1}- \bx^r,\;  \bx^{r+1}= \bW(\bx^r + \beta\bv^{r+1}), \; \by^{r+1}= \bW \by^{r} + \nabla g(\bx^{r+1}) - \nabla g(\bx^r).\label{eq:yupdate:compact}
\end{align}
It is worth noting that in each inner iteration \eqref{eq:vupdate1} -- \eqref{eq:yupdate} , $\mathcal{O}(1)$ local gradient computation and $\mathcal{O}(1)$ local computation take place for each agent. It is also important to note that the MAGENTA can be viewed as a {\it meta algorithm}, in which some existing algorithms for dealing with constrained consensus problem are wrapped around by the outer stages. In Algorithm 1, the inner iterations \eqref{eq.updateofalg} take the form of constrained gradient tracking algorithm \cite{sun2019convergence}, so the analysis below is also based on this algorithm.

%

\begin{remark}
	The sequence of feasible sets  $X(t)$'s is a sequence of balls (centered at origin) with increasing radii. At each given iteration $t$, all the nodes have the common constraint $X(t)$. Such a construction is the key to our proposed algorithm, for the following reasons:
	
\noindent{\bf 1)} For a given stage, when the agents share the common constraint $X(t)$, the analysis can be performed relatively easily; To the best of our knowledge, there has been no existing  analysis for cases where the agents have different constraints (except for the dual perturbation algorithm \cite{Hajinezhad17inexact}, which yields slower rates);

\noindent{\bf 2)}  Because the rules for setting the constraints are simple, the agents can adopt these rules locally without communicating with the neighbors;

\noindent{\bf 3)}  We will see subsequently, that the sequence  $\{v^t\}$ has to be carefully designed to optimize the overall convergence speed.
\end{remark}

\begin{remark}
We note that to compute the stepsize $\alpha(t)$, the agents are required to compute $\hat{L}(t):=\max_{i} L_i(t)$. This operation can be done with in $N$ iterations by performing the classical max-consensus algorithm; see, e.g., \cite{Nejad}. Further, it is noting that requiring the knowledge of the maximum Lipschitz constants is common in the existing algorithms, such as Prox-PDA and GT (see, e.g., Eqs. (D4) -- (D6) in \cite{sun2019convergence}).
\end{remark}

\section{Convergence Analysis}

In this subsection, we analyze the convergence of the two proposed algorithms. 

\subsection{Stationarity Conditions}\label{sub:optimality}
Before we start, we will define the notion of first-order stationarity, as well as the complexity measures we are interested in.  For simplicity, we ignore the stage index $t$ whenever possible. 

Let us start by considering the original unconstrained problem \eqref{eq:problem:main}. Suppose we have an iterate $\bx^r$ available, then it achieves the {\it exact}  first-order stationarity if the following holds:
\begin{align}\label{eq:exact:1}
\left\|\overline{\nabla g(\bx^r)}\right\|^2 + \|\bx^r -\b1 \bar{\bx}^r\|^2=0,
\end{align} 
where $\bar{\bx}^r$ is the average of $\bx^r_1,\cdots, \bx^r_N$ as defined in the notation section, and $\overline{\nabla g(\bx^r)}$ is defined similarly. 
That is, all the local variables are in consensus, and the average of the local gradients is zero. 
Now suppose we have an iterate $\bx^r$ and $\by^r$ generated by the proposed algorithm. Then the algorithm achieves the exact  first-order stationarity if the following {\it stationarity gap} function is zero:
\begin{align}\label{eq:exact:2}
\mbox{gap}(r):=\|\by^r\|^2 + \|\bx^r -\b1 \bar{\bx}^r\|^2 + \|\by^r-\b1\bar{\by}^r\|^2=0.
\end{align} 
To see this, note that from \eqref{eq:yupdate:compact}, the following hold:
\begin{align*}
\bby^{r+1} = \bby^r + \overline{\nabla g(\bx^{r+1})}- \overline{\nabla g(\bx^{r})}.
\end{align*}
By expanding the above equation, and using the initial condition that $\bby^0 = \overline{\nabla g(\bx^{0})}$, we obtain
\begin{align}\label{eq:yg}
\bby^{r} = \overline{\nabla g(\bx^{r})}, \; \forall~r.
\end{align}
Therefore, it is clear that when $\by^r-\b1\bar{\by}^r =0$, the  left-hand-side (lhs) of \eqref{eq:exact:2} is the same as that of \eqref{eq:exact:1}. It follows that \eqref{eq:exact:2} is also a valid notion of exact first-order stationarity for both algorithms. Next, we discuss the $\epsilon$-stationary solution for the proposed algorithms.  

Following \eqref{eq:exact:2} and  by taking into consideration the inner iterations, we can also define the $\epsilon$-stationary solution as follows:
\begin{align}\label{eq:gap:unconstrained}
\mbox{avg-gap}(t) := \frac{1}{R(t)}\sum_{r=0}^{R(t)-1} \left(\|\by^r\|^2 + \|\bx^r -\b1 \bar{\bx}^r\|^2 + \|\by^r-\b1\bar{\by}^r\|^2\right)\le \epsilon.
\end{align} 
Compared with \eqref{eq:exact:2}, we have relaxed the right-hand-side (rhs) from $0$ to some constant $\epsilon>0$, while in the lhs we take an average of the gap function from in $R(t)$ inner iterations. If \eqref{eq:gap:unconstrained} holds true, then there must exist an iteration $r\in [0, R(t)-1]$ such that  $\mbox{gap}(r)\le \epsilon.$
Further, note that at each stage $t$, we are solving a {\it constrained} problem, so a related definition of $\epsilon$- stationary solution is:
\begin{align}\label{eq:gap:constrained}
\widehat{\mbox{avg-gap}}(t):=&\frac{1}{R(t)}\sum_{r=0}^{R(t)-1}\|\bv^{r+1}/\alpha(t)\|^2 +\|\bx^r-\b1 \bbx^r\|^2 + \|\by^r -\b1\bby^r\|^2\le \epsilon.
\end{align}
If $\tbx^{r+1}_i$ in \eqref{eq:vupdate1} {\it does not} touch the boundary of $X(t)$ for all $i$ and all $r\in[0, R(t)-1]$, then  \eqref{eq:gap:constrained} and \eqref{eq:gap:unconstrained} are equivalent. This is because in this case problem \eqref{eq:vupdate1} can be effectively written as an unconstrained problem, and we have, $\bv^{r+1}/\alpha(t) = (\tbx^{r+1}-\bx^r)/\alpha(t) = \by^{r},$
where the last equality comes from the optimality condition \eqref{eq:vupdate1}. On the contrary, if for  some $i$, and some $r$,  $\tbx^{r+1}_i$ touches the boundary of $X(t)$, then \eqref{eq:gap:constrained} and \eqref{eq:gap:unconstrained} are {\it not} equivalent.

\subsection{Convergence Analysis for MAGENTA}\label{sub:convergence:1}

Our analysis below will be focused on characterizing the communication and computation complexities of MAGENTA. That is, the total number of times that the communication steps \eqref{eq:yupdate:compact} are performed for each node, as well as the total number of times that the local gradients $\nabla f_i(\bx_i)$'s are computed by each node, before an $\epsilon$-stationarity solution \eqref{eq:gap:unconstrained} is found.

Let us first focus on the inner iterations of the algorithm. 
Let us define $\bw :=(\bx,\by, \bbx,\bby)$, and define a potential function $P(\cdot)$ as below: {\small
	\begin{align}\label{eq:potential}
	P(\bw^{r+1};t): & =  f(\bbx^{r+1}) + \|\bx^{r+1}-\b1\bbx^{r+1}\|^2 + \frac{1-(1+\gamma)\eta^2}{32 (1+1/\xi)\hat{L}^2(t)}\|\by^{r+1}-\b1\bby^{r+1}\|^2,
	\end{align}}
where we define $\hat{L}(t):=\max_{i} L_i(t)$. 
Clearly, we have $P(\bw^{r+1};t)>\underline{f}$, for all $r$ and $t$, as long as $1-(1+\gamma)\eta^2>0$. 
Then we have the following convergence result, which essentially says that the iterates converge to some $\epsilon$-stationary solutions for the constrained problem \eqref{eq:problem:constrained}. The proof can be found in Appendix \ref{app:proof:T1}.
\begin{theorem}\label{thm:1}
	Suppose that Assumptions \ref{ass:problem} -- \ref{ass:algorithm} hold true. Let $\epsilon>0$ be some constant. Suppose that the algorithm parameters $\gamma, \xi, \beta, \alpha(t)$ are chosen such that the following conditions hold:
	{\small \begin{align}
		& (1+\gamma)\eta^2<1, \quad (1+\xi)\eta^2<1, \quad \beta=\frac{1}{2}\label{eq:alpha:choice:1}\\
		& \alpha(t)= \min\left\{\frac{1}{{8} N}\left(\frac{1}{\hat{L}(t)/2N+(1/\gamma+5/4)}\right),  \frac{N(1-(1+\gamma)\eta^2)(1-(1+\xi)\eta^2)}{{64} (1+1/\xi)\hat{L}^2(t)},1 \right\}\label{eq:alpha:choice:2}.
		\end{align}	
		Then fix stage $t$, assume that the total inner iteration $R(t)$ satisfies $R(t)= \frac{1}{\epsilon\alpha(t)}$, we will have
		\begin{align}\label{eq:epsilon:stationarity}
		{\hspace{-0.3cm}\frac{1}{R(t)}\sum_{r=0}^{R(t)-1}\hspace{-0.2cm}\left({c_1}\left\|\frac{\bv^{r+1}}{\alpha(t)}\right\|^2 + {c_2}\|\bx^r-\b1 \bbx^r\|^2 + \frac{2c_3}{N(1-(1+\gamma)\eta^2)}\|\by^r -\b1\bby^r\|^2\right)\le (P(\bw^0,t)-\underline{f})\epsilon},
		\end{align}}
	where the positive constants $c_1,c_2,c_3$ are given by
	{\small\begin{subequations}\label{eq:c}
			\begin{align}
			& c_1 : = {\frac{1}{4}\left(\frac{1}{4N} - \frac{L(t)\alpha(t)}{2N} - \alpha(t)\left(\frac{1}{\gamma}+\frac{5}{4}\right)\right)>
				\frac{1}{32 N}>0\label{eq:c1},}\\
			& c_2: = \frac{1}{2}\left((1-(1+\gamma)\eta^2)-\frac{\hat{L}^2(t)\alpha(t)}{N}\right)>\frac{(1-(1+\gamma)\eta^2)}{4}>0,\\
			&  c_3 = \frac{(1-(1+\gamma)\eta^2)(1-(1+\xi)\eta^2)}{{32} (1+1/\xi)} - {\frac{\alpha(t) \hat{L}^2(t)}{N}} \geq {\frac{(1-(1+\gamma)\eta^2)(1-(1+\xi)\eta^2)}{ 64 (1+1/\xi)} >0}.
			\end{align}
	\end{subequations}}
\end{theorem}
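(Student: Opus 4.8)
The plan is to establish a one-step descent inequality for the potential function $P(\bw^r;t)$ defined in \eqref{eq:potential} and then telescope it over the $R(t)$ inner iterations. Concretely, I would aim to prove that for every inner iteration $r$,
\begin{align*}
P(\bw^r;t) - P(\bw^{r+1};t) \ge \alpha(t)\left( c_1 \left\|\frac{\bv^{r+1}}{\alpha(t)}\right\|^2 + c_2 \|\bx^r - \b1\bbx^r\|^2 + \frac{2c_3}{N(1-(1+\gamma)\eta^2)}\|\by^r - \b1\bby^r\|^2 \right),
\end{align*}
with $c_1,c_2,c_3$ as in \eqref{eq:c}. Granting this, summing over $r=0,\dots,R(t)-1$ telescopes the potential, and since $P(\bw^{R(t)};t) > \underline{f}$ whenever $1-(1+\gamma)\eta^2>0$, the accumulated right side is bounded by $P(\bw^0;t)-\underline{f}$. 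Dividing through by $\alpha(t)R(t)$ and substituting $R(t)=1/(\epsilon\alpha(t))$, so that $\alpha(t)R(t)=1/\epsilon$, yields exactly \eqref{eq:epsilon:stationarity}.

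To obtain the one-step descent I would decompose $P$ into its three summands and control the change of each. For the objective term $f(\bbx^{r+1})$, I first note that since $\bW\b1=\b1$ the network average obeys $\bbx^{r+1}=\bbx^r+\beta\bbv^{r+1}$, and all iterates remain inside the compact ball $X(t)$ (as observed after \eqref{eq:yupdate:compact}), so the local smoothness \eqref{eq:f:lip} with constant $L(t)$ applies and a standard descent step gives
\begin{align*}
f(\bbx^{r+1}) \le f(\bbx^r) + \beta\langle \nabla f(\bbx^r), \bbv^{r+1}\rangle + \frac{L(t)\beta^2}{2}\|\bbv^{r+1}\|^2.
\end{align*}
The crucial link is the identity $\bby^r=\overline{\nabla g(\bx^r)}$ from \eqref{eq:yg} together with the optimality condition of the prox step \eqref{eq:vupdate1}, which lets me replace $\nabla f(\bbx^r)$ by $\bby^r$ at the cost of error terms controlled by the tracking error $\|\by^r-\b1\bby^r\|$ and the consensus error, the latter entering through $\|\overline{\nabla g(\bx^r)}-\nabla f(\bbx^r)\|\le \frac{\hat{L}(t)}{\sqrt{N}}\|\bx^r-\b1\bbx^r\|$.

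Second, for the consensus term $\|\bx^{r+1}-\b1\bbx^{r+1}\|^2$, I would project out the mean in $\bx^{r+1}=\bW(\bx^r+\beta\bv^{r+1})$ and invoke the contraction $\|\bW-\frac{1}{N}\b1\b1^{\T}\|<1$ from Assumption \ref{ass:algorithm}; a Young's inequality with parameter $\gamma$ produces a contraction factor $(1+\gamma)\eta^2<1$ times $\|\bx^r-\b1\bbx^r\|^2$ plus a term proportional to $\|\bv^{r+1}\|^2$. Third, for the tracking term $\|\by^{r+1}-\b1\bby^{r+1}\|^2$, the same contraction applied to $\by^{r+1}=\bW\by^r+\nabla g(\bx^{r+1})-\nabla g(\bx^r)$, combined with a Young's inequality (parameter $\xi$) and the local Lipschitz bound $\|\nabla g(\bx^{r+1})-\nabla g(\bx^r)\|\le \hat{L}(t)\|\bx^{r+1}-\bx^r\|$, produces a contraction factor $(1+\xi)\eta^2<1$ plus a term proportional to $\hat{L}^2(t)\|\bx^{r+1}-\bx^r\|^2$, which I would further expand into $\|\bv^{r+1}\|^2$ and consensus-error contributions. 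The weight $\frac{1-(1+\gamma)\eta^2}{32(1+1/\xi)\hat{L}^2(t)}$ placed in front of this term in $P$ is chosen precisely so that the $\hat{L}^2(t)$ factor cancels once this bound is folded in.

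Assembling the three bounds and collecting like terms, the cross-terms are absorbed by the free parameters $\gamma,\xi$ and by choosing $\alpha(t)$ as small as in \eqref{eq:alpha:choice:2}; this is exactly what forces the residual coefficients $c_1,c_2,c_3$ to be positive, bounded below by the explicit constants in \eqref{eq:c}. The main obstacle I anticipate is this bookkeeping: the three gap quantities are mutually coupled, since $\bv^{r+1}$ feeds both the consensus and tracking recursions and the tracking error re-enters the objective descent through the prox optimality condition, so the task is to tune the potential weights and the two Young's parameters so that every coupling term is dominated, leaving a clean sum of the three gap terms. The two branches of the bound on $\alpha(t)$ in \eqref{eq:alpha:choice:2} correspond respectively to making $c_1$ positive (the $\bigl(\hat{L}(t)/2N+(1/\gamma+5/4)\bigr)^{-1}$ branch) and to making $c_2,c_3$ positive (the $\hat{L}^2(t)$ branch), while the truncation at $1$ merely keeps $\alpha(t)$ uniformly bounded.
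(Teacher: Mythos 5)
Your proposal is correct and follows essentially the same route as the paper's own proof: the same potential function is split into the objective, consensus, and tracking terms, with the prox optimality condition and the identity $\bby^r = \overline{\nabla g(\bx^r)}$ driving the objective descent, the $\bW$-contraction combined with Young's inequalities (parameters $\gamma$ and $\xi$) controlling the other two terms, the stepsize choice \eqref{eq:alpha:choice:2} making $c_1, c_2, c_3$ positive, and telescoping with $\alpha(t)R(t)=1/\epsilon$ finishing the argument. The only (immaterial) difference is bookkeeping: you fold the bounds $\alpha(t)\le 1$ and $\alpha(t)\hat{L}^2(t)\le N(1-(1+\gamma)\eta^2)/2$ into the one-step descent before summing, whereas the paper telescopes its descent estimate first and applies these bounds after dividing by $\alpha(t)R(t)$.
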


\begin{remark}
	Let us comment on the choice of the parameters. First, from Assumption 2 we have $\eta<1$, therefore it is always possible to choose $\gamma$ and $\xi$ (independent of $t$) such that  \eqref{eq:alpha:choice:1} holds true. Then fixing $\gamma$,  $\eta$, $\hat{L}(t)$ and $L(t)$, we can find $\alpha(t)$ such that \eqref{eq:alpha:choice:2} holds true. The key relation in  \eqref{eq:alpha:choice:2} is that the stepsize $\alpha(t)$ is inversely proportional to $\hat{L}^2(t)$, and $\hL(t)$. Such a dependency will be critical in obtaining the overall convergence rate estimate. 
	
	Further, by using the choices in \eqref{eq:c}, the lhs of \eqref{eq:epsilon:stationarity} can be further lower bounded as:
	\begin{align*}
	&\frac{1}{R(t)}\sum_{r=0}^{R(t)-1}\frac{1}{{32N}} \|\bv^{r+1}/\alpha(t)\|^2 +\frac{(1-(1+\gamma)\eta^2)}{4}\|\bx^r-\b1 \bbx^r\|^2 + \frac{1-(1+\xi)\eta^2}{{32N}(1+1/\xi)}\|\by^r -\b1\bby^r\|^2.
	\end{align*}
	It is clear that the constants in front of the  three terms in the summation  are not dependent on $t$, therefore if we use the $\epsilon$-{stationarity gap} function defined in \eqref{eq:gap:constrained}, then the above inequality implies that
	\begin{align*}
	\widehat{\mbox{avg-gap}}(t)=&\frac{1}{R(t)}\sum_{r=0}^{R(t)-1}\|\bv^{r+1}/\alpha(t)\|^2 +\|\bx^r-\b1 \bbx^r\|^2 + \|\by^r -\b1\bby^r\|^2 = \mathcal{O}(\epsilon).
	\end{align*}
	This means that after running $R(t)$ iterations that satisfies $R(t) = 1/(\epsilon \times \alpha(t))$, we have reached some $\epsilon$-stationary solution for the constrained problem.
	\hfill $\blacksquare$
\end{remark}

\begin{remark}
	There have been existing analyses on applying GT to constrained consensus problem (with common constraints among the agents) \cite[Theorem 2.16]{sun2019convergence}, and they can essentially be adapted to apply to analyze the inner iterations. However, there are a number of places that we need to be careful when directly adopting these results. First, the  dependency on various problem parameters such as $L(t)$ and $L(t)^2$ have to be specified in the construction of the potential function, since later the potential functions in different stages have to be stitched together. Perhaps most importantly, the dependency of the convergence rates of the inner iteration with problem parameters such as $\hat{L}$ has to be sharpened, since these dependencies will play a critical role in finding the overall bound. Due to the above reasons, we choose to conduct a separate analysis.  
	\hfill $\blacksquare$
\end{remark}

Despite the fact that Theorem \ref{thm:1} suggests that for each stage we have $\widehat{\mbox{avg-gap}}(t) = \mathcal{O}(\epsilon)$, it is not the final result yet. The reason is that, as we have mentioned at the end of Sec. \ref{sub:optimality}, the stationarity gap measure $\widehat{\mbox{avg-gap}}(t)$ is not the same as the desired ${\mbox{avg-gap}}(t)$, unless $\tbx^{r+1}_i$ computed in \eqref{eq:vupdate1} {\it does not} touch the boundary of $X(t)$ for all $i$ and all $r\in[0, R(t)-1]$.  
Fortunately, the following result shows that, there exists an upper bound $T^*$ such that, for some  $t\in [1, T^*]$, the iteration generated in stage $t$ will not touch the boundary of $X(t)$.

\begin{theorem}\label{thm:2}
	Suppose that the assumptions made in Theorem \ref{thm:1} hold true. 
	Let $\epsilon>0$ be a fixed constant satisfying $\epsilon< 2 (1 - (1+\gamma)\eta^2)$. Suppose ${T}^*>0$ satisfies the follows:
	\begin{align}
	{T}^* & = \left\lceil \frac{1}{\epsilon}\frac{128  ({P}(\bw^0;1)-\underline{f})}{{\frac{1}{{T}^*}\sum_{t=1}^{{T}^*}(v^{t}-v^{t-1})^2}}\right\rceil.\label{eq:hatT}
	\end{align}
	Then if MAGENTA is executed by ${T}^*$ stages, there must exist $t\in [1, {T}^*]$ such that  $\{\bx^r_1(t), \cdots, \bx^r_N(t)\}_{r=1}^{R(t)}$ and $\{\tbx^r_1(t), \cdots, \tbx^r_N(t)\}_{r=1}^{R(t)}$ do not touch the boundary of $X(t)$, that is
	\begin{align}
	\mbox{\rm Dist}(\bx^r_i(t),X(t))>0, \; \mbox{\rm Dist}(\tbx^r_i(t),X(t))>0, \; \; \forall~i, \mbox{and}~r\in [0,R(t)].
	\end{align}
\end{theorem}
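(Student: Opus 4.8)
The plan is to argue by contradiction. Suppose that at \emph{every} stage $t\in[1,{T}^*]$ at least one of the iterates $\bx^r_i(t)$ or $\tbx^r_i(t)$ reaches the boundary of the ball $X(t)=B(v^t,\bz)$. I will show that each such stage must decrease the potential $P(\cdot;t)$ by a definite amount proportional to $(v^t-v^{t-1})^2$, so that summing over the $T^*$ stages produces a total decrease exceeding the available budget $P(\bw^0;1)-\underline{f}$. Since $P(\cdot;t)\ge\underline{f}$ throughout (this holds whenever $1-(1+\gamma)\eta^2>0$), this is impossible, and therefore some stage $t\in[1,{T}^*]$ cannot touch the boundary.

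First I would set up a telescoping of the potential across stages. By \thref{thm:1}, the per-stage decrease $D_t:=P(\bw^0;t)-P(\bw^{R(t)};t)$ is at least the nonnegative sum of gap terms in \eqref{eq:epsilon:stationarity}. Since the output of stage $t$ is the input of stage $t+1$, and since the coefficient $\tfrac{1-(1+\gamma)\eta^2}{32(1+1/\xi)\hL^2(t)}$ of the $\|\by-\b1\bby\|^2$ term in $P(\cdot;t)$ is nonincreasing in $t$ (the radii $v^t$ are nondecreasing, so $X(t)\subseteq X(t+1)$ and the set-monotonicity of \asref{ass:problem} gives $\hL(t)\le\hL(t+1)$), we get $P(\bw^{R(t)};t)\ge P(\bw^{R(t)};t+1)=P(\bw^0;t+1)$ at the common point. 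Chaining these inequalities across $t=1,\dots,{T}^*$ and using $P\ge\underline{f}$ yields $\sum_{t=1}^{{T}^*}D_t\le P(\bw^0;1)-\underline{f}$.

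The crux is the per-stage lemma: if the boundary is touched at stage $t$, then $D_t\ge \tfrac{\epsilon}{128}(v^t-v^{t-1})^2$. Let $r^*$ be a touching iteration, so $\|\bx^{r^*}_i(t)-\bz\|=v^t$ for some $i$, while the starting average obeys $\|\bbx^0-\bz\|\le v^{t-1}$ because $\bx_j(t)\in X(t-1)$ for all $j$ and the ball is convex. A triangle-inequality decomposition gives $v^t-v^{t-1}\le \|\bx^{r^*}-\b1\bbx^{r^*}\|+\|\bbx^{r^*}-\bbx^0\|$, so $(v^t-v^{t-1})^2$ splits into a consensus part and an average-displacement part. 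The consensus part is controlled by the $c_2\|\bx^r-\b1\bbx^r\|^2$ terms inside $D_t$, giving $\|\bx^{r^*}-\b1\bbx^{r^*}\|^2\le D_t/c_2$. The displacement part I would bound through the averaged dynamics $\bbx^{r+1}=\bbx^r+\beta\bbv^{r+1}$, Cauchy--Schwarz over the $R(t)$ inner iterations, the inequality $\|\bbv^{r+1}\|^2\le \tfrac1N\|\bv^{r+1}\|^2$, and the $c_1\|\bv^{r+1}/\alpha(t)\|^2$ terms inside $D_t$; here the schedule $R(t)=1/(\epsilon\alpha(t))$ together with $\alpha(t)\le 1$ makes the $\alpha(t)$ factors cancel and leaves one factor $\epsilon^{-1}$, so $\|\bbx^{r^*}-\bbx^0\|^2\le \tfrac{\beta^2}{N\epsilon c_1}D_t$. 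Plugging in $c_1>\tfrac1{32N}$, $c_2>\tfrac{1-(1+\gamma)\eta^2}{4}$, $\beta=\tfrac12$, and using the hypothesis $\epsilon<2(1-(1+\gamma)\eta^2)$ to dominate the $O(1)$ consensus term by the $O(\epsilon^{-1})$ displacement term, both contributions collapse into $(v^t-v^{t-1})^2\le \tfrac{128}{\epsilon}D_t$.

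Combining the two displays gives $\tfrac{\epsilon}{128}\sum_{t=1}^{{T}^*}(v^t-v^{t-1})^2\le\sum_{t=1}^{{T}^*}D_t\le P(\bw^0;1)-\underline{f}$, i.e. $\sum_{t=1}^{{T}^*}(v^t-v^{t-1})^2\le \tfrac{128(P(\bw^0;1)-\underline{f})}{\epsilon}$. On the other hand the defining relation \eqref{eq:hatT} (together with $\lceil\cdot\rceil\ge(\cdot)$) forces $\sum_{t=1}^{{T}^*}(v^t-v^{t-1})^2\ge \tfrac{128(P(\bw^0;1)-\underline{f})}{\epsilon}$, and the strict gap needed for the contradiction is supplied by the conservative constant in the per-stage lemma (the argument above in fact produces a constant smaller than $128$) or equivalently by the ceiling. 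This contradiction establishes the existence of a non-touching stage. The main obstacle is precisely the per-stage lemma of the third paragraph: turning a \emph{single} iterate touching the boundary at an arbitrary iteration $r^*$ into a \emph{quantitative} lower bound on the whole-stage decrease $D_t$. The difficulty is that the consensus error at $r^*$ is not individually small; the decomposition into a consensus part and an average-displacement part, each separately absorbed into $D_t$, is what resolves this, and careful bookkeeping of constants is required so that the $R(t)=1/(\epsilon\alpha(t))$ schedule exactly cancels the $\alpha(t)$ dependence and yields the clean $\epsilon$-scaling matching \eqref{eq:hatT}.
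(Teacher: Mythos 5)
Your overall strategy coincides with the paper's: stitch the potentials across stages via the monotonicity $\hL(t)\le\hL(t+1)$ (so $P(\bw^{R(t)};t)\ge P(\bw^0;t+1)$ at the common iterate), observe that the total descent budget is $P(\bw^0;1)-\underline{f}$, and show that every stage in which the boundary is touched forces a descent of at least $\tfrac{\epsilon}{128}(v^t-v^{t-1})^2$, so that \eqref{eq:hatT} caps the number of such stages. Where you differ is in the execution of the per-stage bound, and your version is arguably cleaner: the paper runs a nested three-case analysis (average near the boundary; some $\bx^r_i$ near the boundary given the average is not; some $\tbx^r_i$ near the boundary given neither), whereas you use the single split $v^t-v^{t-1}\le\|\bx^{r^*}-\b1\bbx^{r^*}\|+\|\bbx^{r^*}-\bbx^0\|$ and absorb the consensus piece into the $c_2$-terms and the displacement piece into the $c_1$-terms of the stage descent $D_t$. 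Your constant accounting there is correct: $2/c_2<16/\epsilon$ from $\epsilon<2(1-(1+\gamma)\eta^2)$, and $2\beta^2/(N\epsilon c_1)<16/\epsilon$ from $c_1>1/(32N)$, $\beta=1/2$, $R(t)\alpha(t)=1/\epsilon$, giving $(v^t-v^{t-1})^2\le \tfrac{32}{\epsilon}D_t$, comfortably inside the $128$ and also supplying the strictness your contradiction needs.

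However, there is a genuine gap: the theorem (and your own contradiction hypothesis) concerns both $\bx^r_i(t)$ and $\tbx^r_i(t)$, but your per-stage lemma only treats a touching iterate of the form $\|\bx^{r^*}_i-\bz\|=v^t$. A stage in which every $\bx^r_i$ stays strictly inside $X(t)$ while some projected point $\tbx^{r^*}_i$ lands on the boundary is not covered, so for such stages no descent is extracted and the counting argument breaks; as written, your proof only establishes the conclusion for $\{\bx^r_i(t)\}$, not for $\{\tbx^r_i(t)\}$. This missing case is precisely what the paper's ``Case 3'' handles. The fix is available with tools you already set up: since $\tbx^{r^*}_i-\bx^{r^*-1}_i=\bv^{r^*}_i$, write $v^t\le\|\bv^{r^*}_i\|+\|\bx^{r^*-1}_i-\bbx^{r^*-1}\|+\|\bbx^{r^*-1}-\bbx^0\|+v^{t-1}$, and bound $\|\bv^{r^*}_i\|^2\le\|\bv^{r^*}\|^2\le \alpha(t)D_t/c_1$. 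The key observation making this term harmless is that $\alpha(t)/c_1=\mathcal{O}(1)$ with the $N$'s cancelling: from \eqref{eq:alpha:choice:2} one has $\alpha(t)\le \tfrac{1}{8N}\cdot\tfrac{1}{1/\gamma+5/4}\le\tfrac{1}{10N}$, while $c_1>\tfrac{1}{32N}$, so $\|\bv^{r^*}_i\|^2\le 3.2\,D_t$; using $\epsilon<2$ this is again $\mathcal{O}(1/\epsilon)D_t$, and the resulting three-term bound $(v^t-v^{t-1})^2\le 3\bigl(3.2+1/c_2+\beta^2/(N\epsilon c_1)\bigr)D_t$ still stays below $\tfrac{128}{\epsilon}D_t$. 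With that one additional case your argument is complete and matches the theorem.
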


Combining Theorems \ref{thm:1} --  \ref{thm:2}, we arrive at the conclusion that if one can find $T^*$ that satisfies \eqref{eq:hatT}, then there must exist a stage $t\in [1, {T}^*]$ such that $\mbox{avg-gap}(t)=\mathcal{O}(\epsilon)$. The final step is to estimate the total communication and computation complexity required to achieve $\epsilon$-stationarity \eqref{eq:gap:unconstrained}. The result below is a straightforward combination of Theorems \ref{thm:1} -- \ref{thm:2}, 
\begin{theorem} \label{thm:3}
	Suppose that the assumptions made in Theorem \ref{thm:1} hold true. For a given $\epsilon>0$ satisfying $\epsilon< 2 (1 - (1+\gamma)\eta^2) $, to achieve $\epsilon$-stationary solution as defined in \eqref{eq:gap:unconstrained}, MAGENTA requires the following rounds of local gradient computation and message exchanges among the neighbors (where $T^*$ satisfies \eqref{eq:hatT}):
	\begin{align}\label{eq:total:complexity}
	\frac{1}{\epsilon} \sum_{t = 1}^{T^*} \frac{1}{\alpha(t)} = \frac{1}{\epsilon}\sum_{t=1}^{T^*}{\max\left\{4\hat{L}(t) + 8N/\gamma+10N, \frac{64 (1+1/\xi)\hat{L}^2(t)}{N(1-(1+\gamma)\eta^2)(1-(1+\xi)\eta^2)},1 \right\}}.
	\end{align}
\end{theorem}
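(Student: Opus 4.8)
The plan is to treat \thref{thm:3} as a pure bookkeeping consequence of the two preceding theorems, so that no new analytic machinery is required. First I would fix the target accuracy $\epsilon$ and recall from \thref{thm:1} that running stage $t$ for $R(t)=1/(\epsilon\alpha(t))$ inner iterations yields $\widehat{\mbox{avg-gap}}(t)=\mathcal{O}(\epsilon)$ for the \emph{constrained} problem \eqref{eq:problem:constrained}. The catch is that this constrained stationarity measure coincides with the desired \emph{unconstrained} measure $\mbox{avg-gap}(t)$ in \eqref{eq:gap:unconstrained} only when the projected iterates $\tbx^{r+1}_i$ never hit the boundary of $X(t)$, as discussed at the end of \secref{sub:optimality}. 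This is exactly where \thref{thm:2} enters: it guarantees that, after $T^*$ stages with $T^*$ chosen according to \eqref{eq:hatT}, there is at least one stage $t^\star\in[1,T^*]$ whose entire inner trajectory stays strictly interior to $X(t^\star)$. For that stage the two gap measures agree, so $\mbox{avg-gap}(t^\star)=\widehat{\mbox{avg-gap}}(t^\star)=\mathcal{O}(\epsilon)$, and an $\epsilon$-stationary solution in the sense of \eqref{eq:gap:unconstrained} has been produced.

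Next I would account for the total work. Since the algorithm cannot identify the good stage $t^\star$ ahead of time, in the worst case it must execute all stages $t=1,\dots,T^*$; and as noted immediately after the statement of the algorithm, each inner iteration \eqref{eq:vupdate1}--\eqref{eq:yupdate} costs $\mathcal{O}(1)$ local gradient evaluations and $\mathcal{O}(1)$ rounds of neighbor communication per node. Hence the total number of gradient computations and message exchanges is $\sum_{t=1}^{T^*}R(t)=\frac{1}{\epsilon}\sum_{t=1}^{T^*}\frac{1}{\alpha(t)}$, which is precisely the left-hand side of \eqref{eq:total:complexity}.

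The final step is purely algebraic: substitute the stepsize rule \eqref{eq:alpha:choice:2} and invert the minimum into a maximum. Taking reciprocals termwise gives
\begin{align*}
\frac{1}{\alpha(t)}=\max\left\{8N\Big(\frac{\hat{L}(t)}{2N}+\frac{1}{\gamma}+\frac54\Big),\ \frac{64(1+1/\xi)\hat{L}^2(t)}{N(1-(1+\gamma)\eta^2)(1-(1+\xi)\eta^2)},\ 1\right\},
\end{align*}
and simplifying the first argument via $8N\cdot\hat{L}(t)/(2N)=4\hat{L}(t)$ and $8N\cdot 5/4=10N$ turns it into $4\hat{L}(t)+8N/\gamma+10N$, matching the right-hand side of \eqref{eq:total:complexity}. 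Inserting this into $\frac{1}{\epsilon}\sum_{t=1}^{T^*}\frac{1}{\alpha(t)}$ completes the derivation.

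I do not anticipate a genuine obstacle, since the statement is explicitly flagged as a straightforward combination; the only points requiring care are (i) correctly invoking the interior-trajectory guarantee of \thref{thm:2} to bridge $\widehat{\mbox{avg-gap}}$ and $\mbox{avg-gap}$, and (ii) justifying that the complexity must be summed over \emph{all} $T^*$ stages rather than just the single good stage, because that stage is not known until after the fact. The inversion of the $\min$ into a $\max$ is routine but must be carried out termwise to reproduce the stated constants.
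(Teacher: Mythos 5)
Your proposal is correct and follows exactly the route the paper intends: Theorem \ref{thm:1} supplies the per-stage bound with $R(t)=1/(\epsilon\alpha(t))$, Theorem \ref{thm:2} supplies the interior stage $t^\star\le T^*$ on which $\widehat{\mbox{avg-gap}}$ and $\mbox{avg-gap}$ coincide, and the stated complexity is just $\sum_{t=1}^{T^*}R(t)$ with the stepsize rule \eqref{eq:alpha:choice:2} inverted termwise (your algebra $8N\cdot\hat{L}(t)/(2N)=4\hat{L}(t)$, $8N\cdot 5/4=10N$ reproduces the paper's constants exactly). The paper gives no separate proof beyond calling the result a straightforward combination of Theorems \ref{thm:1}--\ref{thm:2}, and your write-up fills in precisely those details, including the correct worst-case accounting over all $T^*$ stages.
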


At this point, how the above estimate depends on $\epsilon$ is not immediately clear. 
The precise dependency relies on algorithm parameters (e.g., how the sequence $\{v^t\}$ is chosen) and the functional class of each local function $f_i$'s  (which determines the Lipschitz constants $\hat{L}(t)$ and $L(t)$). The following result shows how to make specific choices of these parameters to yield a meaningful complexity bound, as well as how to specialize such a bound to a specific problem.

\hspace{-0.3cm}
\begin{Corollary}\label{eq:cor:poly}
	Suppose that the assumptions made in Theorem \ref{thm:1} hold true. Assume $v^t = t\times d$ for some $d>0$. Then to achieve $\epsilon$-stationary solution as defined in \eqref{eq:gap:unconstrained}, MAGENTA requires the following rounds of communication and local computation:
	\begin{align}\label{eq:complexity}
	\mathcal{O}\left(\left\lceil \frac{128  ({P}(\bw^0;1)-\underline{f})}{\epsilon{d^2}}\right\rceil  \frac{1}{\epsilon} \mbox{\rm gr}\left(d \times \left\lceil \frac{128 ({P}(\bw^0;1)-\underline{f})}{\epsilon{d^2}}\right\rceil \right)\right),
	\end{align}
	where we have defined an increasing growth function $\mbox{\rm gr}(v):=\max\{L(\mathbb{B}(v,0)), \hat{L}^2(\mathbb{B}(v,0))\}$.

	If additionally each $f_i(\bx)$ is a $Q$th order polynomial given below (with $Q\ge 2$):
	\begin{align}\label{eq:poly}
	f_i(\bx) = \sum_{w:\sum_{k=1}^{K}w_k\le Q} \sigma_{i,w} x^{w_1}_1\times \cdots \times x^{w_K}_K,
	\end{align}
	where $w =[w_1;\cdots; w_K]$, with $w_k$'s satisfying $w_k\ge 0, \;\forall~k$, $\sum_{k=1}^{K}w_k\le Q$; $\sigma_{i,w}$'s are coefficients for the polynomial. By setting $d= 1/\sqrt{\epsilon}$, the complexity in \eqref{eq:complexity} becomes  $O(1/\epsilon^{Q-1})$. 
\end{Corollary}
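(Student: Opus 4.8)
The plan is to obtain both claims by directly specializing the complexity expression \eqref{eq:total:complexity} of \thref{thm:3} under the arithmetic choice $v^t = t\,d$. First I would substitute $v^t=t\,d$ into the defining relation \eqref{eq:hatT} for $T^*$. The crucial simplification is that the increments $v^t-v^{t-1}=d$ are then all equal, so the average squared increment $\frac{1}{T^*}\sum_{t=1}^{T^*}(v^t-v^{t-1})^2 = d^2$ is \emph{independent} of $T^*$. This collapses the otherwise self-referential definition \eqref{eq:hatT} into the closed form $T^* = \lceil 128(P(\bw^0;1)-\underline{f})/(\epsilon d^2)\rceil$, which is exactly the ceiling appearing in \eqref{eq:complexity}.

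Next I would bound the sum $\sum_{t=1}^{T^*} 1/\alpha(t)$ in \eqref{eq:total:complexity}. Since each stage uses the ball $X(t)=B(v^t,\bz)$ with radius $v^t=t\,d$ increasing in $t$, the monotonicity of $L_i(\cdot)$ in Assumption~\ref{ass:problem} makes $\hat L(t)=\hat L(X(t))$ nondecreasing, hence each summand $1/\alpha(t)$ (the max expression in \eqref{eq:total:complexity}) is nondecreasing. Therefore $\sum_{t=1}^{T^*}1/\alpha(t)\le T^*/\alpha(T^*)$. Because $\hat L\ge 1$, the dominant term of $1/\alpha(T^*)$ is $\Theta(\hat L^2(T^*))$; using $X(T^*)=B(v^{T^*},\bz)\subseteq\mathbb{B}(v^{T^*}+\|\bz\|,0)$ together with the monotonicity and the definition $\mbox{\rm gr}(v)=\max\{L(\mathbb{B}(v,0)),\hat L^2(\mathbb{B}(v,0))\}$ gives $1/\alpha(T^*)=\mathcal{O}(\mbox{\rm gr}(v^{T^*}))$, after absorbing the fixed shift $\|\bz\|$ into the $\mathcal{O}(\cdot)$ (equivalently, recentering so that $\bz=0$). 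Assembling $\tfrac{1}{\epsilon}\cdot T^*\cdot\mathcal{O}(\mbox{\rm gr}(v^{T^*}))$ and substituting the closed form for $T^*$ yields \eqref{eq:complexity}.

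For the polynomial specialization I would first pin down the growth rate of the Lipschitz constants. If each $f_i$ is a $Q$th order polynomial \eqref{eq:poly}, then every entry of $\nabla^2 f_i$ is a polynomial of degree at most $Q-2$, so its magnitude over $\mathbb{B}(v,0)$ is $\mathcal{O}(v^{Q-2})$; this gives $L_i(\mathbb{B}(v,0))=\mathcal{O}(v^{Q-2})$, and likewise for $L(\mathbb{B}(v,0))$, so $\mbox{\rm gr}(v)=\mathcal{O}(v^{2Q-4})$. Now choosing $d=1/\sqrt\epsilon$ makes $\epsilon d^2 = 1$, so $T^*=\lceil 128(P(\bw^0;1)-\underline f)\rceil=\Theta(1)$ is independent of $\epsilon$, and $v^{T^*}=d\,T^*=\Theta(\epsilon^{-1/2})$. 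Then $\mbox{\rm gr}(v^{T^*})=\mathcal{O}(\epsilon^{-(Q-2)})$, and plugging into \eqref{eq:complexity} gives $\tfrac1\epsilon\cdot\Theta(1)\cdot\mathcal{O}(\epsilon^{-(Q-2)})=\mathcal{O}(\epsilon^{-(Q-1)})$, as claimed.

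I expect the main obstacle to be establishing the sharp growth exponent $\hat L(\mathbb{B}(v,0))=\mathcal{O}(v^{Q-2})$ uniformly over the ball, that is, bounding the operator norm of the Hessian of a degree-$Q$ polynomial over $\mathbb{B}(v,0)$ and checking that the exponent $Q-2$ is tight, since a loose exponent would degrade the final rate. A secondary delicate point, specific to Part~1, is justifying the closed form of $T^*$: for a general radius sequence $\{v^t\}$ the relation \eqref{eq:hatT} is implicit, and it is precisely the arithmetic choice that renders the average squared increment constant and the recursion solvable, so I would verify that the ceiling and the constant recentering shift $\|\bz\|$ do not affect the exponent in $\epsilon$.
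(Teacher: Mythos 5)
Your proposal is correct and follows essentially the same route as the paper's own proof: collapse \eqref{eq:hatT} to the closed form $T^* = \lceil 128(P(\bw^0;1)-\underline{f})/(\epsilon d^2)\rceil$ using the constant increments $v^t - v^{t-1} = d$, bound $\sum_{t=1}^{T^*} 1/\alpha(t)$ via monotonicity of $\hat L(t)$ by $T^*\cdot\mathcal{O}(\mbox{\rm gr}(v^{T^*}))$, establish $\hat L(\mathbb{B}(v,0)) = \mathcal{O}(v^{Q-2})$ for degree-$Q$ polynomials, and set $d = 1/\sqrt{\epsilon}$ so that $T^* = \Theta(1)$ and the total cost is $\frac{1}{\epsilon}\cdot\mathcal{O}(\epsilon^{-(Q-2)}) = \mathcal{O}(\epsilon^{-(Q-1)})$. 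Your added care about the ball's center $\bz$ versus the origin and the explicit Hessian-entry argument for the growth exponent are small refinements of steps the paper treats as immediate, not a different method.
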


\begin{remark} \label{discussion:cor}
	In the proof of the above result, the sequence $\{v^t\}$ is chosen as $v^t= d\times t$ where $d>0$ is some constant. We showed that when $d$ is a constant that is {\it independent of $\epsilon$}, the complexity is in the order of $\mathcal{O}(1/\epsilon^{2Q-2})$, while when $d=1/\sqrt{\epsilon}$, the order becomes $\mathcal{O}(1/\epsilon^{Q-1})$. Obviously, the latter is much better than the former, and such a comparison illustrates our previous point that the precise complexity result depends on a number of problem and algorithmic parameters, thus it should be analyzed on a case-by-case basis. 
	
	It is also important to note that for quadratic problems with $Q=2$, both the LLC and GLC hold.
	In this case, $\mathcal{O}(1/\epsilon^{Q-1})$ becomes $\mathcal{O}(1/\epsilon)$, and such a rate is {\it tight} because it matches the known rate bounds for decentralized non-convex optimization; see a summary of such results in Table \ref{fig:table_compare} and a recent survey \cite{chang2020distributed}. At this point, an open question is whether the rate derived above is tight for generic $Q$th order polynomial and more general non-convex problems satisfying Assumptions \ref{ass:problem} -- \ref{ass:algorithm}. \hfill $\blacksquare$
\end{remark}

\section{Numerical Results}\label{sec:numerics}
In this section, we present results for a few simple numerical experiments. 

\begin{figure}[!b]
	\minipage{0.33\textwidth}
	\includegraphics[width=\linewidth]{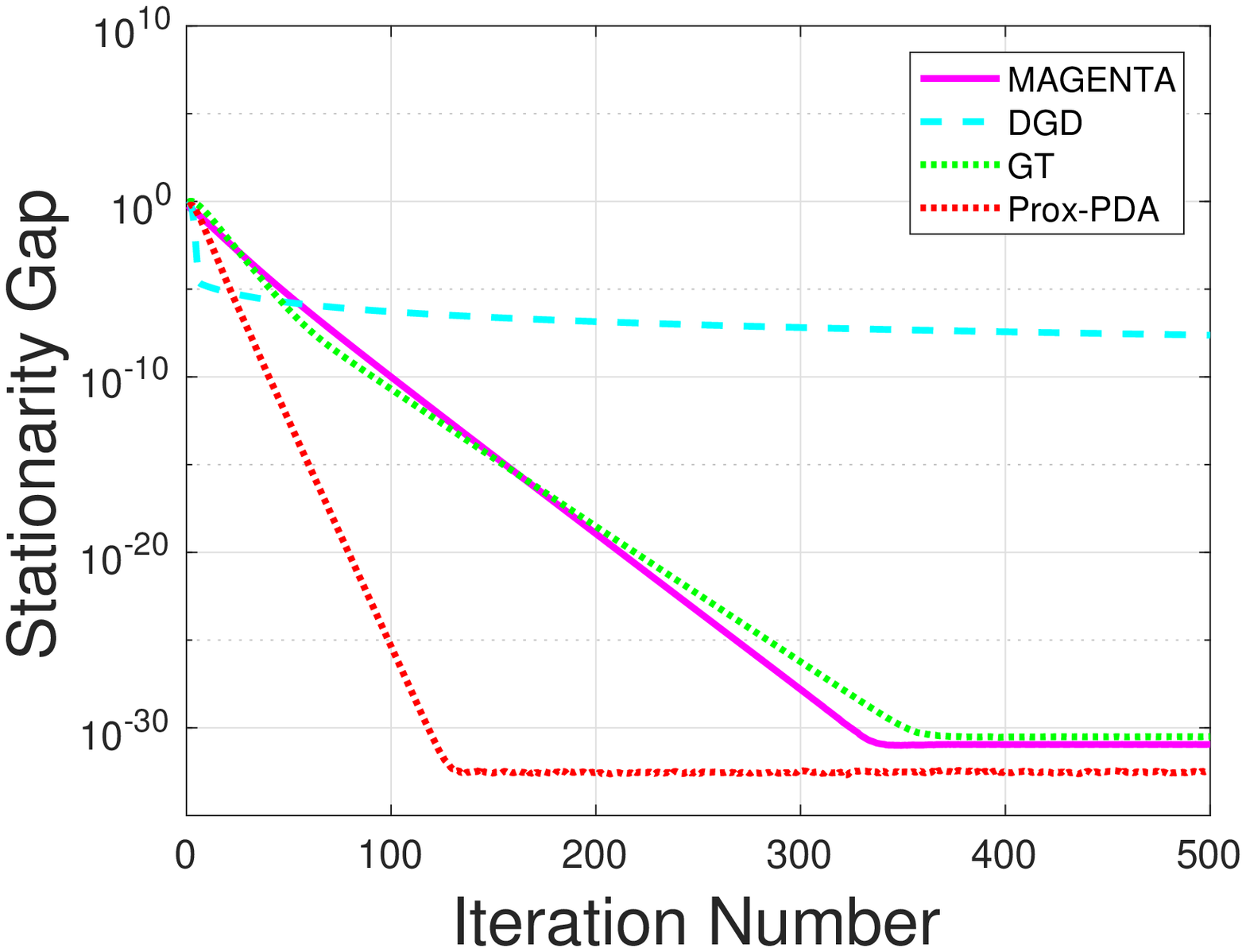}
	\endminipage\hfill
	\minipage{0.33\textwidth}
	\includegraphics[width=\linewidth]{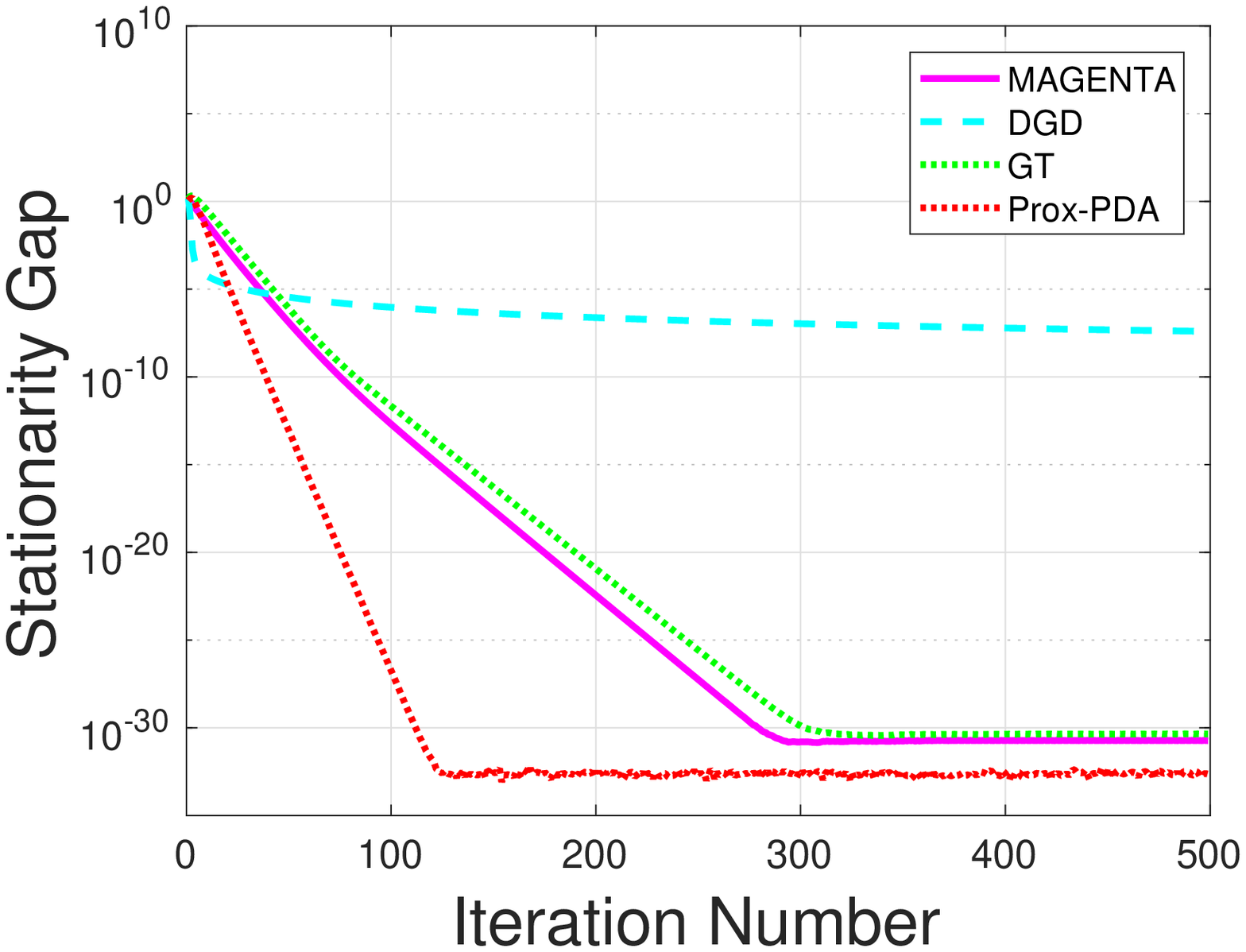}
	\endminipage\hfill
	\minipage{0.33\textwidth}%
	\includegraphics[width=\linewidth]{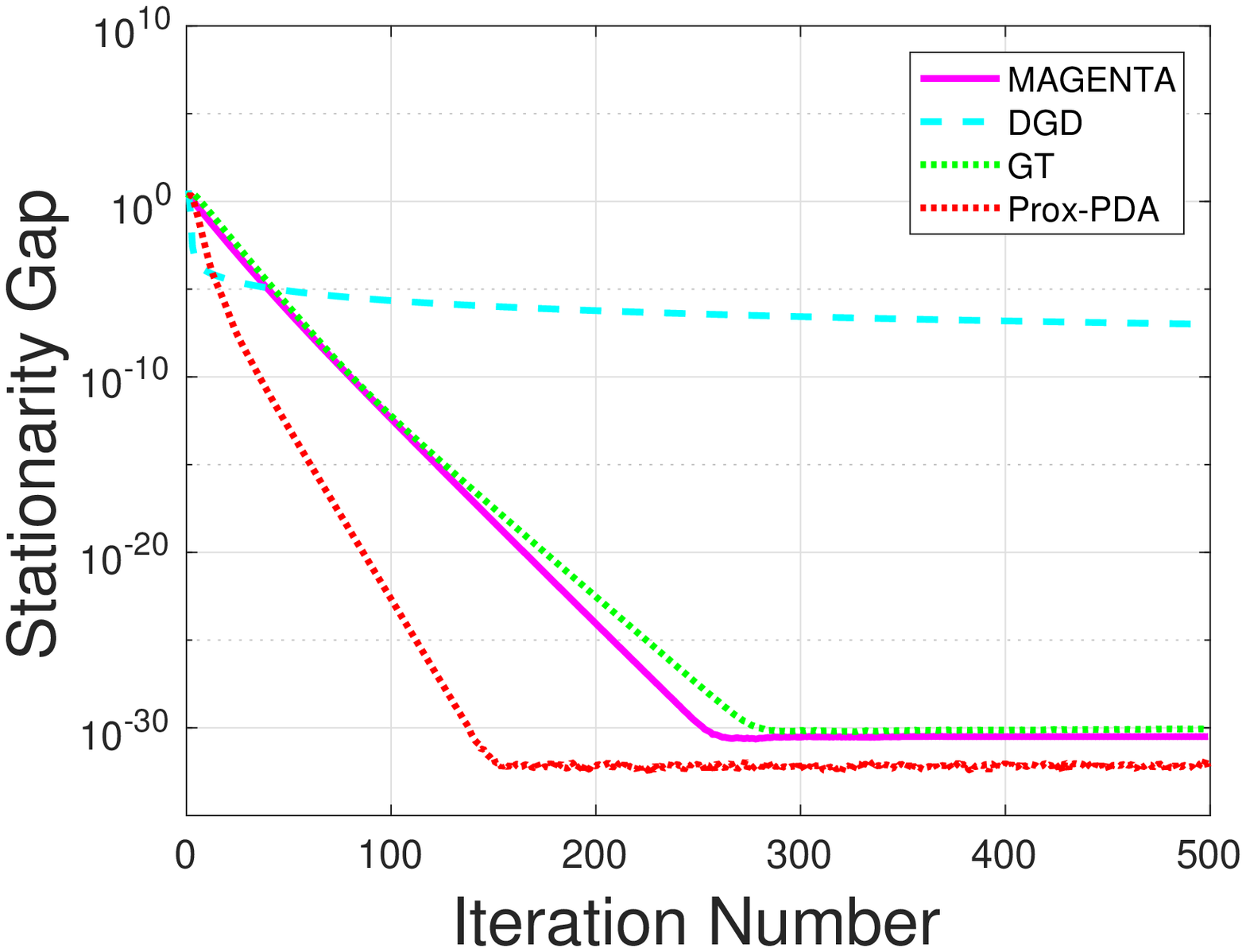}
	\endminipage
	\caption{\small Results for the logistic regression problem on several graph topologies. (Left) $n = 5$ agents and $ M_i = 400 $ data points on each agent; (Middle) $n = 10$ agents and $M_i = 200$ data points on each agent; (Right) $n = 20$ agents and $M_i = 100$ data points on each agent. The iteration number here represents the inner iteration number index, in which each node performs one round of local gradient update.} 
	\label{result:logistic_node}
\end{figure}

{\bf Experiment Set I:} First, apply MAGENTA to a typical decentralized problem which satisfies both GLC and LLC. We consider a regularized logistic regression problem with a non-convex regularizer in a distributed manner \cite{antoniadis2011penalized}. With $ \lambda, \rho > 0 $ be the regularizer's parameters, each local cost function $ f_i $ is expressed as follows:
\begin{align}\label{loss:logistic}
f_i(\bm \theta_i) = \frac{1}{M_i}\sum_{l = 1}^{M_i}\log \left( 1 + \exp(-b_i^l \bm{\theta}_i^T \bm{a}_i^l)\right) + \lambda \sum_{s = 1}^{d} \frac{\rho \theta_{i, s}^2}{1 + \rho \theta_{i, s}^2}
\end{align}
where $ \bm a_i^l $ and $ b_i^l $ are the features and the label of the $i$-th data point, and $ \bm{\theta}_i$'s are the optimization variables. Here, we set the problem dimension to be $ K = 5 $ and generate $ M = 2,000 $ data points. We equally distribute the data on each agent, where we choose $n\in \{5, 10, 20\}$ in our experiments. For each setting the graph is generated using the random geometric graph, where we place the nodes uniformly in $[0, 1]^2$ and connect any two nodes separated by a distance less than a radius $R \in (0, 1)$. The graph parameter $R$ is set to 0.5 in our simulation. We note that one can verify that problem \eqref{loss:logistic} satisfies the GLC and LLC, therefore, this experiment serves as a sanity check of the proposed algorithm. Moreover, it is also easy to calculate the local Lipschitz constant over a compact region for the gradient of the problem \eqref{loss:logistic}.

In Fig \ref{result:logistic_node}, we show the stationarity gap (as defined in the lhs of \eqref{eq:exact:1}) versus the iteration number of the Prox-GPDA \eqref{eq:pd}, Gradient Tracking \eqref{eq:gt} and DGD \eqref{eq:dgd}. In this figure, each line is the average of 5 independent runs. For the MAGENTA, we set $\epsilon=10^{-4}$ and $v^t = t\times 1/\sqrt{\epsilon}$. Moreover, the stepsize of the rest of the algorithms are also well-tuned so that they can achieve their best performance. It is clear that MAGENTA is able to quickly reduce the stationarity gap and it achieves comparable performance with GT and Prox-PDA.

{\bf Experiment Set II:} Next, we consider a simple polynomial optimization problem which satisfies neither GLC \eqref{eq:Lip:central} nor the LLC \eqref{eq:Lip:distributed}. We consider a simple example where the sum function is $ f(u) = (u-20)^4 $ and component functions are $ f_1(x_1) = \frac{1}{2} \left( x_1 - 20 \right)^4 $, $ f_2(x_2) = \frac{1}{2} \left( x_2 - 20 \right)^4 $. Then we compare performance of different algorithms.

\begin{table}[!t]
	\vspace{-2px}
	\caption{Convergence of  decentralized algorithms on a polynomial optimization problem} 
	\label{fig:experiment_summarary}
	\footnotesize
	\begin{center}
		\begin{sc}
			\begin{tabular}{lcccccccc}
				\toprule
				Algorithm  & $c=1$ & $c=1/2$ & $c=1/4$ & $c=1/8$  \\
				\midrule
				DGD \cite{zeng2018nonconvex} & $ 12.5\% $  & $ 62.3\% $ & $ 72.3\% $ & $ 96.8\% $ \\
				Gradient Tracking \cite{nedic2017achieving} & $ 13.1\% $  & $ 66.5\% $ & $ 92.0\% $ & $ 98.1\% $ \\
				Prox-PDA \cite{hong2017prox}& $ 51.1\% $  & $ 72.9\% $ & $ 85.8\% $ & $ 94.5\% $ \\
				\bottomrule
			\end{tabular}
		\end{sc}
	\end{center}
	\vspace{-1cm}
\end{table}

\begin{figure}[!b]
	\minipage{0.45\textwidth}
	\includegraphics[width=\linewidth]{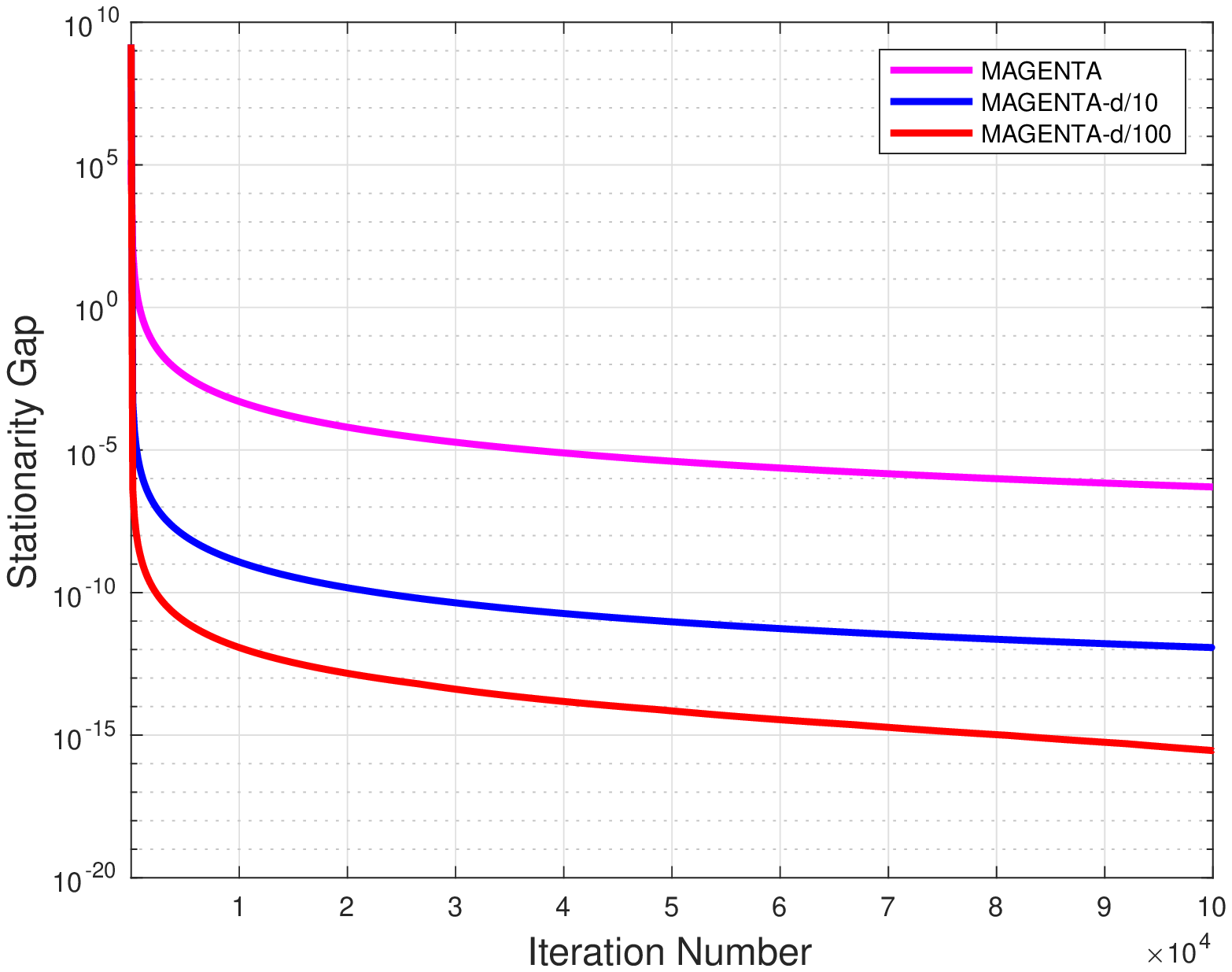}
	\endminipage\hfill
	\minipage{0.45\textwidth}
	\includegraphics[width=\linewidth]{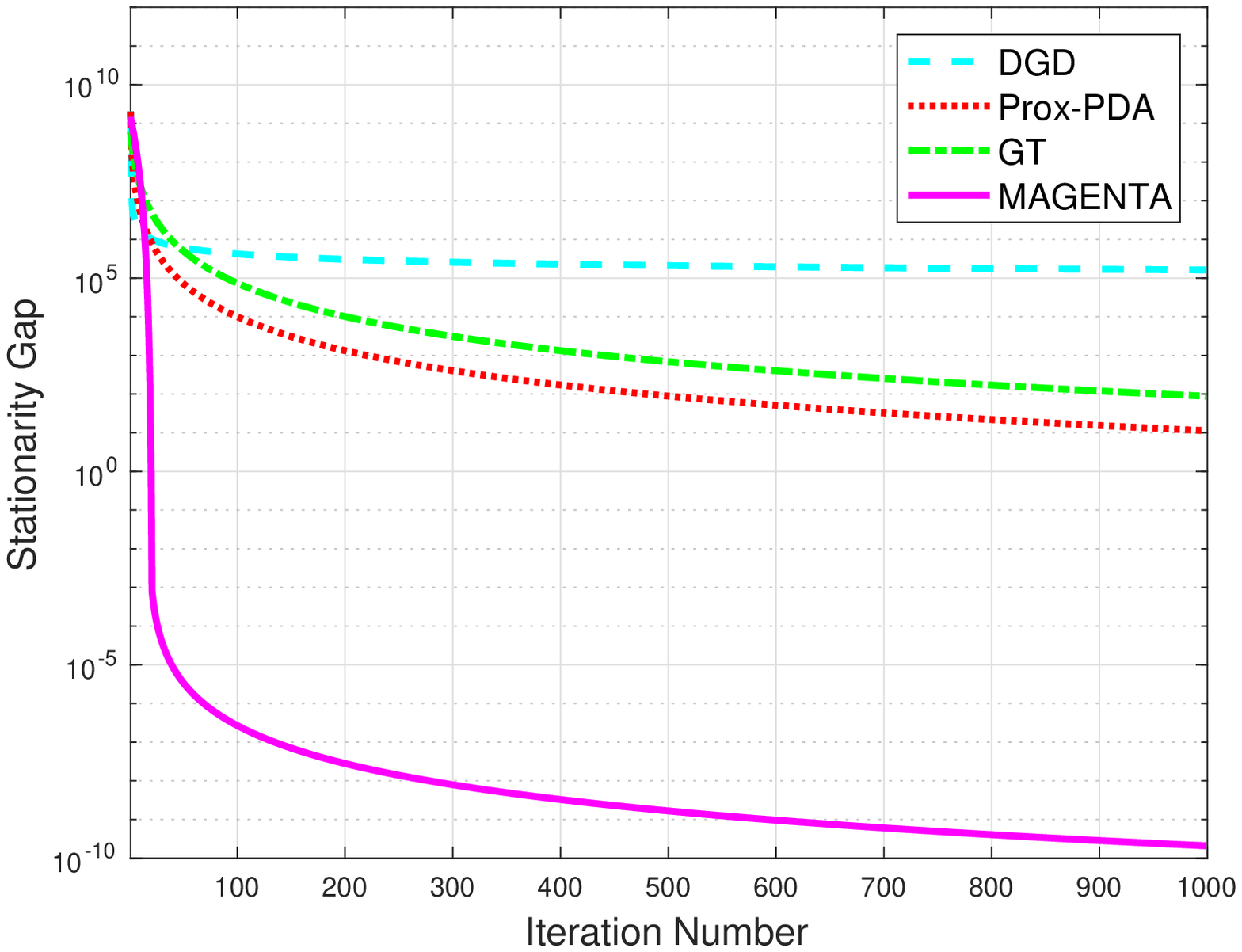}
	\endminipage
	\caption{\small Experiment Set II: a polynomial optimization problem. (Left) The performance of MAGENTA with different radius $ v^t = t \times d $
		; (Right) The comparison of different decentralized algorithms.  Each line is an average of $20$ independent runs.} 
	\label{result:polynomial}
\end{figure}

In the experiments, the initial solutions are generated from the standard Gaussian distribution. We note that for the polynomial optimization problem the existing decentralized algorithms (such as DGD, GT, etc.) cannot guarantee convergence, therefore we do not have appropriate rules to choose the stepsizes. So we try the following heuristic:  choose the stepsizes inversely proportional to the norm of the initial solutions, i.e.,  $\frac{c}{\|\bx^0\|}$, and then find the largest possible $c$ so that these algorithms can converge. 
Specifically, for DGD \cite{zeng2018nonconvex}, we set the stepsize sequence  as $\{ \alpha_r \}_{r = 1}^\infty = \{ \frac{5c \times 10^{-3}}{r \times \|\bx^0\| }  \}_{r = 1}^\infty $; for GT \cite{nedic2017achieving}, we set $ \alpha = \frac{2c \times 10^{-3}}{\|\bx^0\|}$; for Prox-PDA, we set the penalty parameter as $ \rho = \frac{10^3 \times \|\bx_0\|}{c}  $.
In Table \ref{fig:experiment_summarary}, we show the percentage of the convergent instances for each benchmark algorithms (out of $1,000$ runs). In each run, we say that a given algorithm converges if the stationarity gap reduces to below $ 10^{0} $, and we regard it as divergent if the stationarity gap exceeds $ 10^{10}$.

In the Fig. \ref{result:polynomial} (Left), we show the performance of MAGENTA with different radius sequences. Here, we set $ \epsilon = 10^{-4} $ and $ d = 1/\sqrt{\epsilon} = 100 $. Moreover, the stepsize in each stage is tuned to be inversely proportional $(\hat{L}(t))^2$ (cf. \eqref{eq:alpha:choice:2}). We also provide three kinds of radius sequence $ \{v^t\} $ to be $ v^t = t \times d = 100t $, $ v^t = t \times \frac{d}{10} = 10t $ and $  v^t = t \times \frac{d}{100} = t$. 
From these results, we observe that the performance of MAGENTA is improved when the radius is smaller,  since the allowed stepsizes are larger. 
In Fig. \ref{result:polynomial} (Right), we compare the convergence speed for each algorithm. For the benchmark algorithms (DGD, GT and Prox-PDA), we choose $c=1/4$ in order to guarantee convergence of most of the runs while ensuring that the stepsizes are as large as possible (see the third column in Table \ref{fig:experiment_summarary}). In MAGENTA, its target stationary gap $ \epsilon $ is set to be $ \epsilon = 10^{-4} $ and the radius sequence $ \{v^t\} $ is $ v^t = t \times \frac{1}{100\sqrt{\epsilon}} = t $. The stepsize of MAGENTA in each stage is also set to be inversely proportional to $ (\hat{L}(t))^2$.  Clearly, MAGENTA is the fastest  among all the tested algorithms.

{\bf Experiment Set III:} In this experiment set, we apply different algorithms to train a simple neural network. We consider a regression problem with $ N = 4 $ agents and there are $ n_i = 100$ data points in each agent $ i $. For each data point $ \{z_i, v_i\} $, the feature  $ z_i $ is a 3-dimensional vector and its label $ v_i $ is a scalar. In each agent, there is a three-layer neural network with one hidden layer and the size of the neural network is $ 3 \times 5 \times 1 $. Therefore, the component function in each agent could be expressed as follows:\begin{align}\label{nn:distributed}
f_i(\bm W^i) &=  \frac{1}{n_i}\sum_{k = 1}^{n_i} \| W_2^i \bphi(W_1^i z_k) - v_k \|^2 
\end{align}
where $ \phi(\cdot) $ denotes the ReLU activation function and $ \bm{W^i} = \{W_1^i, W_2^i\} $ is the optimization variable of the neural network in the $ i $th agent. {The dimension of the optimization variables $ W_1^i, W_2^i $ are $ 5\times3 $ and $ 1 \times 5 $.}

In our experiments, the initial weights of the neural networks are generated from Gaussian distribution and we evaluate the results for training neural networks with different initial weights. According to \eqref{nn:example:layer}, we have shown that the neural network does not  satisfy {LLC} \eqref{eq:Lip:distributed} and {GLC} \eqref{eq:Lip:central}. 
For this reason, we again follow the previous heuristic, and choose the  stepsizes for the benchmark algorithms according to the initial solutions. In Table \ref{fig:neural_network_summary}, we show the percentage of the converging instances for the benchmark algorithms (out of $100$ runs), when changing the constant $c$. 

\begin{table}[!t]
	\vspace{-1cm}
	\caption{Convergence of  decentralized algorithms for training neural network with initial weights generated from standard normal distribution.}
	\label{fig:neural_network_summary}
	\footnotesize
	\begin{center}
		\begin{sc}
			\begin{tabular}{lcccccccc}
				\toprule
				Algorithm  & $c=1$ & $c=1/2$ & $c=1/4$ & $c=1/8$  \\
				\midrule
				DGD \cite{zeng2018nonconvex} & $ 0\% $  & $ 12\% $ & $ 99\% $ & $ 100\% $ \\
				Gradient Tracking \cite{nedic2017achieving} & $ 0\% $  & $ 46\% $ & $ 99\% $ & $ 100\% $ \\
				Prox-PDA \cite{hong2017prox}& $ 0\% $  & $ 83\% $ & $ 100\% $ & $ 100\% $ \\
				\bottomrule
			\end{tabular}
		\end{sc}
	\end{center}
\end{table}

\begin{table}[h]
	\vspace{-2px}
	\caption{Convergence of  decentralized algorithms for training neural network with initial weights generated from normal distribution $ \mathcal{N}(0, 10) $.}
	\label{fig:neural_network_summary_2}
	\footnotesize
	\begin{center}
		\begin{sc}
			\begin{tabular}{lcccccccc}
				\toprule
				Algorithm  & $c=1$ & $c=1/2$ & $c=1/4$ & $c=1/8$  \\
				\midrule
				DGD \cite{zeng2018nonconvex} & $ 4\% $  & $ 53\% $ & $ 100\% $ & $ 100\% $ \\
				Gradient Tracking \cite{nedic2017achieving} & $ 1\% $  & $ 54\% $ & $ 100\% $ & $ 100\% $ \\
				Prox-PDA \cite{hong2017prox}& $ 0\% $  & $ 73\% $ & $ 100\% $ & $ 100\% $ \\
				\bottomrule
			\end{tabular}
		\end{sc}
	\end{center}
\end{table}

In  Fig. \ref{result:nn_training}, we compare the convergence speed of each algorithm when training the neural network described above. For the benchmark algorithms (DGD, GT and Prox-PDA), we choose $c=1/4$ in order to guarantee convergence and ensure that the stepsizes are as large as possible (see the third column in Table \ref{fig:neural_network_summary} and Table \ref{fig:neural_network_summary_2}). 
{For MAGENTA, we choose $ \epsilon = 10^{-2} $, $ d = \frac{1}{10\sqrt{\epsilon}} = 1 $ and a diminishing stepsize sequence, which is proportional to $1/\sqrt{t}$ (where $t$ is the stage index). Note that in this case, we are not exactly following the theoretical stepsize choices. However, the general guidelines provided in \eqref{eq:alpha:choice:2}, that the stepsizes should be decreasing with $t$, is still useful}. From this figure, we can see that MAGENTA can quickly decrease the stationarity gap, and it is the fastest among the benchmark algorithms. 

\begin{figure}[!t]
	\vspace{-0.5cm}
	\minipage{0.45\textwidth}
	\includegraphics[width=\linewidth]{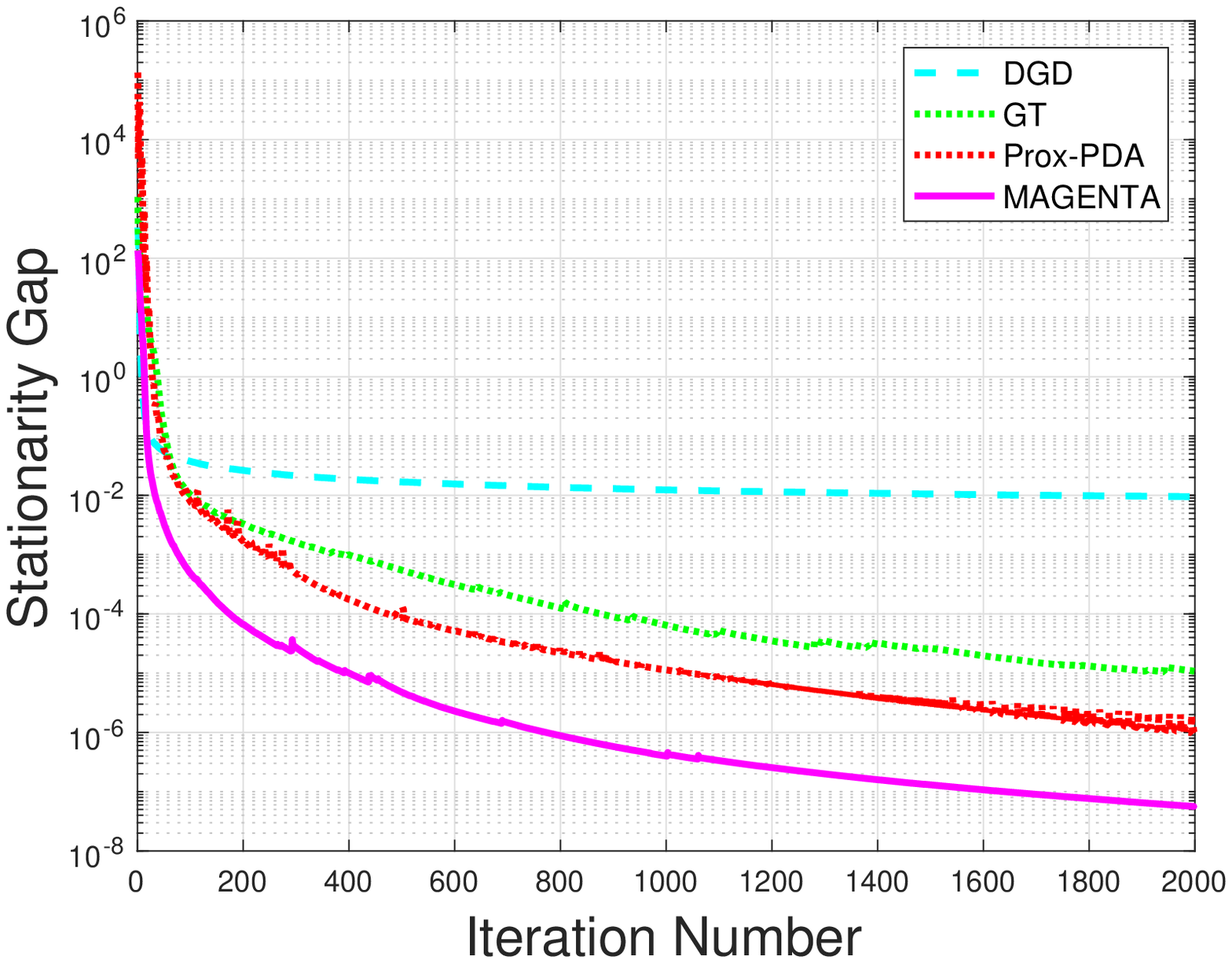}
	\endminipage\hfill
	\minipage{0.45\textwidth}
	\includegraphics[width=\linewidth]{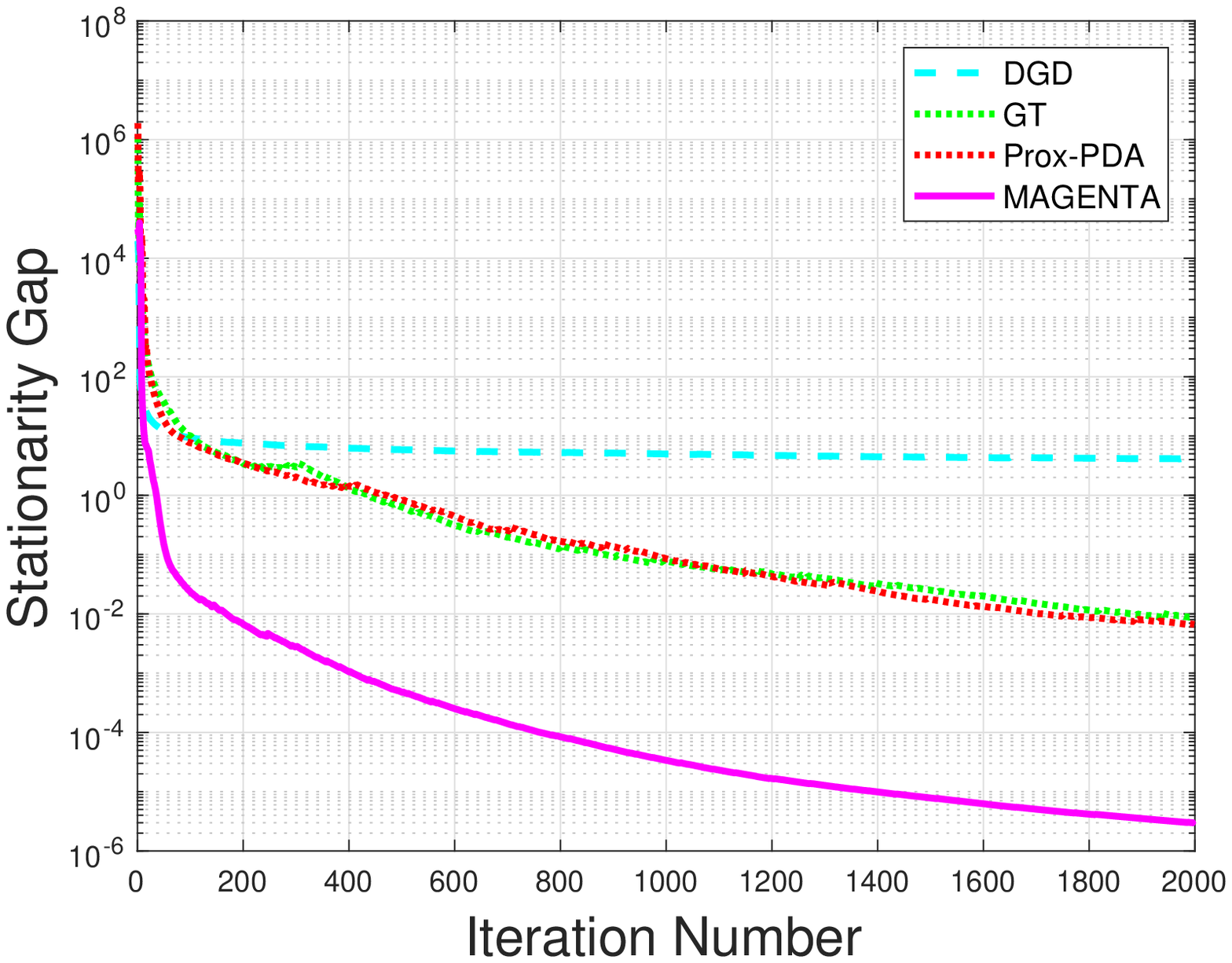}
	\endminipage
	\caption{\small Experiment Set III: Decentralized optimization for training a three-layer neural network. Each line contains an average of $10$ independent runs of the respective algorithm. (Left) The initial weights are generated from standard normal distribution; (Right) The initial weights are generated from $ \mathcal{N}(0, 10) $.} 
	\label{result:nn_training}
	\vspace{-0.4cm}
\end{figure}

\section{Concluding Remarks}
In this work, we provide in-depth understanding about decentralized optimization problems. We show that existing algorithms are critically dependent on certain Lipschitz gradient assumption on the local and the global objective functions. We then provide a novel scheme, and the accompanying analysis, which essentially removes the requirements of these assumptions. We analyze the total local gradient and communication complexity for the proposed methods, and specialize these results to a polynomial optimization problem. We expect that our approach can be extended to other first-order methods, so that they can also work without the need to have global Lipschitz constant.
Finally, our work poses the following fundamental open question: Without assuming the LLC, or even the GLC, what is the {\it best} achievable convergence rate for the class of first-order decentralized optimization algorithms?

\newpage

{\small
	\bibliographystyle{IEEEtran}
	\bibliography{ref_VR,ref_Decentralized,ref,ref_Decentralized_2020}}

\clearpage
\newpage
\appendix

\section{Proofs of Section \ref{sec:global_local}}

Let us define some notations first. Since the inner problem is solved with constraint $\bu\in X$, let us introduce the indicator function $h(\cdot): \mathbb{R}^{K}\to \mathbb{R}$ for this constraint, that is $h(\bu)=1$ if $\bu\in X$ and $h(\bu)=\infty$ otherwise. Then problems \eqref{eq:problem:central} and \eqref{eq:problem:main} can be equivalently written as
\begin{align}
\min & \; \frac{1}{N}\sum_{i=1}^{N} (f_i(\bu) + h(\bu)) = f(\bu) + h(\bu): = F(\bu)\\
\min & \; \frac{1}{N}\sum_{i=1}^{N} (f_i(\bx_i) + h(\bx_i))  = g(\bx) + \ell(\bx): = G(\bx), \quad \mbox{s.t.}~{\bx_i = \bx_j}, \; \mbox{\rm if~$(i,j)$~ are neighbors}, \label{eq:problem:main:constrained}
\end{align}
where $\ell(\bx):=\frac{1}{N}\sum_{i=1}^{N} h(\bx_i)$.

\subsection{Proof of Lemma \ref{lemma:equivalence}}
\begin{proof}\label{proof:equivalence}
	It is straightforward to show that LLC implies GLC. Further, the LLC implies that 
	\begin{align*}
	\|\nabla f(\by)- \nabla f(\bz)\|\le \frac{1}{N}\sum_{i=1}^{N}\|\nabla f_i(\by)- \nabla f_i(\bz)\|\stackrel{\eqref{eq:Lip:distributed}}\le \frac{1}{N}\sum_{i=1}^{N} L_i\|\by-\bz\|, \quad \forall~\bz, \by \in \mbox{dom}(f)
	\end{align*}
	where the first inequality comes from the Jensen's inequality. 
	Therefore we will show that for convex problems, if GLC holds then LLC holds. 
	
	First, let us assume that each $f_i$ is second-order differentiable. This condition will be removed shortly. 
	Let $H_i(\bx)\in\mathbb{R}^{K\times K}$ and $H(\bx)\in\mathbb{R}^{K\times K}$ denote the Hessian matrix for $f_i(\cdot)$, and for $f(\cdot)$, respectively. Since each $f_i$ is convex and second-order differentiable, then its Hessian matrix satisfies: $H_i(\bx)\succeq 0$. For functions in this class, we also know that  the LLC \eqref{eq:Lip:central} is equivalent to the following condition
	\begin{align*}
	\lambda_{1}(H_i(\bx))\le L_i, \; \forall~i, \; \forall~\bx,
	\end{align*}
	Similarly, if the GLC holds, then $\lambda_{1}(H(\bx))\le L, \; \forall~\bx$. Suppose that GLC holds but LLC does not hold. Then it follows that there exists $i\in[N]$ such that 
	\begin{align*}
	\lambda_{1}(H_i(\by))\ge 2 N L, \; \mbox{for some $\by$}.
	\end{align*}
	It follows that for some $j\in[N]$ and $j\ne i$, 
	\begin{align*}
	\lambda_{1}(H(\by))\ge \frac{1}{N}\lambda_{1}(H_i(\by))\ge 2L.
	\end{align*}
	This contradicts to the fact that $\lambda_{1}(H(\bx))\le L, \; \forall~\bx$. The proof is completed.

	Second, assume that each $f_i$ may not be second-order differentiable, so the Hessian matrices $H_i(\bx)$'s may not exist. From \cite[Theorem 2.1.5]{Nesterov04}, it is known that the GLC is equivalent to the following
	\begin{align}\label{eq:contradiction}
	f(\bx) \le f(\by) + \langle \nabla f(\by), \bx-\by\rangle + \frac{L}{2}\|\bx-\by\|^2,\; \forall~\by,\bx,
	\end{align}
	and similar equivalence holds for each component function $f_i$'s, under the $LLC$. 
	
	Suppose that there exists $j\in[N]$ such that LLC \eqref{eq:Lip:distributed} does not hold. Then for {\it any} $L'>0$, there exists $\bx,\by$ such that the following condition {\it does not} hold. 
	\begin{align*}
	f_j(\bx) \le  f_j(\by) + \langle \nabla f_j(\by), \bx-\by\rangle + \frac{L'}{2}\|\bx-\by\|^2, \; \mbox{for all $\bx, \by$}.
	\end{align*}
	It follows that for a given index $j\in[N]$,  there exists $\tbx,\tby$ such that
	\begin{align}\label{eq:j}
	f_j(\tbx) \ge f_j(\tby) + \langle \nabla f_j(\tby), \tbx-\tby\rangle +  {N}{L}\|\tbx-\tby\|^2, \; \mbox{for some $\tbx, \tby$}.
	\end{align}
	Since all $f_i$'s are convex, we have, for the same pair of vectors $\tbx, \tby$, the following holds
	\begin{align}\label{eq:i}
	f_i(\tbx) \ge f_i(\tby) + \langle \nabla f_i(\tby), \tbx-\tby\rangle, \; \forall~i\in[N].
	\end{align}
	Adding the realtions in \eqref{eq:i} for all $i\ne j$, then add the relation \eqref{eq:j}, and dividing both sides by $N$, we obtain
	\begin{align*}
	\frac{1}{N}\sum_{i=1}^{N} f_i(\tbx) & \ge  \frac{1}{N}\sum_{i=1}^{N} f_i(\tby) + \frac{1}{N}\left\langle \nabla \left(\sum_{i=1}^{N} f_i(\tby)\right), \tbx-\tby\right\rangle + L\|\tbx-\tby\|^2 \nonumber\\
	& =  f(\tby) + \left\langle \nabla f(\tby), \tbx-\tby\right\rangle + {L}\|\tbx-\tby\|^2, \; \mbox{for some $\tbx, \tby$}.
	\end{align*}
	This is a contradiction to \eqref{eq:contradiction}. The desired result is proved.
\end{proof}

\subsection{Proof of Claim \ref{claim:dgd}}
\begin{proof} \label{proof:dgd}
	Consider problem \eqref{eq:problem:main} with $ K= 1$ (scalar variables), and assume that we have  $N= 2$ agents connected via one edge. Assume that the mixing matrix is chosen as  $\displaystyle \bm W = \begin{bmatrix}
	1/2 & 1/2 \\
	1/2 & 1/2
	\end{bmatrix}$. Let us first assume that the DGD iteration uses a constant stepsize $\alpha>0$. 
	
	Let us assume the following:
	\begin{align}\label{eq:counter:1}
	f_1(x_1) = \frac{1}{3} x_1^3, \quad  f_2(x_2) = -\frac{1}{3}x_2^3.
	\end{align}
	Clearly the LLC is not satisfied, while the GLC  is, because $ f_1(y) + f_2(y) = 0 $. It is also easy to observe that, any solutions satisfying $x_1=x_2$ are optimal for the decentralized  problem \eqref{eq:problem:main}. 
	
	Let us then specialize the DGD using the above choices of the parameters, and use it to solve problem specified in \eqref{eq:counter:1}.  We will have the following iteration: 
	\[ \bx^{r+1} = \bm W\bx^r - \alpha \nabla g(\bx^r) =  \b1 \times \bar{x}^r - \alpha \times \begin{bmatrix}
	(x^r_{1})^2 \\ 
	-(x^r_{2})^2\\
	\end{bmatrix}, \]
	where the $g(\cdot)$ function is defined in \eqref{eq:problem:main}, and $\nabla g(\bx):=[\nabla f_1(x_1); \nabla f_2(x_2)].$
	
	For a given $r\ge 0$, let us assume that $x^r_{2}-x^r_{1} \geq \frac{2}{\alpha} $ and $\bar{x}^r > 0 $, then we have
	\begin{align}
	&(x^{r+1}_{2} - x^{r+1}_{1}) - (x^r_{2} - x^r_{1}) \nonumber\\
	&= \alpha\left((x^r_{2})^2 + (x^r_{1})^2\right)- (x^r_{2} - x^r_{1})\nonumber\\
	&= \alpha\left(2(\bar{x}^{r})^2 + \frac{(x^r_{2}-x^r_{1})^2}{2} - \frac{1}{\alpha}(x^r_{2} - x^r_{1})\right) \nonumber\\
	&= \alpha\left(2(\bar{x}^{r})^2 + \frac{(x^r_{2}-x^r_{1})}{2}(x^r_{2}-x^r_{1} - \frac{2}{\alpha}) \right) \nonumber\\
	&\geq 2\alpha (\bar{x}^r)^2. \label{eq:difference:large}
	\end{align}
	For the sequence $ \{\bar{x}^r\}_{r= 0}^\infty $, the following holds true
	\begin{align}\label{eq:average:large}
	\bar{x}^{r+1} &= \frac{1}{2} \b1^T \left(\bm W\bx^r - \alpha \nabla g(\bx^r)\right) = \bar{x}_k - \frac{\alpha}{2}\b1^T \nabla g(\bx^k)\nonumber\\
	\bar{x}^{r+1} - \bar{x}^r & = \frac{\alpha}{2}(x^r_{2} - x^r_{1})(x^r_{2} + x^r_{1}) > 0.	
	\end{align}
	Summarizing \eqref{eq:difference:large} and \eqref{eq:average:large}, and suppose that the initial solution $\bx^0$ satisfies 
	\begin{align}\label{eq:initial}
	\bar{x}^0 > 0, \quad  x^0_{2} - x^0_{1} \geq \frac{2}{\alpha}.
	\end{align}
	we then obtain:
	\begin{align*}
	x^{r+1}_{2}-x^{r+1}_{1} \geq \frac{2}{\alpha}, \;\; x^{r+1}_{2}-x^{r+1}_{1} > x^r_{2} - x^r_{1} \geq \frac{2}{\alpha}, \;\;  \bar{x}^{r+1} > \bar{x}^r > 0, \; \forall~r\ge 0.
	\end{align*}
	
	\[  x^{r+1}_{2} - x^{r+1}_{1} \geq (x^0_{2} - x^0_{1}) + 2\alpha \sum_{t= 0}^{r}(\bar{x}^t)^2 \geq \frac{2}{\alpha} + 2\alpha (r+1) (\bar{x}^0)^2.\]
	In summary, for any given positive $\alpha>0$, there exists an initial solution $\bx^0$ satisfying \eqref{eq:initial}, so that
	the sequence $ \{\bar{x}^r\}^\infty_{r = 0}$ as well as the consensus error $\{x^r_{2} - x^r_{1}\}_{r = 0}^\infty$ diverges to infinity. 
	This completes the first part of the claim. 
	
	{Then let us assume that $\alpha^r$ is a diminishing sequence satisfying \eqref{eq:stepsize}.} Now we specialize the DGD algorithm and consider the decreasing stepsize $ \alpha^r := \frac{\alpha}{r+1} > 0$. For a given $r\ge 0$, let us assume that $x^r_{2}-x^r_{1} \geq \frac{2}{\alpha^r} = \frac{2(r+1)}{\alpha} $ and $\bar{x}^r > 0 $, then we have
	\begin{align}
	&(x^{r+1}_{2} - x^{r+1}_{1}) - (x^r_{2} - x^r_{1}) \nonumber\\
	&= \alpha^r \left((x^r_{2})^2 + (x^r_{1})^2\right)- (x^r_{2} - x^r_{1})\nonumber\\
	&= \alpha^r \left(2(\bar{x}^{r})^2 + \frac{(x^r_{2}-x^r_{1})^2}{2} - \frac{1}{\alpha^r}(x^r_{2} - x^r_{1})\right) \nonumber\\
	&= \alpha^r \left(2(\bar{x}^{r})^2 + \frac{(x^r_{2}-x^r_{1})}{2}(x^r_{2}-x^r_{1} - \frac{2}{\alpha^r}) \right) \nonumber\\
	&\geq 2\alpha^r (\bar{x}^r)^2. \label{eq:difference:large2}
	\end{align}
	
	For the sequence $ \{\bar{x}^r\}_{r= 0}^\infty $, the following holds true
	\begin{align}\label{eq:average:large2}
	\bar{x}^{r+1} &= \frac{1}{2} \b1^T \left(\bm W\bx^r - \alpha^r \nabla g(\bx^r)\right) = \bar{x}_k - \frac{\alpha^r}{2}\b1^T \nabla g(\bx^r)\nonumber\\
	\bar{x}^{r+1} - \bar{x}^r & = \frac{\alpha^r}{2}(x^r_{2} - x^r_{1})(x^r_{2} + x^r_{1}) > 0.	
	\end{align}
	
	Let us further assume $ \alpha^r (\bar{x}^r)^2 \geq \frac{1}{\alpha} $, then it is obtained that 
	\begin{equation}
	(x^{r+1}_{2} - x^{r+1}_{1}) \geq (x^r_{2} - x^r_{1}) + 2\alpha^r (\bar{x}^r)^2 \geq \frac{2(r+1)}{\alpha} + \frac{2}{\alpha} = \frac{2(r+2)}{\alpha} = \frac{2}{\alpha^{r+1}} \label{eq:dgd:proveassumption1}
	\end{equation}
	Furthermore, we can show that 
	\begin{align}
	(\bar{x}^{r+1})^2 &= \left( \bar{x}^r + \alpha^r \bar{x}^r (x^r_{2} - x^r_{1}) \right)^2 \nonumber \\
	&= (\bar{x}^r)^2 + (\alpha^r)^2 (\bar{x}^r)^2 (x^r_{2} - x^r_{1})^2 + 2\alpha^r (\bar{x}^r)^2(x^r_{2} - x^r_{1}) \nonumber\\
	&\geq \frac{r+1}{\alpha^2} + \frac{4(r+1)}{\alpha^2} + \frac{4(r+1)}{\alpha^2} \nonumber \\
	&= \frac{9(r+1)}{\alpha^2} \label{eq:dgd:proveassumption2}.
	\end{align}
	Summarizing \eqref{eq:difference:large2}--\eqref{eq:dgd:proveassumption2}, and suppose that the initial solution $ \bm x^0 $ satisfies 
	\begin{align}
	x_2^0 - x_1^0 \geq \frac{2}{\alpha^0}, \quad \alpha^0 (\bar{x}^0)^2 \geq \frac{1}{\alpha}, \quad \bar{x}^0 > 0, \label{eq:dgd:initial2}
	\end{align} we obtained that 
	\begin{align*}
	x_2^{r+1} - x^{r+1}_1 \geq \frac{2}{\alpha^{r+1}} = \frac{2(r+2)}{\alpha},\quad \alpha^{r+1} (\bar{x}^{r+1})^2 \geq \frac{1}{\alpha}, \quad \bar{x}^{r+1} > \bar{x}^r >0.
	\end{align*}
	In summary, if we replace the constant stepsize to the diminishing stepsizes, then there exists a sequence that satisfies the condition \eqref{eq:stepsize} and an initial solution $ \bm x^0 \in \mathbb{R}^K$, so that the DGD iteration diverges.
\end{proof}

\subsection{Proof of Claim \ref{claim:gt}}
\begin{proof} \label{proof:gt}
	Consider the problem \eqref{eq:counter:1}. Assume the mixing matrix is chosen as $ \bm W = \begin{bmatrix}
	1/2 & 1/2 \\
	1/2 & 1/2
	\end{bmatrix} $, then we will have the following iteration:
	\begin{align}
	\bm x^{r+1} &= 2\bm{W} \bm x^r - \bm{W}^2 \bm x^{r-1} - \alpha\left(\nabla g(\bm x^r) - \nabla g(\bm x^{r-1})\right) \nonumber \\
	& = \bm 1 \times 2 \bar{x}^r - \bm 1 \times \bar{x}^{r-1} - \alpha\left( \begin{bmatrix}
	(x_1^r)^2\\
	-(x_2^r)^2
	\end{bmatrix} - \begin{bmatrix}
	(x_1^{r-1})^2 \\
	-(x_2^{r-1})^2
	\end{bmatrix} \right) \label{eq:GT:update}.
	\end{align}
	
	For a given $ r \geq 0 $, let us assume that 
	\begin{subequations}\label{eq:condition1}
		\begin{align}
		& x_2^r - x_{1}^r \geq x_{2}^{r-1} - x_{1}^{r-1} \geq x_{2}^{r-2} - x_{1}^{r-2} \geq 0 \\
		& (x_2^r - x_1^r) - (x_2^{r-1} - x_1^{r-1}) - \frac{2}{\alpha} \geq 0.
		\end{align}
	\end{subequations}

	According to \eqref{eq:GT:update}, then we obtain
	\begin{align}
	&(x_{2}^{r+1} - x_{1}^{r+1}) - (x_{2}^r - x_{1}^r) \nonumber\\
	=&\alpha\left( (x_2^r)^2 + (x_1^r)^2 - (x_2^{r-1})^2 - (x_1^{r-1})^2 - \frac{1}{\alpha} (x_{2}^r - x_{1}^r) \right) \nonumber\\
	=& \alpha \left( 2(\bar{x}^r)^2 - 2(\bar{x}^{r-1})^2 + \frac{(x_{2}^r - x_{1}^r)^2}{2} - \frac{(x_{2}^{r-1} - x_{1}^{r-1})^2}{2}- \frac{1}{\alpha} (x_{2}^r - x_{1}^r)  \right) \nonumber\\
	=& \alpha \left( 2(\bar{x}^r)^2 - 2(\bar{x}^{r-1})^2 + \frac{(x_{2}^r - x_{1}^r)}{2}(x_{2}^r - x_{1}^r - \frac{2}{\alpha}) - \frac{(x_{2}^{r-1} - x_{1}^{r-1})^2}{2}  \right) \nonumber\\
	\geq& \alpha \left( 2(\bar{x}^r)^2 - 2(\bar{x}^{r-1})^2 + \frac{(x_{2}^{r-1} - x_{1}^{r-1})}{2}(x_{2}^r - x_{1}^r - \frac{2}{\alpha}) - \frac{(x_{2}^{r-1} - x_{1}^{r-1})^2}{2}  \right) \nonumber\\
	=&\alpha \left(2(\bar{x}^r)^2 - 2(\bar{x}^{r-1})^2 + \frac{(x^{r-1}_{2} - x^{r-1}_{1})}{2}\left((x_{2}^r - x_{1}^r) - (x_{2}^{r-1} - x_{1}^{r-1}) - \frac{2}{\alpha}\right) \right) \nonumber\\
	\geq& 2\alpha\left( (\bar{x}^r)^2 - (\bar{x}^{r-1})^2 \right) \label{eq2:difference:large}
	\end{align} 
	where the first equality is obtained by subtracting $\bm x^r$ and multiplying $ [-1, 1]^T $ on the both sides of \eqref{eq:GT:update}; {the second equality is by the formula $ (x_2^r)^2 + (x_1^r)^2 = \frac{(x_2^r + x_1^r)^2}{2} + \frac{(x_2^r - x_1^r)^2}{2} = 2(\bar{x}^r)^2 + \frac{(x_2^r - x_1^r)^2}{2} \text{ } \forall r$.}
	
	The sequence $ \{ (\bar{x}^r)^2 - (\bar{x}^{r-1})^2 \}_{r=1}^\infty $ can be expressed as $ \{(\bar{x}^r - \bar{x}^{r-1})(\bar{x}^r + \bar{x}^{r-1})\}_{r=1}^\infty $. Moreover, we have
	\begin{align}
	\bar{x}^{r} &= \frac{1}{2}\bm{1}^T \left[2\bm{W} \bm x^{r-1} - \bm{W}^2 \bm x^{r-2} - \alpha\left(\nabla g(\bm x^{r-1}) - \nabla g(\bm x^{r-2})\right) \right] \nonumber \\
	&= 2\bar{x}^{r-1} - \bar{x}^{r-2} - \frac{\alpha}{2}\bm 1^T \left(\nabla g(\bm x^{r-1}) - \nabla g(\bm x^{r-2})\right). \label{eq:GT:mean-difference}
	\end{align}
	Hence, it is obtained from \eqref{eq:GT:mean-difference} so that 
	\begin{align}
	&(\bar{x}^r - \bar{x}^{r-1}) - (\bar{x}^{r-1} - \bar{x}^{r-2}) \nonumber \\
	=& \frac{\alpha}{2}\left( (x_2^{r-1})^2 - (x_1^{r-1})^2 - (x_2^{r-2})^2 + (x_1^{r-2})^2 \right) \nonumber \\
	=&\frac{\alpha}{2}\left( (x_{1}^{r-1}+x_{2}^{r-1})(x_{2}^{r-1} - x_{1}^{r-1}) - (x_{1}^{r-2}+x_{2}^{r-2})(x_{2}^{r-2} - x_{1}^{r-2}) \right) \nonumber \\
	=& \alpha \left( \bar{x}^{r-1}(x_{2}^{r-1} - x_{1}^{r-1}) - \bar{x}^{r-2} (x_{2}^{r-2} - x_{1}^{r-2}) \right) \label{eq2:difference:mean}.
	\end{align}
	Let us further assume 
	\begin{align}\label{eq:condition2}
	\bar{x}^{r-1} \geq \bar{x}^{r-2} \geq 0,  \; (\bar{x}^{r-1})^2 - (\bar{x}^{r-2})^2 \geq \frac{1}{\alpha^2}.
	\end{align}
	Summarizing \eqref{eq2:difference:large} and \eqref{eq2:difference:mean}, we then obtain that, under the conditions \eqref{eq:condition1} and \eqref{eq:condition2}, the following relations hold: 
	\begin{subequations}\label{eq:GT:result}
		\begin{align}
		& (\bar{x}^{r} - \bar{x}^{r-1}) \geq (\bar{x}^{r-1} - \bar{x}^{r-2}) \geq 0 \label{eq:GT:R1}\\
		& (\bar{x}^r)^2 - (\bar{x}^{r-1})^2 \geq (\bar{x}^{r-1})^2 - (\bar{x}^{r-2})^2 \geq \frac{1}{\alpha^2} \label{eq:GT:R2}\\
		& (x_{2}^{r+1} - x_{1}^{r+1}) - (x_{2}^r - x_{1}^r) - \frac{2}{\alpha} \geq 2\alpha((\bar{x}^r)^2 - (\bar{x}^{r-1})^2 - \frac{1}{\alpha^2}) \geq 0. \label{eq:GT:R3}
		\end{align}
	\end{subequations}
	
	Given a initial solution $ \bm x^{-2} $ and the stepsize $ \alpha $, we can generate $ \bm x^{-1}, \bm x^{0}, \bm x^1 $. Here, $ \bm x^{-1} = \bm W \bm x^{-2} - \alpha \nabla g(\bm x^{-2})$ and other iterations follow \eqref{eq:GT:update}. Suppose the initial solutions satisfy (take $ r = 1 $):
	
	\begin{subequations}\label{eq.GT.assumption}
		\begin{align}
		& x_{2}^1 - x_{1}^1 \geq x_{2}^{0} - x_{1}^{0} \geq x_{2}^{-1} - x_{1}^{-1} \geq 0 \label{eq:GT:A1} \\
		& (x_{2}^1 - x_{1}^1) - (x_{2}^{0} - x_{1}^{0}) - \frac{2}{\alpha} \geq 0 \label{eq:GT:A2} \\
		& \bar{x}^{0} \geq \bar{x}^{-1} \geq 0 \label{eq:GT:A3} \\
		& (\bar{x}^{0})^2 - (\bar{x}^{-1})^2 \geq \frac{1}{\alpha^2} \label{eq:GT:A4}.
		\end{align}
	\end{subequations}
	
	We then obtain:
	\begin{align*}
	& x_{2}^{r+1} - x_{1}^{r+1} \geq x_{2}^{r} - x_{1}^{r} \geq x_{2}^{r-1} - x_{1}^{r-1} \geq 0\\
	& (x_{2}^{r+1} - x_{1}^{r+1}) - (x_{2}^{r} - x_{1}^{r}) - \frac{2}{\alpha} \geq 0\\
	& \bar{x}^{r} \geq \bar{x}^{r-1} \geq 0 \\
	& (\bar{x}^{r})^2 - (\bar{x}^{r-1})^2 \geq \frac{1}{\alpha^2}.
	\end{align*}
	
	Given $ \bm x^{-2} = (\frac{1}{\alpha}, \frac{1}{\alpha}) $ and a fixed stepsize $ \alpha $, then it is obtained that $ \bm x^{-1} = (0, \frac{2}{\alpha}), \bm x_0 = (\frac{2}{\alpha}, \frac{4}{\alpha}), \bm x^1 = (\frac{1}{\alpha}, \frac{17}{\alpha}) $. For $r = 1$, the points $ \bm x^{-1}, \bm x^0, \bm x^1 $ satisfy the four assumptions above. The sequence $ \{x_{2}^r - x_{1}^r\}_{r = 1}^\infty $ diverges to infinity since $  x_{2}^{r+1} - x_{1}^{r+1} \geq (x_{2}^1 - x_{1}^1) + \frac{2r}{\alpha} = \frac{2r+16}{\alpha} $.
	
	In summary, for any given positive $\alpha>0$, there exists an initial solution $\bx^0$ satisfying \eqref{eq:GT:A1} -- \eqref{eq:GT:A4}, so that the sequences $ \{\bar{x}^r\}_{r=1}^\infty$ and $\{x^r_{2} - x^r_{1}\}_{r = 1}^\infty$ diverge to infinity. 
\end{proof}

\subsection{Proof of Claim \ref{claim:pd}}
\begin{proof} \label{proof:pd}
	Consider the same problem \eqref{eq:counter:1}. The degree matrix $ D $ is an identity matrix and the graph incidence matrix $ \bm{A}:= [1,\; -1] $. Therefore, the largest eigenvalue of the graph Laplacian matrix $ \bm{A}^T \bm A $ is $ \lambda_{\max}(\bm A^T \bm A) = 2 $. According to the update \eqref{eq:pd} with the penalty parameter $ \rho $ and the regularization parameter $ \beta = 0 $, the iteration of the parameter $ \bm x $ are expressed as follows:
	\[ \bm x^{r+1} = \left( I -\frac{1}{2} \bm{A}^T \bm A \right)(2\bm x^r - \bm x^{r-1}) - \frac{1}{2\rho} \left(\nabla g(\bm x^r) - \nabla g(\bm x^{r-1})\right). \]
	Set $ \alpha = \frac{1}{2\rho} $ and denote $ \bm W := I -\frac{1}{2} \bm{A}^T \bm A = \begin{bmatrix}
	1/2 &1/2\\
	1/2 &1/2
	\end{bmatrix} $, then we can rewrite the expression above as 
	\begin{align*}
		x_{k+1} &= \bm W (2\bm x^r - \bm x^{r-1}) - \alpha \left(\nabla g(\bm x^r) - \nabla g(\bm x^{r-1})\right) \\
		&= 2\bm W \bm x^r - \bm W^2 \bm x^{r-1} - \alpha\left( \nabla g(\bm x^r) - \nabla g(\bm x^{r-1}) \right)
	\end{align*}
	where in the second equality we use $ \bm W = \bm W \bm W $. Then the analysis of this algorithm follows similar steps as that for gradient tracking.
	
	According to the iterations \eqref{eq:pd:iteration1}--\eqref{eq:pd:iteration2} and given the initial solution $ \bm x^{-2} $ at the first step, we can express the update of $ \bm x $ as $$ \bm x^{-1} = (I - \frac{1}{2} \bm A^T \bm A)\bm x^{-2}  - \alpha \nabla g(\bm x^{-2}) = \bm W \bm x^{-2} - \alpha \nabla g(\bm x^{-2}) $$ since the the dual variable $ \bm \lambda^{-2} = 0 $ at the first iteration. Then the Prox-GPDA is equivalent to the gradient tracking for the problem \eqref{eq:counter:1}.
	
	Given $ \bm x^{-2} = (\frac{1}{\alpha}, \frac{1}{\alpha}) $, then it is obtained that $ \bm x^{-1} = (0, \frac{2}{\alpha}), \bm x^0 = (\frac{2}{\alpha}, \frac{4}{\alpha}), \bm x^1 = (\frac{1}{\alpha}, \frac{17}{\alpha}) $. For this case, it is shown that Prox-GPDA also diverges to infinity according to the divergence of gradient tracking.
	
	In summary, for any penalty parameter $\rho >0$, there exists an initial solution, so that the sequence $ \{\bar{x}^r\}_{r=1}^\infty$ as well as the consensus error $\{x^r_{2} - x^r_{1}\}_{r = 1}^\infty$ diverges to infinity. 
\end{proof}

\section{Proofs of Section \ref{sub:convergence:1}}\label{app:proof:T1}

For notational simplicity, let us define $L_i(t):= L_i(X(t)), \; \forall~i$, $L(t):=L(X(t))$, and 
$\hat{L}(t): =\max_i L_i(t).$ \; 
For simplicity, throughout this subsection we will ignore the index $t$ whenever possible. That is, we will use $L_i$ (resp. $L$) to denote $L_i(t)$ (resp. $L(t)$). 

\subsection{Preliminary Results}
To begin our analysis, let us first present a few key properties of the MAGENTA. For a given vector $\{\bx_i\}_{i=1}^{N}$, let us define $\bbx :=\frac{1}{N}\b1^T \bx$, and define $\bbv, \bby, \tbbx$ similarly. 
From the $\bx$-update step and $\bv$-update \eqref{eq:yupdate:compact} we have
$\bbx^{r+1} = \bbx^r +  \beta {\bbv}^{r+1}$. 

By using the update rule of $\bv$ in \eqref{eq:vupdate1}, we have
\begin{align}\label{eq:xxtilde:equal}
\bbx^{r+1} = \bbx^r + \beta{\bbv}^{r+1} =\bbx^r + \beta(\tbbx^{r+1}-\bar{\bx}^r) = (1-\beta)\bar{\bx}^r + \beta \tbbx^{r+1}.
\end{align}
Further, recall that from \eqref{eq:yupdate:compact} we can obtain
\begin{align}\label{eq:ybar}
\bby^{r+1} = \bby^r + \overline{\nabla g(\bx^{r+1})}- \overline{\nabla g(\bx^{r})}.
\end{align}
By iterating the above equation, and use the initial condition that $\bby^0 = \overline{\nabla g(\bx^{0})}$, we obtain:
$$\bby^{r} = \overline{\nabla g(\bx^{r})}, \; \forall~r.$$
Note that this property also ensures, when entering the next stage, say $t+1$, we still have: 
$$\bar{\by}(t+1) = 1/N\sum_{i=1}^{N}\nabla f_i(\bx{(t+1)}) = \overline{\nabla g(\bx(t+1))}.$$
We also have the following relations:
\begin{align}\label{eq:barx:difference}
\|\bbx^{r+1}-\bbx^r\|^2 & =\left\|\frac{\beta}{N}\sum_{i=1}^{N}(\tbx^{r+1}_i - \bx^r_i)\right\|^2 \le \frac{1}{N}\sum_{i=1}^{N}\beta^2\|\tbx^{r+1}_i - \bx^r_i\|^2 =  \frac{\beta^2}{N}\|\tbx^{r+1}-\bx^r\|^2
\end{align}
where we have applied the Jensen's inequality. Also we have:
\begin{align}\label{eq:gradient:y}
\|\nabla f(\bbx^r)-\bby^r\|^2 & \stackrel{\eqref{eq:yg}}= \|\nabla f(\bbx^r) - \overline{\nabla g(\bx^r)}\|^2 = \left\|\frac{1}{N}\sum_{i=1}^{N}(\nabla f_i(\bbx^r)-\nabla f_i(\bx^r_i))\right\|^2\nonumber\\
&\le \frac{1}{N}\sum_{i=1}^{N}L^2_i\|\bbx^r-\bx^r_i\|^2 \le \frac{\hat{L}^2}{N}\|\bx^r-\b1 \bbx^r\|^2.
\end{align}
Additionally, note that
\begin{align}\label{eq:bbv}
\|\bbv^{r+1}\|^2 = \left\|\frac{1}{N}\sum_{i=1}^{N}(\tbx_i^{r+1} - \bx^r_i) \right\|^2\le \frac{1}{N}\|\tbx^{r+1}-\bx^r\|^2 = \frac{1}{N}\|\bv^{r+1}\|^2.
\end{align}

\subsection{Useful Lemmas}
Our first lemma characterizes the descent of the algorithm. 
\begin{lemma} \label{lemma:decreasing}
	Suppose that Assumptions 1-2 hold true. Within a fixed stage $t$, we have the following descent estimate for all $r\ge 0$:
	\begin{align*}
	f(\bbx^{r+1}) + h(\bbx^{r+1}) & \le f(\bbx^{r}) + h(\bbx^r)\nonumber\\
	& \quad - \beta\left(\frac{1}{8\alpha N}  -\frac{L\beta}{2N}\right)  \|\bv^{r+1}\|^2 + \frac{\hat{L}^2 \alpha \beta}{N} \|\bx^r-\b1 \bbx^r\|^2+ \frac{2 \alpha \beta }{N} \|\by^r -\b1\bby^r\|^2.
	\end{align*}
\end{lemma}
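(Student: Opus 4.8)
The plan is to obtain the descent by applying the standard descent lemma to the \emph{average} sequence $\{\bbx^r\}$ and then converting the resulting linear term into the three quantities on the right-hand side, using the optimality condition of the proximal step \eqref{eq:vupdate1} together with the auxiliary identities \eqref{eq:xxtilde:equal}--\eqref{eq:bbv}. First I would note that, since $X(t)$ is a ball (hence convex) and each $\bx_i^r,\tbx_i^{r+1}\in X(t)$, the average $\bbx^{r+1}=(1-\beta)\bbx^r+\beta\tbbx^{r+1}$ from \eqref{eq:xxtilde:equal} also lies in $X(t)$; therefore $h(\bbx^{r+1})=h(\bbx^r)$, the indicator terms cancel, and it suffices to bound $f(\bbx^{r+1})$. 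Because $\bbx^r,\bbx^{r+1}\in X(t)$ and $f$ has gradient Lipschitz constant $L=L(t)$ on $X(t)$ (Assumption \ref{ass:problem}), the descent lemma yields
\begin{align*}
f(\bbx^{r+1}) \le f(\bbx^r) + \langle \nabla f(\bbx^r), \bbx^{r+1}-\bbx^r\rangle + \frac{L}{2}\|\bbx^{r+1}-\bbx^r\|^2 .
\end{align*}
Using $\bbx^{r+1}-\bbx^r=\beta\bbv^{r+1}$ and \eqref{eq:barx:difference}, the quadratic term is at most $\tfrac{L\beta^2}{2N}\|\bv^{r+1}\|^2$, which supplies the $\tfrac{L\beta}{2N}$ part of the target coefficient.

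The heart of the argument is the linear term $\beta\langle\nabla f(\bbx^r),\bbv^{r+1}\rangle$, which I would split as $\beta\langle\bby^r,\bbv^{r+1}\rangle+\beta\langle\nabla f(\bbx^r)-\bby^r,\bbv^{r+1}\rangle$. For the second piece, Young's inequality with parameter $d=2\alpha$, combined with the gradient-tracking estimate \eqref{eq:gradient:y} and the bound \eqref{eq:bbv}, produces exactly $\tfrac{\hat{L}^2\alpha\beta}{N}\|\bx^r-\b1\bbx^r\|^2$ plus a controllable $\tfrac{\beta}{4\alpha N}\|\bv^{r+1}\|^2$ remainder. For the first piece I would invoke the variational (optimality) condition of the proximal update \eqref{eq:vupdate1}, namely $\langle\by_i^r+\tfrac{1}{\alpha}\bv_i^{r+1},\,\bx_i-\tbx_i^{r+1}\rangle\ge0$ for all $\bx_i\in X(t)$; taking $\bx_i=\bx_i^r$ gives the node-wise bound $\langle\by_i^r,\bv_i^{r+1}\rangle\le-\tfrac{1}{\alpha}\|\bv_i^{r+1}\|^2$, and hence $\tfrac1N\sum_i\langle\by_i^r,\bv_i^{r+1}\rangle\le-\tfrac{1}{\alpha N}\|\bv^{r+1}\|^2$.

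The delicate step, which I expect to be the main obstacle, is that the descent lemma yields the \emph{average} inner product $\langle\bby^r,\bbv^{r+1}\rangle$, whereas the optimality condition only controls the node-wise sum $\tfrac1N\sum_i\langle\by_i^r,\bv_i^{r+1}\rangle$. I would bridge the two with the covariance identity
\begin{align*}
\langle\bby^r,\bbv^{r+1}\rangle = \frac{1}{N}\sum_{i=1}^N\langle\by_i^r,\bv_i^{r+1}\rangle - \frac{1}{N}\sum_{i=1}^N\langle\by_i^r-\bby^r,\bv_i^{r+1}-\bbv^{r+1}\rangle,
\end{align*}
which follows by expanding $\by_i^r=\bby^r+(\by_i^r-\bby^r)$ and $\bv_i^{r+1}=\bbv^{r+1}+(\bv_i^{r+1}-\bbv^{r+1})$ and using $\sum_i(\by_i^r-\bby^r)=\sum_i(\bv_i^{r+1}-\bbv^{r+1})=0$. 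I would then bound the residual cross term by Young's inequality with parameter $c=4\alpha$, together with $\|\bv^{r+1}-\b1\bbv^{r+1}\|^2\le\|\bv^{r+1}\|^2$; this generates precisely $\tfrac{2\alpha\beta}{N}\|\by^r-\b1\bby^r\|^2$ and a further $\tfrac{\beta}{8\alpha N}\|\bv^{r+1}\|^2$.

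Finally I would collect the three $\|\bv^{r+1}\|^2$ contributions coming from the linear term, namely $\beta\bigl(-\tfrac{1}{\alpha N}+\tfrac{1}{8\alpha N}+\tfrac{1}{4\alpha N}\bigr)=-\tfrac{5\beta}{8\alpha N}$, and add the quadratic piece $\tfrac{L\beta^2}{2N}$, giving a net coefficient $-\tfrac{5\beta}{8\alpha N}+\tfrac{L\beta^2}{2N}$. Since this is bounded above by the claimed $-\beta\bigl(\tfrac{1}{8\alpha N}-\tfrac{L\beta}{2N}\bigr)$, relaxing to the stated coefficient completes the proof. The only bookkeeping care required is to pick the two Young parameters $c=4\alpha$ and $d=2\alpha$ so that the consensus-error coefficients match the target exactly while the $\|\bv^{r+1}\|^2$ coefficient stays sufficiently negative.
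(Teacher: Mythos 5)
Your proposal is correct and follows essentially the same route as the paper's proof: descent lemma on the averaged iterates, splitting the linear term into a prox-optimality piece, a gradient-tracking-error piece bounded via \eqref{eq:gradient:y}, and a $\by$-consensus piece, each handled by Young's inequality with parameters proportional to $\alpha$ (your covariance identity is just an algebraic reorganization of the paper's node-wise cross term $\tfrac{\beta}{N}\sum_i\langle \bby^r-\by_i^r,\bv_i^{r+1}\rangle$, since $\sum_i(\by_i^r-\bby^r)=0$). The only substantive difference is that you invoke the variational inequality of \eqref{eq:vupdate1}, giving $\langle\by_i^r,\bv_i^{r+1}\rangle\le-\tfrac{1}{\alpha}\|\bv_i^{r+1}\|^2$, whereas the paper compares objective values to get the weaker $-\tfrac{1}{2\alpha}$ bound; your resulting coefficient $-\tfrac{5\beta}{8\alpha N}$ is therefore slightly tighter than the stated $-\tfrac{\beta}{8\alpha N}$ before relaxation, but both yield the lemma as claimed.
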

\begin{proof} \label{proof:decreasing_lemma}
	First, by the Lipschitz condition \eqref{eq:f:lip}, we have
	\begin{align}
	f(\bbx^{r+1})& \le f(\bbx^{r}) + \langle \nabla f(\bbx^{r}), \bbx^{r+1}-\bbx^r\rangle + \frac{L}{2}\|\bbx^{r+1}-\bbx^r\|^2\nonumber\\
	f(\bbx^{r+1}) + h(\bbx^{r+1}) & \le f(\bbx^{r}) + h(\bbx^r)+ \underbrace{\langle \nabla f(\bbx^{r}), \bbx^{r+1}-\bbx^r\rangle}_{\rm term A}  + \frac{L}{2}\|\bbx^{r+1}-\bbx^r\|^2 + \underbrace{h(\bbx^{r+1})-h(\bbx^r)}_{\rm term B},\label{eq:descent}
	\end{align}
	where the second inequality is true because we know that $\bx_i^r$'s and $\bx_i^{r+1}$'s are all feasible for $X$, so their averages are also feasible. 
	Let us then analyze term A and term B in \eqref{eq:descent}. We have the following
	\begin{align}\label{eq:A+B}
	{\rm Term A} + {\rm Term B} & \stackrel{\eqref{eq:xxtilde:equal}}   =  h((1-\beta)\bbx^r+\beta\tbbx^{r+1})-h(\bbx^r) + \langle \nabla f(\bbx^{r}), \bbx^{r+1}-\bbx^r\rangle\nonumber\\
	& \le  \frac{\beta}{N}\sum_{i=1}^{N}(h(\tbx^{r+1}_i) - h(\bx_i^r)) +  \frac{\beta}{N}\sum_{i=1}^{N}\langle \nabla f(\bbx^{r}), \tbx_i^{r+1}-\bx_i^r\rangle\nonumber\\
	& =  \frac{\beta}{N}\sum_{i=1}^{N}(h(\tbx^{r+1}_i) - h(\bx_i^r)) +  \frac{\beta}{N}\sum_{i=1}^{N} \langle \by^r_i, \tbx_i^{r+1}-\bx_i^r\rangle \nonumber\\
	& \quad +  \frac{\beta}{N}\sum_{i=1}^{N}\langle \nabla f(\bbx^{r}) -\bby^r, \tbx_i^{r+1}-\bx_i^r\rangle + \frac{\beta}{N}\sum_{i=1}^{N}\langle \bby^r-\by^r_i, \tbx_i^{r+1}-\bx_i^r\rangle
	\end{align}
	where in the first equality we have used \eqref{eq:xxtilde:equal}; in the first inequality we used Jensen's inequality,  the relation \eqref{eq:xxtilde:equal},  as well as the fact that, for indicator functions for a convex set, as long as $\bx_i$'s are all feasible, then we have
	\begin{align*}
	\frac{1}{N}\sum_{i=1}^{N}h(\bx_i) = h(\bbx).
	\end{align*}

	Since  $\tbx^{r+1}_i$ is an optimal solution to the following problem
	\begin{align*}
	\min_{x_i\in X} \langle \by^r_i, \bx_i-\bx^r_i\rangle + \frac{1}{2\alpha(t)}\|\bx_i-\bx^r_i\|^2,
	\end{align*}
	and that $\bx^r_i\in X$ is also feasible for the above problem,  we obtain
	\begin{align*}
	&h(\tbx^{r+1}_i) + \langle \by^r_i, \tbx^{r+1}_i - \bx^r_i\rangle + \frac{1}{2\alpha}\|\tbx^{r+1}_i-\bx^r_i\|^2\le h(\bx^{r}_i). 
	\end{align*}
	Rearranging, we obtain
	\begin{align*}
	&h(\tbx^{r+1}_i)- h(\bx^{r}_i)  + \langle \by^r_i, \tbx^{r+1}_i - \bx_i^r\rangle \le -  \frac{1}{2\alpha}\|\tbx^{r+1}_i-\bx^r_i\|^2.
	\end{align*}
	Applying \eqref{eq:gradient:y} and \eqref{eq:barx:difference}, we obtain
	\begin{align}\label{eq:descent:estimate:term}
	&{\rm Term A} + {\rm Term B} \le -\frac{\beta}{2\alpha N}\|\tbx^{r+1}-\bx^r\|^2 \nonumber\\
	& \quad +  \frac{c\beta}{2}\|\nabla f(\bbx^{r}) -\bby^r\|^2+\frac{\beta}{2Nc} \|\tbx^{r+1}-\bx^r\|^2 + \frac{d\beta}{2N}\|\b1\bby^r-\by^r\|^2 + \frac{\beta}{2Nd}\|\tbx^{r+1}-\bx^r\|^2.
	\end{align}
	So overall, combine the above analysis with \eqref{eq:descent}, \eqref{eq:barx:difference} and \eqref{eq:gradient:y}, we obtain
	\begin{align}\label{eq:descent:estimate}
	f(\bbx^{r+1}) + h(\bbx^{r+1}) & \le f(\bbx^{r}) + h(\bbx^r)- \beta\left(\frac{1}{2\alpha N} -\frac{1}{2cN} - \frac{1}{2Nd}-\frac{L{\beta}}{2N}\right)  \|\bv^{r+1}\|^2 \nonumber\\
	&+ \frac{c \hat{L}^2\beta}{2N} \|\bx^r-\b1 \bbx^r\|+ \frac{d\beta}{2N} \|\by^r -\b1\bby^r\|^2\nonumber\\
	& \le f(\bbx^{r}) + h(\bbx^r)- \beta\left(\frac{1}{8\alpha N}  -\frac{L\beta}{2N}\right)  \|\bv^{r+1}\|^2 \nonumber\\
	& \quad + \frac{\hat{L}^2 \alpha \beta}{N} \|\bx^r-\b1 \bbx^r\|^2+ \frac{{2} \alpha \beta }{N} \|\by^r -\b1\bby^r\|^2,
	\end{align}
	where in the last inequality we let $c= 2\alpha$, and $d= {4} \alpha$. The claim is proved. 
\end{proof}

Next we bound the two ascent terms in \eqref{eq:descent:estimate}. 
\begin{lemma} \label{lemma: bound_terms}
	Suppose that Assumptions 1-2 hold true. Fix an outer iteration $t$, then we have the following descent estimate for all $r\ge 0$:
	\begin{align}\label{eq:poential:partial:2}
	& \|\bx^{r+1}-\b1\bbx^{r+1}\|^2 + \frac{1-(1+\gamma)\eta^2}{32 (1+1/\xi)\hat{L}^2}\|\by^{r+1}-\b1\bby^{r+1}\|^2  \nonumber\\
	& \le   \|\bx^r-\bbx^r\|^2  + \frac{1-(1+\gamma)\eta^2}{32 (1+1/\xi)\hat{L}^2} \|\by^{r}-\b1\bby^{r}\|^2  \nonumber\\
	& - \frac{(1-(1+\gamma)\eta^2)}{2} \|\bx^r-\b1\bbx^r\|^2 +  \beta^2(1+1/\gamma + 1/4)\|\bv^{r+1}\|^2\nonumber\\
	& -  \frac{(1-(1+\gamma)\eta^2)(1-(1+\xi)\eta^2)}{32 (1+1/\xi)\hat{L}^2}\|\by^{r}-\b1\bby^{r}\|^2
	\end{align}
	for any constant $\gamma>0$ and $\xi>0$, and for some $\eta\in (0,1)$, where $ \eta $ is chosen to satisfy that $ (1+\gamma)\eta^2 < 1 $ and $ (1 + \xi)\eta^2 < 1 $.
\end{lemma}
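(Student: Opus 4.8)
The plan is to bound the consensus error $\|\bx^{r+1}-\b1\bbx^{r+1}\|^2$ and the tracking error $\|\by^{r+1}-\b1\bby^{r+1}\|^2$ separately, and then add them with the weight $C:=\frac{1-(1+\gamma)\eta^2}{32(1+1/\xi)\hat{L}^2}$ chosen so that the cross-terms telescope correctly. Throughout I write $\bJ:=\frac{1}{N}\b1\b1^T$, so that $\bx^r-\b1\bbx^r=(\bI-\bJ)\bx^r$ and similarly for $\by$. The facts I rely on are: (i) $\bW\b1=\b1$ and $\bW$ commutes with $\bJ$, hence $(\bI-\bJ)\bW=\bW(\bI-\bJ)$; (ii) the second-largest-eigenvalue condition \eqref{eq:eta} gives the contraction $\|(\bI-\bJ)\bW\bz\|\le\eta\|(\bI-\bJ)\bz\|$; and (iii) the per-stage Lipschitz bound $\|\nabla g(\bx^{r+1})-\nabla g(\bx^r)\|\le\hat{L}\|\bx^{r+1}-\bx^r\|$ from \eqref{eq:fi:lip}.

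First I would handle the two errors. Applying $(\bI-\bJ)$ to the compact $\bx$-update in \eqref{eq:yupdate:compact} and invoking (i)--(ii) gives $\|\bx^{r+1}-\b1\bbx^{r+1}\|^2\le\eta^2\|(\bx^r-\b1\bbx^r)+\beta(\bv^{r+1}-\b1\bbv^{r+1})\|^2$; a Young split with parameter $\gamma$, then $\|\bv^{r+1}-\b1\bbv^{r+1}\|^2\le\|\bv^{r+1}\|^2$ and $\eta^2\le1$ in the ascent term, yields
\begin{align*}
\|\bx^{r+1}-\b1\bbx^{r+1}\|^2\le(1+\gamma)\eta^2\|\bx^r-\b1\bbx^r\|^2+(1+1/\gamma)\beta^2\|\bv^{r+1}\|^2.
\end{align*}
For the tracking error, applying $(\bI-\bJ)$ to the $\by$-update and using (i)--(iii) with a Young split parameter $\xi$ produces a contraction term $(1+\xi)\eta^2\|\by^r-\b1\bby^r\|^2$ plus $(1+1/\xi)\hat{L}^2\|\bx^{r+1}-\bx^r\|^2$. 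I then expand the displacement as $\bx^{r+1}-\bx^r=(\bW-\bI)(\bx^r-\b1\bbx^r)+\beta\bW\bv^{r+1}$, using $(\bW-\bI)\b1=0$, and since $\|\bW-\bI\|\le2$ on mean-zero vectors and $\|\bW\|\le1$, I obtain $\|\bx^{r+1}-\bx^r\|^2\le 8\|\bx^r-\b1\bbx^r\|^2+2\beta^2\|\bv^{r+1}\|^2$.

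Finally I would combine. Multiplying the tracking bound by $C$ and adding it to the consensus bound, the key is that $C(1+1/\xi)\hat{L}^2=\frac{1-(1+\gamma)\eta^2}{32}$, so the $\|\bx^r-\b1\bbx^r\|^2$ feedback from the displacement term equals $\frac{1-(1+\gamma)\eta^2}{4}\|\bx^r-\b1\bbx^r\|^2$. Combined with the consensus descent $-(1-(1+\gamma)\eta^2)\|\bx^r-\b1\bbx^r\|^2$ this leaves $-\tfrac34(1-(1+\gamma)\eta^2)$, which is at most the advertised $-\tfrac12(1-(1+\gamma)\eta^2)$. Likewise the $\bv$-ascent contributed by the tracking step is $\frac{1-(1+\gamma)\eta^2}{16}\beta^2\|\bv^{r+1}\|^2\le\frac14\beta^2\|\bv^{r+1}\|^2$, which together with the $(1+1/\gamma)\beta^2$ from the consensus bound gives the stated $(1+1/\gamma+1/4)\beta^2$ coefficient; and the $\by$-contraction is exactly $-C(1-(1+\xi)\eta^2)\|\by^r-\b1\bby^r\|^2$, matching the final term in the statement.

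The main obstacle is the constant bookkeeping in this last step: I must check that the crude factors $8$ and $2\beta^2$ from the displacement bound, once scaled by $C(1+1/\xi)\hat{L}^2=\frac{1-(1+\gamma)\eta^2}{32}$, fit under the available slack $\frac12(1-(1+\gamma)\eta^2)$ and $\frac14\beta^2$ respectively. Aligning the operator-norm estimate $\|\bW-\bI\|\le2$ and the Young parameters $\gamma,\xi$ with the precise constant $\tfrac{1}{32}$ hard-coded into the potential \eqref{eq:potential} is where the care lies; the rest is a direct application of the contraction (ii) and Young's inequality.
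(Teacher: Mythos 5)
Your proof is correct and follows essentially the same route as the paper's: contraction of $\bW$ on the mean-zero subspace plus Young splits with parameters $\gamma,\xi$ for the consensus and tracking recursions, a displacement expansion of $\bx^{r+1}-\bx^r$, and recombination with the weight $C=\frac{1-(1+\gamma)\eta^2}{32(1+1/\xi)\hat{L}^2}$. Your slightly tighter displacement bound ($8\|\bx^r-\b1\bbx^r\|^2+2\beta^2\|\bv^{r+1}\|^2$ versus the paper's effective $16\|\bx^r-\b1\bbx^r\|^2+8\beta^2\|\bv^{r+1}\|^2$) yields the stronger coefficient $-\tfrac{3}{4}(1-(1+\gamma)\eta^2)$ on the consensus-error term, which you correctly relax to the stated $-\tfrac{1}{2}(1-(1+\gamma)\eta^2)$.
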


\begin{proof} \label{proof:bound_terms}
	First note that from the definition of the weight matrix $\bW$ in \eqref{eq:W}, there must exist a constant $\eta\in (0,1)$ such that
	\begin{align}\label{eq:contraction}
	\|\bW \bx^r- \b1 \bbx^r\|\le \eta \|\bx^r-\b1\bbx^r\|,
	\end{align}
	and similar bounds hold for $\bv$ and $\by$ as well.
	
	From the update rule of $\bx^{r+1}$, and \eqref{eq:xxtilde:equal}, we obtain
	\begin{align}\label{eq:diffx:temp}
	\|\bx^{r+1}-\b1\bbx^{r+1}\|^2 &= \|\bW (\bx^r + \beta \bv^{r+1}) - \b1(\bbx^r + \beta \bbv^{r+1})\|^2\nonumber\\
	& = \|\bW \bx^r - \b1\bbx^r + \beta (\bW \bv^{r+1}-\b1 \bbv^{r+1})\|^2\nonumber\\
	& \le (1+\gamma)\eta^2 \|\bx^r-\bbx^r\|^2 + (1+1/\gamma)\beta^2\|\bv^{r+1}-\b1 \bbv^{r+1}\|^2.
	\end{align}
	Similarly, from the update rule of $\bv$ and \eqref{eq:xxtilde:equal}, we have
	\begin{align}\label{eq:diffv:temp}
	(1+1/\gamma)\beta^2\|\bv^{r+1}-\b1\bbv^{r+1}\|^2 & =(1+1/\gamma)\beta^2 \|\tbx^{r+1}-\bx^r - \b1 (\tbbx^{r+1}-\bbx^r)\|^2\nonumber\\
	& = (1+1/\gamma)\beta^2 \left\|\left(\bI - \frac{\b1\b1^T}{N}\right)(\tbx^{r+1}-\bx^r) \right\|^2\le (1+1/\gamma)\beta^2 \|\bv^{r+1} \|^2.
	\end{align}
	
	From the update rule of $\by$ and \eqref{eq:ybar}, we obtain
	\begin{align*}
	\|\by^{r+1}-\b1\bby^{r+1}\|^2 & = \|\bW\by^r+\nabla g(\bx^{r+1})-\nabla g(\bx^r) - \b1(\bby^r + \overline{\nabla g(\bx^{r+1})}- \overline{\nabla g(\bx^r)})\|^2\nonumber\\
	& \le (1+\xi)\eta^2\|\by^{r}-\b1\bby^{r}\|^2 + (1+1/\xi)\hat{L}^2\|\bx^{r+1}-\bx^r\|^2\nonumber\\
	& = (1+\xi)\eta^2\|\by^{r}-\b1\bby^{r}\|^2 + (1+1/\xi)\hat{L}^2\|\bW (\bx^r+\beta \bv^{r+1}) - \bx^{r}\|^2\nonumber\\
	& = (1+\xi)\eta^2\|\by^{r}-\b1\bby^{r}\|^2 + (1+1/\xi)\hat{L}^2\|\bW \bx^r-\b1 \bbx^r + \b1 \bbx^r- \bx^r + \beta \bW\bv^{r+1}\|^2\nonumber\\
	& \le \|\by^{r}-\b1\bby^{r}\|^2 -(1-(1+\xi)\eta^2)\|\by^{r}-\b1\bby^{r}\|^2 \nonumber\\
	& \quad + 8 (1+1/\xi)\hat{L}^2  \left(2\|\bx^r-\b1 \bbx^r\|^2 + \beta^2\|\bv^{r+1}\|^2\right)
	\end{align*}
	where the last inequality we used the fact that $\|\bW\|\le 1$ and \eqref{eq:contraction}. 
	
	Multiplying both sides the above  by $(1-(1+\gamma)\eta^2)/(32 (1+1/\xi)\hat{L}^2)$, we obtain
	\begin{align}\label{eq:bound:y}
	& \frac{1-(1+\gamma)\eta^2}{32 (1+1/\xi)\hat{L}^2}\|\by^{r+1}-\b1\bby^{r+1}\|^2 \nonumber\\
	& \le \frac{1-(1+\gamma)\eta^2}{32 (1+1/\xi)\hat{L}^2} \|\by^{r}-\b1\bby^{r}\|^2 -\frac{(1-(1+\gamma)\eta^2)(1-(1+\xi)\eta^2)}{32 (1+1/\xi)\hat{L}^2}\|\by^{r}-\b1\bby^{r}\|^2 \nonumber\\
	& \quad + {(1-(1+\gamma)\eta^2) \left( \frac{1}{2} \|\bx^r-\b1 \bbx^r\|^2 +  \frac{\beta^2}{4} \|\bv^{r+1}\|^2 \right)}.
	\end{align}
	Utilizing \eqref{eq:diffx:temp}, \eqref{eq:diffv:temp} and \eqref{eq:bound:y}, we concludes the proof.  
\end{proof}

\subsection{Proof of Theorem \ref{thm:1}}
\begin{proof} \label{proof:thm1}
	{Here, we denote $ F(\bu) : = f(\bu) + h(\bu) $.} First, combine the estimate in Lemma \ref{lemma:decreasing} -- \ref{lemma: bound_terms}, we arrive at
	\begin{align}\label{eq:descent:estimate:final}
	& F(\bbx^{r+1}) + \|\bx^{r+1}-\b1\bbx^{r+1}\|^2 + \frac{1-(1+\gamma)\eta^2}{32 (1+1/\xi)\hat{L}^2(t)}\|\by^{r+1}-\b1\bby^{r+1}\|^2  \nonumber\\
	& \le F(\bbx^{r}) +  \|\bx^r-\bbx^r\|^2  + \frac{1-(1+\gamma)\eta^2}{32 (1+1/\xi)\hat{L}^2(t)} \|\by^{r}-\b1\bby^{r}\|^2\nonumber\\
	& \quad -\beta\left(\frac{1}{8\alpha(t) N}  -\frac{L(t)\beta}{2N}- \beta(1+1/\gamma+1/4)\right)  \|\bv^{r+1}\|^2 - \left(\frac{(1-(1+\gamma)\eta^2)}{2}  -\frac{\hat{L}^2(t) \alpha \beta}{N} \right)\|\bx^r-\b1 \bbx^r\|^2\nonumber\\
	& \quad -\left(\frac{(1-(1+\gamma)\eta^2)(1-(1+\xi)\eta^2)}{32 (1+1/\xi)\hat{L}^2(t)} - \frac{{2} \alpha(t) \beta }{N} \right)\|\by^r -\b1\bby^r\|^2.
	\end{align}

	Note that we have made the dependency on the outer iteration $t$ explicit at this point.
	So clearly, we can make $\alpha(t) = \mathcal{O}(\min\{1/L(t), \; 1/(\hat{L}(t))^2\})$ (or more precisely, according to \eqref{eq:alpha:choice:2}), and by appropriately choosing various constants according to the statement of the theorem, the above becomes
	\begin{align}\label{eq:descent:P}
	P(\bw^{r+1};t) - P(\bw^r;t) &\le - \frac{c_1}{\alpha(t)}\|\bv^{r+1}\|^2 - c_2\|\bx^r-\b1 \bbx^r\|^2 -\frac{c_3}{\hat{L}^2(t)}\|\by^r -\b1\bby^r\|^2
	\end{align}
	where $c_1, c_2, c_3$ are defined below:
	\begin{subequations}
			\begin{align}
		c_1& = \beta\left(\frac{1}{8 N}  -\frac{L(t)\alpha(t) \beta}{2N}- \alpha(t)\beta(1+1/\gamma+1/4)\right)\label{eq:c1:a}\\
		c_2 & = \left(\frac{(1-(1+\gamma)\eta^2)}{2}  -\frac{\hat{L}^2(t) \alpha(t) \beta}{N} \right)\label{eq:c2:a}\\
		c_3 & = \left(\frac{(1-(1+\gamma)\eta^2)(1-(1+\xi)\eta^2)}{32 (1+1/\xi)} - \frac{{2} \alpha(t) \beta \hat{L}^2(t)}{N} \right).\label{eq:c3:a}
		\end{align}
	\end{subequations}

	Let us take $\beta=1/2$ and $\alpha(t)$ satisfies \eqref{eq:alpha:choice:2}. Then we can verify that 
	\begin{align*}
	c_1>\frac{1}{{32}N}, \quad c_2	>\frac{(1-(1+\gamma))\eta^2}{4}>0, \quad c_3 \ge  {\frac{(1-(1+\gamma)\eta^2)(1-(1+\xi)\eta^2)}{{64} (1+1/\xi)}>0.}
	\end{align*}

	Next let us characterize the optimality condition for the proposed algorithm. 
	Using the descent lemma, it is easy to show that after $R(t)$ inner iterations, the following holds
	\begin{align*}
	&\frac{1}{R(t)}\sum_{r=0}^{R(t)-1}\left(\frac{c_1}{\alpha(t)} \|\bv^{r+1}\|^2 + c_2\|\bx^r-\b1 \bbx^r\|^2 + \frac{c_3}{\hat{L}^2(t)}\|\by^r -\b1\bby^r\|^2\right)
	\le \frac{P(\bw^{0};t) - \underline{f}}{R(t)}.
	\end{align*}
	Dividing both sides by $\alpha(t)$, we have  
	\begin{align}
	&\frac{1}{R(t)}\sum_{r=0}^{R(t)-1}\left(\frac{c_1}{\alpha^2(t)} \|\bv^{r+1}\|^2 + \frac{c_2}{\alpha(t)}\|\bx^r-\b1 \bbx^r\|^2 + \frac{c_3}{\alpha(t)\hat{L}^2(t)}\|\by^r -\b1\bby^r\|^2\right)\nonumber\\
	&\le \frac{P(\bw^{0};t) - P(\bw^{R(t)};t)}{R(t)\alpha(t)} =(P(\bw^{0};t)-P(\bw^{R(t)};t))\epsilon  \le (P(\bw^{0};t)-\underline{f})\epsilon\label{eq:intermediate:R}
	\end{align}
	where we have used the assumption that
	$$\alpha(t) R(t) = 1/\epsilon.$$ 
	
	By using the choice of $\alpha(t)$ in \eqref{eq:alpha:choice:2}, and the fact that 
	$$\frac{N(1-(1+\gamma)\eta^2)}{2\hat{L}^2(t)}\ge \frac{N(1-(1+\gamma)\eta^2)(1-(1+\xi)\eta^2)}{{64} (1+1/\xi)\hat{L}^2(t)},$$
	we have $\alpha(t) \le 1$ and 
	\begin{align*}
	\alpha(t) \hat{L}^2(t)&\le \min\bigg\{\frac{\hat{L}^2(t)}{{8} N}\left(\frac{1}{\hat{L}(t)/2N+(1/\gamma+5/4)}\right), \frac{N(1-(1+\gamma)\eta^2)}{2},\nonumber\\
	& \quad  \frac{N(1-(1+\gamma)\eta^2)(1-(1+\xi)\eta^2)}{{64} (1+1/\xi)},\hat{L}^2(t) \bigg\}\\
	& \le \frac{N(1-(1+\gamma)\eta^2)}{2}.
	\end{align*}
	These results imply that 
	\begin{align*}
	&\frac{1}{R(t)}\sum_{r=0}^{R(t)-1}\left({c_1} \|\bv^{r+1}/\alpha(t)\|^2 + {c_2}\|\bx^r-\b1 \bbx^r\|^2 + \frac{2 c_3}{{N(1-(1+\gamma)\eta^2)}}\|\by^r -\b1\bby^r\|^2\right)\nonumber\\
	&\le \frac{P(\bw^{0};t) - \underline{f}}{R(t)\alpha(t)} = (P(\bw^{0};t)-\underline{f})\epsilon.
	\end{align*}
	This completes the claim. 
\end{proof}

\subsection{Proof of Theorem \ref{thm:2}}
\noindent{\bf Proof.} \label{proof: thm2}
The first step is to stitch different stages of the algorithm together, and estimate the descent of  the potential function across different stages. 
First, from Assumption 1 we note that $\hat{L}(t+1)\ge \hat{L}(t)$, and $L(t+1)\ge L(t)$, because from stage $t$ to stage $t+1$, the radius of the feasible set increases. It follows that the potential function \eqref{eq:potential} also decreases as $t$ increases:
\begin{align*}
P(\bw;t+1)\le P(\bw; t), \forall~\bx\in X(t), \forall~\bw.
\end{align*}
Combining the above with the fact that $\bw$ in the last iteration of $t$th round is the same as the initial condition of $(t+1)$th round, we have
\begin{align*}
P(\bw^0;t+1)\le P(\bw^{R(t)}; t).
\end{align*}
By applying \eqref{eq:intermediate:R}, \eqref{eq:c1:a}, and then averaging $T$ stages of total descent, we can obtain{\small
\begin{align*}
&\frac{1}{T}\sum_{t=1}^{T}\frac{1}{R(t)}\sum_{r=0}^{R(t)-1}\frac{1}{32 N} \|\bv^{r+1}(t)/\alpha(t)\|^2 + \frac{(1-(1+\gamma)\eta^2)}{4}\|\bx^r(t)-\b1 \bbx^r(t)\|^2  + \frac{1-(1+\xi)\eta^2}{32N(1+1/\xi)}\|\by^r(t) -\b1\bby^r(t)\|^2\nonumber\\
&\le \frac{(P(\bw^{0};0)-P(\bw^{R(T)}, T))\epsilon}{T}\le \frac{(P(\bw^{0};0)-\underline{f})\epsilon}{T}.
\end{align*}}
This implies that there exists $t\in[0, T]$, such that $\widehat{\mbox{avg-gap}}(t) = \mathcal{O}(\epsilon)$, where $\widehat{\mbox{avg-gap}}(t)$ is defined in \eqref{eq:gap:constrained}. 

Despite the fact that we have obtained an estimate of $\widehat{\mbox{avg-gap}}(t)$, it is not the final result yet. The reason is that, as we have mentioned at the end of Sec. \ref{sub:optimality}, the stationarity gap measure $\widehat{\mbox{avg-gap}}(t)$ is not the same as ${\mbox{avg-gap}}(t)$, unless $\tbx^{r+1}_i$ computed in \eqref{eq:vupdate1} {\it does not} touch the boundary of $X(t)$ for all $i$ and all $r\in[0, R(t)-1]$.  
Fortunately, the following result shows that, there exists an upper bound $T^*$ such that, for some  $t\in [1, T^*+1]$, the iteration generated in stage $t$ will not touch the boundary of $X(t)$. The precise estimate of such $T^*$ depends on various constants of the algorithm, especially on how fast the radius of the set $X(t)$ increases. Different choices and the associated convergence results will be discussed after we present the result below.

For a given stage, suppose that a sufficiently large $R(t)$ iterations are performed, which satisfies $R(t) \alpha(t)=\epsilon$. Let $r*\in [0, R(t)]$ such that either one of the following three cases happens:
\begin{itemize}
	\item {\bf Case 1)} The following event happens:
	$$\mbox{Dist}(\bbx^{r*}, X(t))\le \frac{v^{t}-v^{t-1}}{2}.$$ 
	That is, the average sequence is ``close" to the boundary of the $t$th feasible set;
	\item {\bf Case 2)} The following event happens:
	\begin{align*}
	\mbox{Dist}(\bx^{r*}_i, X(t))\le \frac{v^{t}-v^{t-1}}{4},\; \mbox{for some}\; i\in[N].
	\end{align*}	 
	That is, there exists some node $i\in[N]$ such that the iterate $\bx^{r*}_i$ is close to the boundary. 
	\item {\bf Case 3)} The following event happens:
	\begin{align*}
	\mbox{Dist}(\tbx^{r*}_i, X(t))\le \frac{v^{t}-v^{t-1}}{8},\; \mbox{for some}\; i\in[N].
	\end{align*}	 
\end{itemize}
If for a given stage $t$, and for all its iterations $r\in[0,R(t)]$, none of Case 1 -- Case 3 happens, then it means that none of  $\bx^{r+1}$, $\bbx^{r+1}$ and $\tbx^{r+1}$ gets close to the boundary of the feasible set $X(t)$. It is then clear that in such a stage $t$ the proposed algorithm runs a sequence of {\it unconstrained} distributed updates. The question to be answered is when this kind of stage will appear. 

Let us then analyze, when Case 1, Case 2 or Case 3 happens, the amount of descent of the potential function. 

First, suppose that either Case 1 or Case 2 happens. Note that we have the following lower bound on the sum of squares of the  direction $\bv^{r+1}$:
\begin{align*}
\sum_{r=0}^{R(t)-1}\frac{\beta^2}{N}\|\bv^{r+1}\|^2& = \sum_{r=0}^{R(t)-1}\frac{\beta^2}{N}\|\tbx^{r+1}-\bx^r\|^2 \ge \sum_{r=0}^{r^*-1}\frac{\beta^2}{N}\|\tbx^{r+1}-\bx^r\|^2 \stackrel{\eqref{eq:barx:difference}}\ge \sum_{r=0}^{r^*-1}\|\bbx^{r+1}-\bbx^r\|^2 \nonumber\\
&\stackrel{(i)}\ge \frac{1}{r^*}\left\|\sum_{r=0}^{r^*-1}(\bbx^{r+1}-\bbx^r)\right\|^2\ge  \frac{1}{R(t)}\|\bbx^{r*}- \bbx^0\|^2.
\end{align*}
where in $(i)$ we have applied the Jensen's inequality.
Since we know that at stage $t$, the initial solution is feasible for the stage $t-1$, then we have $\bbx^0\in X(t-1)$. This means that if Case 1 happens, we have
\begin{align*}
\|\bbx^{r*}- \bbx^0\|\ge \frac{v^{t}-v^{t-1}}{4}.
\end{align*}
This implies that 
\begin{align*}
- \sum_{r=0}^{R(t)-1}\frac{c_1}{\alpha(t)}\|\bv^{r+1}\|^2 = - \sum_{r=0}^{R(t)-1}\frac{c_1}{\alpha(t)}\|\tbx^{r+1}-\bx^r\|^2\le -\frac{c_1 N (v^{t}-v^{t-1})^2}{16 R(t)\alpha(t)\beta^2}.
\end{align*}
Second, if Case 1 never happens (the average stays close to $X(t-1)$), but Case 2 happens, this means that $\|\bx^{r*}_i - \bar{\bx}^{r*}\|\ge \frac{1}{4}(v^t-v^{r-1})$ for some $i\in[N]$. 
Then we have
\begin{align*}
-c_2\sum_{r=0}^{R(t)-1}\|\bx^{r+1} - \b1 \bbx^{r+1}\|^2 \le - c_2\|\bx^{r*}-\b1 \bbx^{r*}\|^2\le - \frac{c_2(v^{t}-v^{t-1})^2}{16}.
\end{align*}
Third, if neigher Case 1 nor Case 2 happens, but Case 3 happens, then we have 
\begin{align*}
\|\bv^{r*}\|^2 = \|\tbx^{r*}-\bx^{r*}\|^2 \ge \frac{(v^{t}-v^{t-1})^2}{64}. 
\end{align*}

Therefore, suppose that at stage $t$, at least one of  Case 1 -- Case 3 happen, and let us denote this event as  {[\textsf{Case 1-Case 3]$(t)$}}.  We can apply \eqref{eq:descent:P} and the above estimates and obtain the following (note $c_2<1$)
\begin{align*}
P(\bw^{R};t) - P(\bw^0;t) &\le \sum_{r=0}^{R(t)-1}- \frac{c_1}{\alpha(t)}\|\bv^{r+1}\|^2 - c_2\|\bx^{r+1}-\b1 \bbx^{r+1}\|^2 \\
& \le - \min\left\{\frac{c_1 N}{R(t)\alpha(t)\beta^2}, c_2, 1/4\right\}\frac{(v^{t}-v^{t-1})^2}{16} \nonumber\\
& = - \min\left\{4 N {c_1\epsilon}, c_2, 1/4\right\}\frac{(v^{t}-v^{t-1})^2}{16}
\end{align*}
where the second equality uses the assumption that the inner iteration limit $R(t)$ is chosen such that $1/\epsilon = R(t)\alpha(t)$, and $\beta=1/2$. Further, suppose that {[\textsf{Case 1-Case 3]$(t)$}} does not happen, then according to the descent estimate \eqref{eq:descent:P}, we have $P(\bw^{R};t) - P(\bw^0;t) \le 0$. That is, we know that the potential function will be reduced, but in this case we do not have any explicit estimate of the descent.

Suppose the event  {[\textsf{Case 1-Case 3]$(t)$} happens in $T$ consecutive stages. Then we have
	\begin{align*}
	P(\bw^{R(T)};T) - P(\bw^0;1) & = \sum_{t=1}^{T} P(\bw^{R(t)};t) - P(\bw^0;t)  \nonumber\\
	& \le - \min\left\{{4 N c_1\epsilon}, c_2,1/4\right\}\frac{\frac{1}{T}\sum_{t=1}^{T}(v^{t}-v^{t-1})^2}{16} T\nonumber\\
	& \le - \epsilon\frac{\frac{1}{T}\sum_{t=1}^{T}(v^{t}-v^{t-1})^2}{128} T
	\end{align*}
	where the second inequality holds because the bound of $c_1$ given in \eqref{eq:c1}, and when $\epsilon/8 \le c_2$ and $\epsilon/8\le 1/4$.
	
	This means that the event  {[\textsf{Case 1-Case 3$](t)$}} can happen at most ${{T}}^*$  {\it consecutive} stages, where ${T}^*$ satisfies 
	\begin{align*}
	{T}^*
	\le \frac{1}{\epsilon}\frac{128 ({P}(\bw^0;1)-\underline{f})}{{\frac{1}{T^*}\sum_{t=1}^{T^*}(v^{t}-v^{t-1})^2}}.
	\end{align*}
	The claim is proved. 
	\hfill $\blacksquare$

	\subsection{Proof of Corollary \ref{eq:cor:poly}}
	\begin{proof} \label{proof:cor1}
		Let us suppose that at each stage we increase $v^{t}$ in a linear rate, that is, $v^t= t \times d$ for some constant $d>0$. Then the average growth rate ${\frac{1}{T^*}\sum_{t=1}^{T^*}(v^{t}-v^{t-1})^2}= d^2$. Therefore $T^*$ in \eqref{eq:hatT} can be bounded by the following
		\begin{align*}
		T^* =\left\lceil \frac{128 ({P}(\bw^0;1)-\underline{f})}{\epsilon{d^2}}\right\rceil.
		\end{align*}
		On the other hand, $R(t) = 1/(\epsilon \alpha(t)) = \mathcal{O}(\max\{\hat{L}^2(t), \hat{L}(t)\}/\epsilon)$. Let us define an increasing growth function $\mbox{gr}(v):=\max\{\hat{L}(R(0,v)), \hat{L}^2(R(0,v))\}$, which characterizes how the Lipschitz constants grows with the radius $v$ of the feasible set. Then the total iteration required is in the following order
		\begin{align*}
		\left\lceil \frac{128  ({P}(\bw^0;1)-\underline{f})}{\epsilon{d^2}}\right\rceil \times \frac{1}{\epsilon} \mbox{gr}\left(d \times \left\lceil \frac{128  ({P}(\bw^0;1)-\underline{f})}{\epsilon{d^2}}\right\rceil \right).
		\end{align*}

		If $f_i(\bx)$ is a $Q$th order polynomial in the form of \eqref{eq:poly}, then it is easy to verify that when restricting this function to the feasible set $B(d\times t, \mathbf{0})$, we must have $\hat{L}(t) = \mathcal{O}((d\times t)^{Q-2})$, and $L(t) = \mathcal{O}((d\times t)^{Q-2})$, where the big O notation hides the dependency on the size of the network $N$ and the coefficients $\{\sigma_{ij}\}$. 
		Summarizing the above, by \eqref{eq:total:complexity}, the total number of computation/communication rounds required is in the following order
		\begin{align*}
		\mathcal{O}\left(\frac{1}{\epsilon}\sum_{t=1}^{\frac{1}{\epsilon d^2}} {\hat{L}^2(t)}\right) = \mathcal{O}\left(\frac{1}{\epsilon}\sum_{t=1}^{\frac{1}{\epsilon d^2}} {(d\times t)^{2Q-4}}\right) = \mathcal{O}\left(\frac{1}{\epsilon} \frac{1}{\epsilon d^2} (\frac{1}{\epsilon d})^{2Q-4}\right).
		\end{align*}
		Therefore, suppose that $d$ is a fixed constant (independent of $\epsilon$), then to achieve $\epsilon$-stationary solution of the non-convex decentralized optimization problem \eqref{eq:problem:main} defined in \eqref{eq:gap:unconstrained}, the total number of local computation as well as the total rounds of communication required is at most $\mathcal{O}({1}/{\epsilon^{2Q-2}})$.
		
		Alternatively, one can also optimize the choice of $d$, and let $d=1/\sqrt{\epsilon}$. In this case, the total stage $T$ becomes a constant that is independent of $\epsilon$, and the total computation/communication rounds required is in the following order
		\begin{align*}
		\mathcal{O}\left(\frac{1}{\epsilon}{\left(\frac{1}{\sqrt{\epsilon}}\right)^{2Q-4}}\right) = \mathcal{O}\left(\frac{1}{\epsilon^{Q-1}}\right).
		\end{align*}
		The claim is proved.
	\end{proof}

\end{document}